\documentclass[12pt,a4paper]{amsart}

\usepackage{amsmath}
\usepackage{amsfonts}
\usepackage{latexsym}
\usepackage{graphicx}
\usepackage{amssymb}
\usepackage{amsthm}
\usepackage[margin=2cm]{geometry}
\usepackage{url}
\usepackage{color}
\usepackage{enumerate}
\usepackage[shortlabels]{enumitem} 
\usepackage[small]{caption}
\usepackage{cite}       

\usepackage[T1]{fontenc}     
\usepackage{lmodern}         
\usepackage[utf8]{inputenc}  

\usepackage{accents}    
\usepackage{bbding}     

\usepackage{todonotes}

\usepackage{hyperref}

\usepackage{tikz}
\usetikzlibrary{shapes} 
\usepackage{tikz-cd}      

\newtheorem{definition}{Definition}[section]
\newtheorem{lemma}[definition]{Lemma}
\newtheorem{proposition}[definition]{Proposition}

\newtheorem{corollary}[definition]{Corollary}
\newtheorem{conjecture}[definition]{Conjecture}
\newtheorem{remark}[definition]{Remark}
\newtheorem{theorem}{Theorem}

\newcommand{\N}{\mathbb{N}}
\newcommand{\Z}{\mathbb{Z}}
\newcommand{\Q}{\mathbb{Q}}
\newcommand{\R}{\mathbb{R}}
\newcommand{\generictorus}{{\boldsymbol{T}}}
\newcommand{\torusI}{\mathbb{T}}
\newcommand{\torusII}{\mathbb{T}^2}
\newcommand{\torus}{\torusII}

\newcommand{\A}{\mathcal{A}}
\newcommand{\T}{\mathcal{T}}
\newcommand{\U}{\mathcal{U}}
\newcommand{\Bcal}{\mathcal{B}}
\newcommand{\Dcal}{\mathcal{D}}
\newcommand{\Fcal}{\mathcal{F}}
\newcommand{\Lcal}{\mathcal{L}}
\newcommand{\Pcal}{\mathcal{P}}
\newcommand{\Xcal}{\mathcal{X}}
\newcommand{\Ycal}{\mathcal{Y}}
\newcommand{\Zcal}{\mathcal{Z}}

\newcommand{\zero}{{\boldsymbol{0}}}
\newcommand{\be}{{\boldsymbol{e}}}
\newcommand{\bk}{{\boldsymbol{k}}}
\newcommand{\bn}{{\boldsymbol{n}}}
\newcommand{\bm}{{\boldsymbol{m}}}
\newcommand{\bp}{{\boldsymbol{p}}}
\newcommand{\bv}{{\boldsymbol{v}}}
\newcommand{\bx}{{\boldsymbol{x}}}
\newcommand{\by}{{\boldsymbol{y}}}
\newcommand{\balpha}{{\boldsymbol{\alpha}}}
\newcommand{\bbeta}{{\boldsymbol{\beta}}}

\newcommand{\dist}{\mathrm{dist}}
\newcommand{\card}{\mathrm{card}}
\newcommand{\SFT}{\textsc{SFT}}
\newcommand{\sctop}{\textsc{top}}
\newcommand{\scbottom}{\textsc{bottom}}
\newcommand{\scleft}{\textsc{left}}
\newcommand{\scright}{\textsc{right}}
\newcommand{\scSymbRep}{\textsc{SymbRep}}
\newcommand{\scConfig}{\textsc{Config}}
\newcommand{\interior}[1]{\accentset{\circ}{#1}}
\newcommand{\defn}[1]{\textbf{#1}}

\newcommand\tile[4]{
    \raisebox{-3mm}{
\begin{tikzpicture}[scale=0.9]
\draw (0, 0) -- (1, 0);
\draw (0, 0) -- (0, 1);
\draw (1, 1) -- (1, 0);
\draw (1, 1) -- (0, 1);
\node[rotate=0,font=\footnotesize] at (0.8, 0.5) {#1};
\node[rotate=0,font=\footnotesize] at (0.5, 0.8) {#2};
\node[rotate=0,font=\footnotesize] at (0.2, 0.5) {#3};
\node[rotate=0,font=\footnotesize] at (0.5, 0.2) {#4};
\end{tikzpicture}}}

\def\p{1.61803398874989}   


\keywords{Wang tilings \and aperiodic \and rotation \and Markov partition \and cut and project}
\subjclass[2010]{Primary 37B50; Secondary 52C23, 28D05, 37B05}


\title
[Markov partitions, toral $\mathbb{Z}^2$-rotations, Jeandel-Rao Wang shift, model sets]
{Markov partitions for toral $\mathbb{Z}^2$-rotations\\featuring Jeandel-Rao Wang shift and model sets}

\author[S.~Labb\'e]{S\'ebastien Labb\'e}
\address[S.~Labb\'e]{Univ. Bordeaux, CNRS,  Bordeaux INP, LaBRI, UMR 5800, F-33400, Talence, France}
\email{sebastien.labbe@labri.fr}

\date{\today}

\thanks{The author acknowledges financial support from the Laboratoire
International Franco-Québécois de Recherche en Combinatoire (LIRCO), the Agence
Nationale de la Recherche through the project CODYS (ANR-18-CE40-0007) and the
Horizon  2020 European  Research  Infrastructure  project OpenDreamKit
(676541).}

\begin{document}

\begin{abstract}
    We define a partition $\mathcal{P}_0$ and a $\mathbb{Z}^2$-rotation
    ($\mathbb{Z}^2$-action defined by rotations) on a 2-dimensional torus whose
    associated symbolic dynamical
    system is a minimal proper subshift of the Jeandel-Rao aperiodic Wang shift
    defined by 11 Wang tiles.  We define another partition
    $\mathcal{P}_\mathcal{U}$ and a $\mathbb{Z}^2$-rotation on
    $\mathbb{T}^2$ whose associated symbolic dynamical system is equal to a
    minimal and aperiodic Wang shift defined by 19 Wang tiles.  This proves
    that $\mathcal{P}_\mathcal{U}$ is a Markov partition for the
    $\mathbb{Z}^2$-rotation on $\mathbb{T}^2$.  We prove in both
    cases that the toral $\mathbb{Z}^2$-rotation is the maximal
    equicontinuous factor of the minimal subshifts and that the set of fiber
    cardinalities of the factor map is $\{1,2,8\}$.  The two minimal subshifts
    are uniquely ergodic and are isomorphic as measure-preserving dynamical
    systems to the toral $\mathbb{Z}^2$-rotations.  It provides a
    construction of these Wang shifts as model sets of 4-to-2 cut and project
    schemes. A do-it-yourself puzzle is available in the appendix to illustrate
    the results.

    \smallskip
    \noindent
    \textsc{Résumé}.
    Nous définissons une partition $\mathcal{P}_0$ et une
    $\mathbb{Z}^2$-rotation ($\mathbb{Z}^2$-action définie par des rotations)
    sur un tore 2-dimensionnel dont le système dynamique
    symbolique associé est un sous-décalage propre et minimal du sous-décalage
    apériodique de Jeandel-Rao décrit par un ensemble de 11 tuiles de Wang.
    Nous définissons une autre partition $\mathcal{P}_\mathcal{U}$ et une
    $\mathbb{Z}^2$-rotation sur $\mathbb{T}^2$ dont le système
    dynamique symbolique associé est égal au sous-décalage minimal et
    apériodique défini par un ensemble de 19 tuiles de Wang. On montre que
    $\mathcal{P}_\mathcal{U}$ est une partition de Markov pour la
    $\mathbb{Z}^2$-rotation sur $\mathbb{T}^2$. Nous prouvons dans
    les deux cas que la $\mathbb{Z}^2$-rotation sur le tore est le facteur
    équicontinu maximal des sous-décalages minimaux et que l'ensemble des
    cardinalités des fibres du facteur est $\{1,2,8\}$. Les deux
    sous-décalages minimaux sont uniquement ergodiques et sont isomorphes en
    tant que systèmes dynamiques mesurés à la $\mathbb{Z}^2$-rotation sur le
    tore. Les résultats fournissent une construction de ces sous-décalages de
    Wang en tant qu'ensembles modèles par la méthode de coupe et projection 4
    sur 2. Un puzzle à faire soi-même est disponible en annexe pour illustrer
    les résultats.

\end{abstract}

\maketitle

\setcounter{tocdepth}{1}
\tableofcontents


\section{Introduction}


\tikzstyle{B-beads}=[circle,draw=blue,fill=blue,inner sep=0pt,minimum size=6pt]
\tikzstyle{R-beads}=[diamond,very thick,draw=red,fill=red!5,inner sep=0pt,minimum size=7pt]


We build a biinfinite necklace by placing 
beads at integer positions on the real line:
\begin{center}
\begin{tikzpicture}[scale=2.8]
    \draw (-2.15,0) -- (3.15,0);
    \foreach \a in {-2,...,3}
    \node[circle,draw=black,fill=white,inner sep=0pt,minimum size=6pt,
          label=above:{$\a$}] at (\a,0) {};
\end{tikzpicture}
\end{center}
Beads come in two colors: {\bf\color{red}light red}
\begin{tikzpicture}
\node[R-beads] (a) at (0,0) {};
\end{tikzpicture}
and {\bf\color{blue}dark blue}
\begin{tikzpicture}
\node[B-beads] (a) at (0,0) {};
\end{tikzpicture}.
Given $\alpha>0$,
we would like to place the colored beads 
in
such a way that the relative frequency 
\[
    \frac{\text{number of blue beads in } \{-n,-n+1,\dots,n\}}
         {\text{number of red beads in } \{-n,-n+1,\dots,n\}}
\]
converges to $\alpha$ as $n$ goes to infinity.

A well-known approach is to use coding of rotations on a circle
of circumference $\alpha+1$
whose radius is $\frac{1}{2\pi}(\alpha+1)$.
The coding is given by the
partition of the circle $\R/(\alpha+1)\Z$ into one arc of 
length $\alpha$ associated with dark blue beads
and another arc of 
length~$1$ associated with light red beads.
The two end points of the arcs are associated with red and blue beads respectively
in one way or the other.
Then, we wrap the biinfinite necklace around the circle and each bead is given
the color according to which of the two arcs it falls in.
For example, when $\alpha=\frac{1+\sqrt{5}}{2}$
and if the zero position is assigned to one of the end points,
we get the picture below:
\begin{center}
\begin{tikzpicture}[scale=2.8]
    \def\phi{1.61803398874989}
    \def\angle{360*.381966}
    \def\radius{ 0.416673050492137}

    \draw[very thick,->] (0:2*\radius) arc (0:20:2*\radius);

    \fill[blue!20] (0,0) -- (\radius,0) arc (0:360-\angle:\radius) ;
    \fill[red!20]  (0,0) -- (\radius,0) arc (0:-\angle:\radius) ;

    \draw[thick,blue]       (\radius,0) arc (0:360-\angle:\radius) ;
    \draw[thick,red]        (\radius,0) arc (0:-\angle:\radius) ;

    \draw[thick,blue] (0,0) -- (\radius,0);
    \draw[thick,red]  (0,0) -- (-\angle:\radius) ;

    \node[red] at (-\angle*.5:\radius*.4) {\textbf{red}};
    \node[blue] at (180-\angle*.5:\radius*.4) {\textbf{blue}};

    \foreach \a in {-2,...,8}
    \node at (\a*\angle:\radius*1.3) {$\a$};
    \foreach \a in {-2,0,1,3,4,6,8}
    \node[B-beads] (a) at (\a*\angle:\radius) {};
    \foreach \a in {-1,2,5,7}
    \node[R-beads] (a) at (\a*\angle:\radius) {};
\end{tikzpicture}
\end{center}
Then, we unwrap the biinfinite necklace and we
get an assignment of colored beads to each integer position
such that the relative frequency between blue and red beads is $\alpha$.
Here is what we get after zooming out a little:
\begin{center}
\begin{tikzpicture}[scale=1]
    \draw (-2.3,0) -- (8.3,0);
    \foreach \a in {-2,...,8}
    \node[above=3mm] at (\a,0) {$\a$};
    \foreach \a in {-2,0,1,3,4,6,8}
    \node[B-beads] (a) at (\a,0) {};
    \foreach \a in {-1,2,5,7}
    \node[R-beads] (a) at (\a,0) {};
\end{tikzpicture}
\end{center}
\newcommand\patternI[2]{
\begin{tikzpicture}[scale=#1]
    \draw (-.5,0) -- (0.5,0);
    \node[#2-beads] at (0,0) {};
\end{tikzpicture}}
\newcommand\patternII[3]{
\begin{tikzpicture}[scale=#1]
    \draw (-.5,0) -- (1.5,0);
    \node[#2-beads] at (0,0) {};
    \node[#3-beads] at (1,0) {};
\end{tikzpicture}}
\newcommand\patternIII[4]{
\begin{tikzpicture}[scale=#1]
    \draw (-.5,0) -- (2.5,0);
    \node[#2-beads] at (0,0) {};
    \node[#3-beads] at (1,0) {};
    \node[#4-beads] at (2,0) {};
\end{tikzpicture}}
We observe that this colored necklace has very few distinct patterns.
The patterns of size 0, 1, 2 and 3 that we see in the necklace are shown in the
table below:
\begin{center}
\begin{tabular}{c|c|c|c}
0 & 1 & 2 & 3\\
\hline
\begin{tikzpicture}
    \node[B-beads,white] at (0,0) {};
    \draw (-.15,0) -- (0.15,0);
\end{tikzpicture}
&\patternI{1}{B}  & \patternII{1}{B}{R} &\patternIII{1}{B}{R}{B}\\
&\patternI{1}{R}  & \patternII{1}{R}{B} &\patternIII{1}{R}{B}{B}\\
&                 & \patternII{1}{B}{B} &\patternIII{1}{B}{B}{R}\\
&                 &                     &\patternIII{1}{R}{B}{R}\\
\end{tabular}
\end{center}
We do not get other patterns of size 1, 2 or 3 in the whole biinfinite necklace
since every pattern is uniquely determined by the position of its first bead on
the circle. For each $n\in\N$ there exists a partition of the circle
according to the pattern associated with the position of its first bead:
\begin{center}
\begin{tabular}{ccc}
\begin{tikzpicture}[scale=3]
    \def\phi{1.61803398874989}
    \def\angle{360*.381966}
    \def\radius{ 0.416673050492137}

    \draw[very thick,fill=black!5] (0,0) circle (\radius);

    \draw[very thick] (0,0) -- (\radius,0);
    \draw[very thick] (0,0) -- (-\angle:\radius) ;

    \node[red] at (-\angle*.5:\radius*.5) {\patternI{1}{R}};
    \node[blue] at (180-\angle*.5:\radius*.5) {\patternI{1}{B}};
\end{tikzpicture}
&
\begin{tikzpicture}[scale=3]
    \def\phi{1.61803398874989}
    \def\angle{360*.381966}
    \def\radius{ 0.416673050492137}

    \draw[very thick,fill=black!5] (0,0) circle (\radius);

    \draw[very thick] (0,0) -- (\radius,0);
    \draw[very thick] (0,0) -- (-\angle:\radius) ;
    \draw[very thick] (0,0) -- (-2*\angle:\radius) ;

    \node at (-\angle*.5:\radius*.5)             {\patternII{.4}{R}{B}};
    \node at (-\angle*1.5:\radius*.5)            {\patternII{.4}{B}{R}};
    \node[xshift=1mm] at (180-\angle:\radius*.5) {\patternII{.4}{B}{B}};
\end{tikzpicture}
&
\begin{tikzpicture}[scale=3]
    \def\phi{1.61803398874989}
    \def\angle{360*.381966}
    \def\radius{ 0.416673050492137}

    \draw[very thick,fill=black!5] (0,0) circle (\radius);

    \draw[very thick] (0,0) -- (0:\radius);
    \draw[very thick] (0,0) -- (-\angle:\radius) ;
    \draw[very thick] (0,0) -- (-2*\angle:\radius) ;
    \draw[very thick] (0,0) -- (-3*\angle:\radius) ;

    \node at (-180-2*\angle:\radius*.5)  {\patternIII{.3}{R}{B}{B}};
    \node at (-\angle*1.5:\radius*.5)    {\patternIII{.3}{B}{R}{B}};
    \node[xshift=1mm] at (180-\angle:\radius*.5)     {\patternIII{.3}{B}{B}{R}};
    \node[xshift=1.5mm] at (180-1.5*\angle:\radius*.5) {\patternIII{.3}{R}{B}{R}};
\end{tikzpicture}
\end{tabular}
\end{center}
When $\alpha$ is irrational, one can prove that the partition of the circle for
patterns of size $n$ is made of $n+1$ parts.
The proof follows from the fact that 
the distance between two consecutive beads on the necklace is equal to the
length of one of the original arc (here, the red arc of length 1).
So the partition at a given level is obtained from the previous one by adding 
exactly one separation which increases the number of patterns by 1.
This shows that the colored necklace is a
Sturmian sequence, that is, a sequence whose pattern complexity is $n+1$,
see \cite{MR1905123}.
When $\alpha=\frac{1+\sqrt{5}}{2}$, this is a construction of the biinfinite
Fibonacci word \cite{berstel_mots_1980}.
Note that it is known that sequences having strictly less than $n+1$ patterns
of length $n$, for some $n\in\N$, are eventually periodic 
\cite{MR1507944}.
Therefore, Sturmian sequences are the simplest aperiodic sequences in terms
of pattern (or factor) complexity \cite{MR2759107}.

What Coven and Hedlund proved in \cite{MR0322838}
based on the initial work 
of Morse and Hedlund \cite{MR0000745} 
on Sturmian sequences 
dating from 1940 
is that a biinfinite sequence 
is Sturmian if and only if it is the coding of an irrational
rotation.
Proving that the coding of an irrational rotation is a Sturmian sequence is
the easy part and corresponds to what we did above.
The difficult part is to prove that a Sturmian sequence can be obtained as the coding of an irrational rotation for some starting point.
The proof is
explained nowadays in terms of $S$-adic development of Sturmian sequences,
Rauzy induction of circle rotations,
the continued fraction expansion of real numbers 
and the Ostrowski numeration system \cite{MR1970385}.
Rauzy discovered that the connexion between Sturmian sequences 
and rotations can be generalized to sequences using three symbols \cite{MR667748}
involving a rotation on a 2-dimensional torus $\torus$.
This result was extended recently for almost all rotations on $\torus$
\cite{MR3986918}, see also \cite{thuswaldner_S-adic_2019}.

\subsection{From biinfinite necklaces to $2$-dimensional configurations}

In this work, we want to extend the behavior of Sturmian sequences beyond the
1-dimensional case by considering 
$d$-dimensional configurations.
We say that a \emph{configuration} is an assignment of colored beads
from a finite set $\A$ to every coordinate of the lattice $\Z^d$.
Are there rules describing how to place colored beads in a configuration
in such a way that it encodes
rotations on a higher dimensional torus?

\begin{center}
\begin{tikzpicture}[scale=1]
    \foreach \a in {-2,...,3}
    \draw (\a,-1.4) -- (\a,2.4);
    \foreach \b in {-1,...,2}
    \draw (-2.4,\b) -- (3.4,\b);
    \foreach \a in {-2,...,3}
    \foreach \b in {-1,...,2}
    \node[circle,draw=black,fill=white,inner sep=0pt,minimum size=6pt] 
    at (\a,\b) {};
\end{tikzpicture}
\end{center}
This is related to a question of Adler:
``\textit{how and to what extent can a dynamical
system be represented by a symbolic one}'' \cite{MR1477538}.
The kind of dynamical system we consider are
toral $\Z^d$-rotations, that is, $\Z^d$-actions by rotations on a torus.
When $d=1$, the answer is given in terms of Sturmian sequences and factor complexity.
While Berthé and Vuillon \cite{MR1782038} considered the coding of
$\Z^2$-rotations on the 1-dimensional torus,
we consider $\Z^d$-rotations
on the $d$-dimensional torus.
We show that an answer to the question when $d=2$ can be made in terms
of sets of configurations avoiding a finite set of forbidden patterns known as
\emph{subshifts of finite type} and more precisely in terms of aperiodic
tilings by Wang tiles.
This contrasts with the one-dimensional case, since 
Sturmian sequences can not be described by a finite set of forbidden patterns
(a one-dimensional shift of finite type is nonempty if and only
if it has a periodic point \cite[\S 13.10]{MR1369092}).

\subsection{Jeandel-Rao's aperiodic set of 11 Wang tiles}

The study of aperiodic order \cite{MR857454,MR3136260} gained a lot of interest
since the discovery in 1982 of quasicrystals by Shechtman \cite{PhysRevLett.53.1951}
for which he was awarded the Nobel Prize in Chemistry in 2011.
The first known aperiodic structure was based on the notion of Wang tiles.
\emph{Wang tiles} 
can be represented as
unit square with colored edges,
see Figure~\ref{fig:JR-tile-set}.
\begin{figure}[h]
\begin{center}
    \includegraphics[scale=.9]{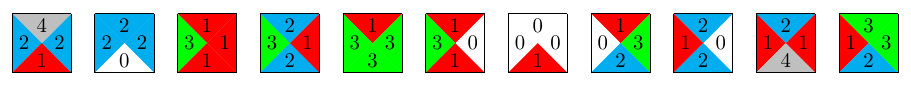}
\end{center}
    \caption{The aperiodic set $\T_0$ of 11 Wang tiles 
    discovered by Jeandel and Rao in 2015
    \cite{jeandel_aperiodic_2015}.
    }
    \label{fig:JR-tile-set}
\end{figure}
Given a finite set
of Wang tiles $\T$, we consider \emph{tilings} of the Euclidean plane using
arbitrarily many translated (but not rotated) copies of the tiles in $\T$. Tiles are placed on the integer lattice
points of the plane with their edges oriented horizontally and vertically.
The tiling is \emph{valid} if every pair of
contiguous edges have the same color.
Deciding if a set of Wang tiles admits a valid tiling of the plane is a difficult
question known as the \emph{domino problem}.
Answering a question of Wang \cite{wang_proving_1961},
Berger proved that the domino problem is undecidable \cite{MR0216954} using a
reduction to the halting problem of Turing machines.
As noticed by Wang, if every set of Wang tiles that admits a valid tiling of the plane would also
admit a periodic tiling, then the domino problem would be decidable.
As a consequence, there exist aperiodic sets of Wang tiles.
A set $\T$ of Wang tiles is called \emph{aperiodic}
if there exists a valid tiling of the plane with the tiles from $\T$
and none of the valid tilings of the plane with the tiles from $\T$ is
invariant under a nonzero translation.

Berger constructed an aperiodic set made of
20426 Wang tiles \cite{MR0216954}, later reduced to 104 
by himself \cite{MR2939561} and further reduced by others \cite{MR0286317,MR0297572}.
Small aperiodic sets of Wang tiles
include Ammann's 16 tiles \cite[p.~595]{MR857454}, Kari's 14 tiles \cite{MR1417578}  and Culik's 13 tiles \cite{MR1417576}.  
The search for the smallest aperiodic set of Wang tiles continued until
Jeandel and Rao proved 
the existence of an aperiodic set $\T_0$ of 11 Wang tiles
and that no set of Wang tiles of cardinality $\leq10$ is aperiodic
\cite{jeandel_aperiodic_2015}. 
Thus their set, shown in Figure~\ref{fig:JR-tile-set},
is a smallest possible set of aperiodic Wang tiles.
An equivalent geometric representation of their
set of 11 tiles is shown in Figure~\ref{fig:JR-tile-set-geometric}.

\begin{figure}[h]
\begin{center}
    \includegraphics[scale=.9]{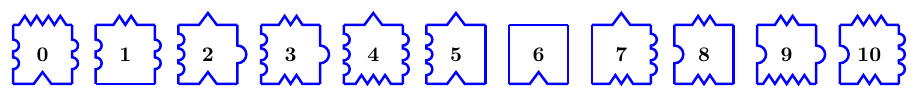}
\end{center}
    \caption{
    Jeandel-Rao tiles can be encoded into a set of equivalent geometrical
    shapes in the sense that every tiling using Jeandel-Rao tiles can be
    transformed into a unique tiling with the corresponding geometrical shapes
    and vice versa.
    }
    \label{fig:JR-tile-set-geometric}
\end{figure}


The aperiodicity of the Jeandel-Rao set of 11 Wang tiles follows from 
the decomposition of tilings as horizontal strips of height 4 or 5.
Using the representation of Wang tiles by transducers, Jeandel and Rao proved
that the language of sequences describing the heights of consecutive horizontal strips in the
decomposition is exactly the language of the Fibonacci word on the alphabet
$\{4,5\}$. Thus it contains the same patterns as in the necklace we constructed above
where $5$ corresponds to the dark blue bead
\begin{tikzpicture}
\node[B-beads] (a) at (0,0) {};
\end{tikzpicture}
and $4$ corresponds to the light red bead
\begin{tikzpicture}
\node[R-beads] (a) at (0,0) {};
\end{tikzpicture}.
This proves the absence of any vertical period in every tiling with Jeandel-Rao tiles.
This is enough to conclude
aperiodicity in all directions, see \cite[Prop.~5.9]{MR3136260}. 
The presence of the Fibonacci word in the vertical structure of Jeandel-Rao tilings is a
first hint that Jeandel-Rao tilings are related to irrational rotations on a
torus.

\subsection{Results}

In this article, we consider Wang tilings from the point of view of symbolic
dynamics \cite{MR2078847}.
While a tiling by a set of Wang tiles $\T$ is a tiling of the plane $\R^2$
whose validity is preserved by translations of $\R^2$ (leading to the notion of \emph{hull}, see
\cite{MR3136260}), we prefer to consider maps $\Z^2\to\T$, that we call
\emph{configurations}, whose validity is preserved by
translations of $\Z^2$. The set $\Omega_\T$ 
of all valid configurations $\Z^2\to\T$
is called a \emph{Wang shift} as
it is closed under the shift $\sigma$ by integer translates.
The passage from Wang shifts ($\Z^2$-actions) to Wang tiling dynamical systems ($\R^2$-action)
can be made with the 2-dimensional suspension of the former as in the classical construction of a
``flow under a function'' in Ergodic Theory, see \cite{MR1355301}.

\begin{figure}[h]
\begin{center}
    \includegraphics[width=0.99\linewidth,trim={2cm 0cm 0cm 0cm},clip]{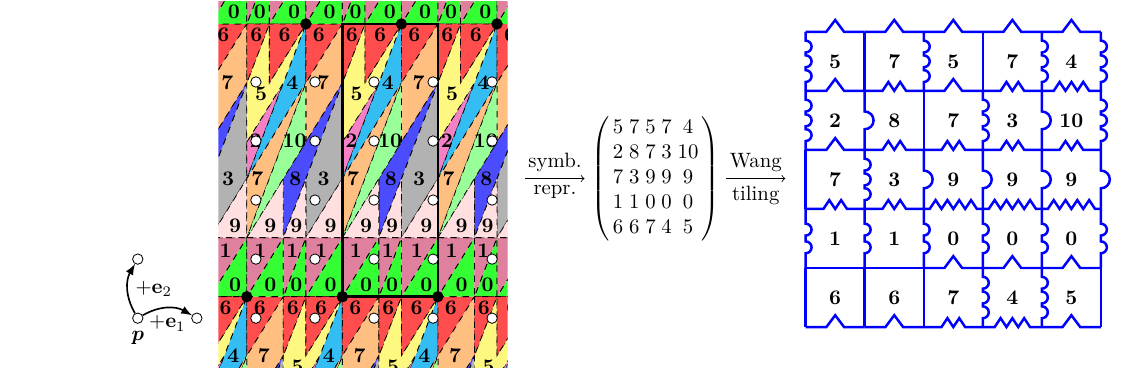}
\end{center}
    \caption{
        On the left, we illustrate the lattice $\Gamma_0=\langle(\varphi,0), (1,\varphi+3)\rangle_\Z$,
        where $\varphi=\frac{1+\sqrt{5}}{2}$,
        with black vertices,
    a rectangular fundamental domain of the flat torus $\R^2/\Gamma_0$ 
    with a black contour and 
    a polygonal partition $\Pcal_0$ of $\R^2/\Gamma_0$ 
    with indices in the set $\{0,1,\dots,10\}$.
    We show that for every starting point $\bp\in\R^2$, the coding
    of the shifted lattice $\bp+\Z^2$ under the polygonal partition 
    yields a configuration $w:\Z^2\to\{0,1,\dots,10\}$
    which is 
    a symbolic representation of $\bp$.
    The configuration $w$ corresponds to a valid tiling of the plane with
    Jeandel-Rao's set of 11 Wang tiles.}
    \label{fig:2d-walk}
\end{figure}

We may now state the main results of this article
together with an illustration.
A partition $\Pcal_0$ of the plane into well-chosen polygons
indexed by integers from the set $\{0,1,2,\dots,10\}$
is shown in Figure~\ref{fig:2d-walk} (left).
The partition $\Pcal_0$
is invariant under the group of translations $\Gamma_0=\langle(\varphi,0),
(1,\varphi+3)\rangle_{\Z}$ where $\varphi=\frac{1+\sqrt{5}}{2}$.
Equivalently, it is a partition of the torus $\R^2/\Gamma_0$ given by a
partition of the rectangular fundamental domain
$[0,\varphi)\times[0,\varphi+3)$.
On the torus $\R^2/\Gamma_0$, we consider
the continuous $\Z^2$-action defined by
$R_0^\bn(\bx):=R_0(\bn,\bx)=\bx + \bn$
for every $\bn=(n_1,n_2)\in\Z^2$
which defines a dynamical system that we denote
$(\R^2/\Gamma_0, \Z^2, R_0)$.
The symbolic dynamical system $\Xcal_{\Pcal_0,R_0}$ corresponding
to $\Pcal_0,R_0$ is
the topological closure of the set of all configurations
$w\in\{0,1,\dots,10\}^{\Z^2}$ obtained from the coding by the partition
$\Pcal_0$ of the orbit of some starting point in $\R^2/\Gamma_0$ by the
$\Z^2$-action of $R_0$ (see Lemma~\ref{lem:closure-of-tilings}).
We say that $\Xcal_{\Pcal_0,R_0}$ is a \emph{subshift} as it is also closed
under the shift $\sigma$ by integer translations.
We state the first theorem below.
The fact that $\Xcal_{\Pcal_0,R_0}\subset\Omega_0$
is illustrated in Figure~\ref{fig:2d-walk}
where $\Omega_0\subset\{0,1,\dots,10\}^{\Z^2}$ is the Jeandel-Rao Wang shift.
The definitions of the terms used in the theorem can be found in
Section~\ref{sec:preliminaries} and Section~\ref{sec:symbolic-representation}.

\begin{theorem}\label{thm:JR-max-equi-factor}
    The Jeandel-Rao Wang shift $\Omega_0$ has the following properties:
\begin{enumerate}[\rm (i)]
\item $\Xcal_{\Pcal_0,R_0}\subsetneq\Omega_0$
is a proper minimal and aperiodic subshift of $\Omega_0$,
\item the partition $\Pcal_0$ gives a symbolic representation of 
$(\R^2/\Gamma_0,\Z^2,R_0)$,
\item the dynamical system $(\R^2/\Gamma_0,\Z^2,R_0)$ is the maximal
    equicontinuous factor of $(\Xcal_{\Pcal_0,R_0},\Z^2,\sigma)$,
\item the set of fiber cardinalities of the factor map
$\Xcal_{\Pcal_0,R_0}\to\R^2/\Gamma_0$ is $\{1,2,8\}$,
\item
    the dynamical system $(\Xcal_{\Pcal_0,R_0},\Z^2,\sigma)$ is strictly
    ergodic and
    the measure-preserving dynamical system $(\Xcal_{\Pcal_0,R_0},\Z^2,\sigma,\nu)$
    is isomorphic 
    to $(\R^2/\Gamma_0,\Z^2,R_0,\lambda)$ 
    where $\nu$ is the unique shift-invariant probability measure on
    $\Xcal_{\Pcal_0,R_0}$
    and $\lambda$ is the Haar measure on $\R^2/\Gamma_0$.
\end{enumerate}
\end{theorem}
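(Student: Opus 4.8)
First I would route all five items through two structures: the symbolic coding machinery that produces a continuous equivariant factor map $\pi\colon\Xcal_{\Pcal_0,R_0}\to\R^2/\Gamma_0$, and a geometric analysis of the partition boundary $\partial\Pcal_0$ that controls the fibers of $\pi$. To establish (ii), note that for every $\bp$ whose orbit $\bp+\Z^2$ avoids $\partial\Pcal_0$ the coding $w=\mathcal{C}(\bp)$ defined by $w(\bn)=i$ iff $\bp+\bn$ lies in the atom of index $i$ is well defined, and $\mathcal{C}$ is continuous at each such $\bp$; since the $\Z^2$-action is minimal on $\R^2/\Gamma_0$ (here I use that $\varphi$ is irrational, so the image of $\Z^2$ is dense) these points are dense, and the closure construction of Lemma~\ref{lem:closure-of-tilings} turns $\mathcal{C}$ into a continuous surjection $\pi$ with $\pi\circ\sigma^{\bn}=R_0^{\bn}\circ\pi$, which is exactly the assertion that $\Pcal_0$ gives a symbolic representation. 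The inclusion $\Xcal_{\Pcal_0,R_0}\subseteq\Omega_0$ is then a finite verification: for each ordered pair of atoms that can occupy horizontally (resp.\ vertically) adjacent positions, i.e.\ whose intersection after the relevant translation has nonempty interior, one checks that the right/left (resp.\ top/bottom) colors of the corresponding Jeandel--Rao tiles agree, and closedness of $\Omega_0$ propagates the inclusion from the dense set of codings to all of $\Xcal_{\Pcal_0,R_0}$.

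For (i), aperiodicity is immediate from equivariance and freeness of the base: if $\sigma^{\bn}w=w$ then $R_0^{\bn}\pi(w)=\pi(w)$, so $\bn\in\Gamma_0\cap\Z^2=\{\zero\}$, again using the irrationality of $\varphi$. Minimality I would prove through uniform recurrence. A pattern on a finite support $F\subset\Z^2$ occurring in a coding corresponds to its center lying in an atom of the refined partition $\bigvee_{\bn\in F}R_0^{-\bn}\Pcal_0$, and such an atom has nonempty interior; because the minimal (hence uniquely ergodic) rotation visits every nonempty open set with bounded gaps, every pattern recurs syndetically in every coding, and a limiting argument extends syndeticity to the boundary configurations, so every orbit is dense. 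The proper inclusion $\Xcal_{\Pcal_0,R_0}\subsetneq\Omega_0$ reflects that $\Omega_0$ is not minimal: it suffices to exhibit one pattern that is legal for the eleven tiles yet absent from the language of $\Xcal_{\Pcal_0,R_0}$, a finite (computer-assisted) check.

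The crux is (iv). Let $B\subset\R^2/\Gamma_0$ be the set of points whose $\Z^2$-orbit meets $\partial\Pcal_0$; off $B$ the coding is unique, so the fiber is a single point, giving the value $1$. Over a point $\bp\in B$ the fiber is the number of distinct configurations obtained as limits of $\mathcal{C}(\bp+\bm{\epsilon})$ as $\bm{\epsilon}\to\zero$ along directions keeping the orbit off $B$; equivalently it is the number of atoms of the limit refinement $\bigvee_{\bn\in\Z^2}R_0^{-\bn}\Pcal_0$ whose closure contains $\bp$. The hard and unavoidable work is to enumerate these local configurations: one lists the finitely many boundary directions appearing in $\Pcal_0$ together with their $\Z^2$-translates, determines along which lines of the torus the orbit can meet $\partial\Pcal_0$, and reads off the number of adjacent atoms. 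Generic boundary points lie on a single such line and split into two half-plane codings, giving the value $2$; the finitely many coincidence points where several boundary lines cross produce, after a careful sector count governed by the geometry of the eleven polygons, exactly eight distinct codings, giving the value $8$, and I would verify that no other local pattern arises. This combinatorial-geometric census is where essentially all the effort lies and is the step I expect to be the main obstacle.

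Items (iii) and (v) then follow formally. The map $\pi$ is almost one-to-one: by (iv) the fiber over every base point outside the meager set $B$ is a singleton, and in the minimal system $\Xcal_{\Pcal_0,R_0}$ these injectivity points form a residual set. As the base is minimal and equicontinuous (a rotation is an isometry), the regional proximal relation $Q$ is contained in $R_\pi=\{(x,x')\colon\pi(x)=\pi(x')\}$ automatically, being the largest equicontinuous factor relation; for the reverse inclusion, given $\pi(x)=\pi(x')$ I would push the pair by $\Z^2$ toward a singleton-fiber base point, where the two images must collide, witnessing regional proximality, so $Q=R_\pi$ and $\R^2/\Gamma_0$ is the maximal equicontinuous factor. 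For (v), any shift-invariant probability measure on $\Xcal_{\Pcal_0,R_0}$ pushes forward under $\pi$ to an $R_0$-invariant measure on $\R^2/\Gamma_0$, which is the Haar measure $\lambda$ by unique ergodicity of the minimal rotation; since $B$ is $\lambda$-null and $\pi$ is injective over its complement, $\pi$ is a measure isomorphism off a null set, forcing uniqueness of $\nu$ and the isomorphism of $(\Xcal_{\Pcal_0,R_0},\Z^2,\sigma,\nu)$ with $(\R^2/\Gamma_0,\Z^2,R_0,\lambda)$. Unique ergodicity together with the minimality from (i) yields strict ergodicity.
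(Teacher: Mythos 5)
Your overall architecture matches the paper's: route everything through the coding/factor map and a geometric analysis of the partition boundary, with (iii) and (v) following formally once (ii) and (iv) are in place. Your minimality argument via syndetic recurrence and your maximal-equicontinuous-factor argument via the regional proximal relation differ cosmetically from the paper (which pushes forward compact invariant sets through the factor map, and invokes the criterion that an equicontinuous factor of a minimal system with one singleton fiber is maximal), but both are sound. The fiber census in (iv) is exactly the paper's, and the paper is no less terse about the $8$-fold coincidence points than you are.

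There is, however, a genuine gap at the foundation, in (ii). You identify ``$\Pcal_0$ gives a symbolic representation'' with the existence of a continuous equivariant surjection obtained by closing up the coding map, and you write that this ``is exactly the assertion'' to be proved. It is not. The definition requires that for \emph{every} configuration $w\in\Xcal_{\Pcal_0,R_0}$ the intersection $\bigcap_{n}\overline{D}_n(w)$ of the closed coding regions is a \emph{single} point of the torus; this separation property is precisely what makes the map $\Xcal_{\Pcal_0,R_0}\to\R^2/\Gamma_0$ well defined, and it is the input to everything you do afterwards (aperiodicity via equivariance, the fiber count, the measure isomorphism). It is not automatic: in the paper's Example~3 the coding map is onto the Wang shift and perfectly continuous where defined, yet the partition does \emph{not} give a symbolic representation because every point of the torus receives the same configuration. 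So you must actually prove that the iterated refinements $\bigvee_{\bn}R_0^{-\bn}\Pcal_0$ separate points. The paper does this (Lemma~\ref{lem:symbolic-representation}) by exhibiting an atom, namely $P_{t_{10}}$, that is invariant only under the trivial translation of the torus: for $\bx\neq\by$ the set $\overline{P_{t_{10}}}\setminus(\overline{P_{t_{10}}}-(\by-\bx))$ contains an open set, minimality of $R_0$ places some $R_0^{\bk}\bx$ inside it, and this forces $\by\notin\bigcap_n\overline{D}_n(w)$. Some verification of this kind is unavoidable, and without it your factor map $\pi$ is not defined and items (i) (aperiodicity), (iii), (iv) and (v) do not get off the ground.
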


A larger picture of the partition $\Pcal_0$ is illustrated in the appendix
together with a DIY puzzle
allowing hand made construction of configurations in
$\Xcal_{\Pcal_0,R_0}\subset\Omega_0$
as the symbolic representation 
of starting points in $\R^2/\Gamma_0$.

Theorem~\ref{thm:JR-max-equi-factor}
corresponds to the easy direction in the proof of Morse-Hedlund's
theorem, namely that codings of irrational rotations have pattern complexity $n+1$.
Proving the converse, i.e., that almost every (for some shift-invariant
probability measure) configuration in the Jeandel-Rao Wang shift
is obtained as the coding of the shifted lattice $\bp+\Z^2$ for some unique 
$\bp\in\R^2/\Gamma_0$ is harder.
This has lead to split the proof of the
converse \cite{labbe_induction_2019}.

Note that a similar result was obtained for Penrose tilings
\cite[Theorem A]{MR1355301}.  
In particular, it was shown that the set of fiber cardinalities for Penrose
tilings (with the action of $\R^2$) is $\{1,2,10\}$.
In \cite{MR3041974}, it was proved that the set of fiber cardinalities
is $\{1,2,6,12\}$ for a minimal hull among Taylor-Socolar hexagonal tilings.
We show 
in Lemma~\ref{lem:fiber-card-invariant}
that the set of fiber cardinalities of the maximal equicontinuous factor of a
minimal dynamical system is invariant under topological conjugacy.
Therefore, the Jeandel-Rao tilings, the Penrose tilings and the Taylor-Socolar tilings
are inherently different.

We also provide a stronger result on another example. We define a polygonal
partition $\Pcal_\U$ of the torus $\torus=\R^2/\Z^2$ into 19 atoms.
We consider
the continuous $\Z^2$-action $R_\U$ defined
on $\torus$ by
$R_\U^\bn(\bx)=\bx + \varphi^{-2}\bn$
for every $\bn\in\Z^2$ where $\varphi=\frac{1+\sqrt{5}}{2}$.
It defines a dynamical system that we denote
$(\torus, \Z^2, R_\U)$.
We prove that the symbolic dynamical system $\Xcal_{\Pcal_\U,R_\U}$
corresponding to $\Pcal_\U,R_\U$ is \emph{equal} to the Wang shift
$\Omega_\U$
where $\U$ is the set of 19
Wang tiles introduced by the author in \cite{MR3978536}
and discovered from the study of the Jeandel-Rao
Wang shift \cite{labbe_substitutive_2018_with_doi}.

\begin{theorem}\label{thm:OmegaU-partition}
    The Wang shift $\Omega_\U$ has the following properties:
\begin{enumerate}[\rm (i)]
\item the subshift $\Xcal_{\Pcal_\U,R_\U}$ is minimal, aperiodic and is equal to $\Omega_\U$,
\item $\Pcal_\U$ is a Markov partition for the dynamical system $(\torus,\Z^2,R_\U)$,
\item $(\torus,\Z^2,R_\U)$ is the maximal equicontinuous
factor of $(\Omega_\U,\Z^2,\sigma)$,
\item the set of fiber cardinalities of the factor map
$\Omega_\U\to\torus$ is $\{1,2,8\}$,
\item the dynamical system $(\Omega_\U,\Z^2,\sigma)$ is strictly
    ergodic and
    the measure-preserving dynamical system $(\Omega_\U,\Z^2,\sigma,\nu)$
    is isomorphic 
    to $(\torus,\Z^2,R_\U,\lambda)$ 
    where $\nu$ is the unique shift-invariant probability measure on
    $\Omega_\U$ and $\lambda$ is the Haar measure on $\torus$.
\end{enumerate}
\end{theorem}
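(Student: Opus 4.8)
The plan is to treat parts (i) and (ii) together, since both reduce to the single equality $\Xcal_{\Pcal_\U,R_\U}=\Omega_\U$, and then to deduce (iii)--(v) from the structure of the factor map $\pi\colon\Omega_\U\to\torus$, exactly as one does for an almost automorphic extension of an equicontinuous minimal system. For the inclusion $\Xcal_{\Pcal_\U,R_\U}\subseteq\Omega_\U$ it suffices to check that the coding of every orbit respects the adjacency rules of the $19$ tiles: labelling the atoms $P_0,\dots,P_{18}$ by the tiles $u_0,\dots,u_{18}\in\U$, I would verify that whenever $\bx\in P_i$ and $\bx+\varphi^{-2}\be_1\in P_j$ the right colour of $u_i$ equals the left colour of $u_j$, and likewise in the vertical direction using $\be_2$ and the top/bottom colours. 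This is a finite verification over the translated atoms of $\Pcal_\U$, and since validity of a configuration is a closed condition it passes to the closure, giving $\Xcal_{\Pcal_\U,R_\U}\subseteq\Omega_\U$. For the reverse inclusion I would invoke that $\Omega_\U$ is minimal, which is known from its self-similar (primitive substitutive) structure \cite{MR3978536,labbe_substitutive_2018_with_doi}. As $\Xcal_{\Pcal_\U,R_\U}$ is a nonempty, closed, shift-invariant subset of the minimal system $\Omega_\U$, it must equal $\Omega_\U$; minimality and aperiodicity of $\Xcal_{\Pcal_\U,R_\U}$ are then inherited, proving (i). Part (ii) is then immediate: $\Omega_\U$ is a Wang shift, hence an \SFT{}, so $\Pcal_\U$ produces an \SFT{} and is a Markov partition for $(\torus,\Z^2,R_\U)$.

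The action $(\torus,\Z^2,R_\U)$ is equicontinuous, and it is minimal and uniquely ergodic because $\varphi^{-2}$ is irrational, so the orbit of every point under $\bx\mapsto\bx+\varphi^{-2}\bn$ is dense. The heart of (iii)--(iv) is a geometric analysis of $\pi$: a point $\bp\in\torus$ has a non-singleton fibre precisely when its $R_\U$-orbit meets the boundary $\partial\Pcal_\U$, where the coding becomes ambiguous. Since $\partial\Pcal_\U$ is a finite union of segments and the orbit is countable, the set of such $\bp$ is a countable union of translates of $\partial\Pcal_\U$, hence meager and $\lambda$-null; thus the set where $\pi$ is one-to-one is residual and of full measure. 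In particular $\pi$ is an almost one-to-one extension of the equicontinuous minimal system $\torus$, and by the standard characterization of almost automorphic systems this forces $(\torus,\Z^2,R_\U)$ to be the maximal equicontinuous factor of $(\Omega_\U,\Z^2,\sigma)$, giving (iii). For (iv) I would read off the possible fibre sizes from the local geometry of $\Pcal_\U$: the generic (interior) value is $1$, while the remaining values, computed directly from the arrangement of atoms and the arithmetic of $\varphi^{-2}$, are $2$ and $8$, so the set of fibre cardinalities is $\{1,2,8\}$, in agreement with Theorem~\ref{thm:JR-max-equi-factor} and consistent with Lemma~\ref{lem:fiber-card-invariant}.

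Part (v) is then a soft consequence. Unique ergodicity of $\Omega_\U$ follows from unique ergodicity of $\torus$ together with the almost one-to-one property: any shift-invariant measure pushes forward to the Haar measure $\lambda$, and since $\pi$ is injective over a set of full $\lambda$-measure its disintegration is forced, so the invariant measure $\nu$ is unique; combined with minimality from (i) the system is strictly ergodic. Finally, because $\pi$ is a measurable bijection over the full-measure set of singleton fibres and $\pi_*\nu=\lambda$, it is a measure-preserving isomorphism between $(\Omega_\U,\Z^2,\sigma,\nu)$ and $(\torus,\Z^2,R_\U,\lambda)$, establishing (v).

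The hard part will not be the deductions (iii)--(v), which are formal once almost automorphy is in hand, but two explicit computations tied to the concrete partition: the finite adjacency check giving $\Xcal_{\Pcal_\U,R_\U}\subseteq\Omega_\U$, and the boundary–orbit analysis pinning the fibre cardinalities to exactly $\{1,2,8\}$. The reverse inclusion $\Omega_\U\subseteq\Xcal_{\Pcal_\U,R_\U}$ rests entirely on importing minimality of $\Omega_\U$ from its self-similar structure; without that input, proving this inclusion directly would be the genuine obstacle, playing the role of the difficult direction in the Morse--Hedlund theorem.
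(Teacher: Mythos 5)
Your overall architecture tracks the paper's closely: the inclusion $\Xcal_{\Pcal_\U,R_\U}\subseteq\Omega_\U$ by the finite adjacency check (this is exactly Proposition~\ref{prop:is_wang_tiling}), the reverse inclusion by importing minimality of $\Omega_\U$ from \cite{MR3978536} and using that a nonempty subshift of a minimal subshift equals it, and the deduction of (iii)--(v) from an almost one-to-one factor map onto the equicontinuous minimal rotation. There is, however, one genuine gap, and it sits precisely where you declare part (ii) to be ``immediate''. Definition~\ref{def:Markov} has \emph{two} clauses: $\Xcal_{\Pcal_\U,R_\U}$ must be a shift of finite type, \emph{and} $\Pcal_\U$ must give a symbolic representation of $(\torus,\Z^2,R_\U)$, meaning that for every $w\in\Xcal_{\Pcal_\U,R_\U}$ the intersection $\bigcap_{n\geq 0}\overline{D}_n(w)$ is a single point. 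You verify only the first clause. The second is not automatic --- the paper's Example~3 exhibits a refined partition with $\Xcal_{\Pcal,R}=\Omega_\T$ a shift of finite type for which the symbolic representation property fails --- and it is the standing hypothesis of every tool you invoke in (iii)--(v): the well-definedness of the factor map $\Omega_\U\to\torus$ (Proposition~\ref{prop:factor-map}), the maximality of the equicontinuous factor (Corollary~\ref{cor:max-equicontinuous-factor}), and the measure-theoretic isomorphism (Proposition~\ref{prop:measurable-conjugacy}) all assume that $\Pcal$ gives a symbolic representation. Without it, a configuration could be a limit of codings of points accumulating at two distinct points of $\torus$, and the map $\pi$ on which your analysis of fiber cardinalities rests would not be single-valued.

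The paper closes this gap with Lemma~\ref{lem:symbolic-representation}: since $(\torus,\Z^2,R_\U)$ is minimal and the atom $P_{u_0}$ of $\Pcal_\U$ is invariant only under the trivial translation of $\torus$, the partition gives a symbolic representation. It is a one-line verification once the lemma is in hand, but it must be stated. A minor point in your favour: your route to part (i) is slightly more economical than the paper's, since you obtain minimality and aperiodicity of $\Xcal_{\Pcal_\U,R_\U}$ directly from the equality with $\Omega_\U$ and the known properties of $\Omega_\U$ rather than via Lemma~\ref{lem:minimal-aperiodic}, and that part genuinely does not require the symbolic representation property; everything from (ii) onward does.
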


Since a Wang shift is a shift of finite type, 
the equality $\Xcal_{\Pcal_\U,R_\U}=\Omega_\U$ implies that
$\Pcal_\U$ is a Markov partition (see Definition~\ref{def:Markov}) for the
$\Z^2$-action $R_\U$.
Note that Markov partitions ``\textit{remained abstract objects for a long time}''
\cite[\S 7.1]{MR1970385}.
Explicit constructions of Markov partitions were originally given for
hyperbolic automorphisms of the torus, see
\cite{MR0257315,MR1477538}.
More recent references relate Markov partitions with 
arithmetics \cite{MR1619562,Kenyon-HDR},
algebraic numbers 
\cite{MR2806687} and numeration systems \cite{MR1615950}.

The link between aperiodic order and cut and project schemes (Definition~\ref{def:CPS}) 
and model sets (Definition~\ref{def:model-set}) is not new.
In one dimension,
the fact that Sturmian sequences 
are codings of rotations
implies that
they can be seen as model
sets of cut and project schemes, see
\cite{MR2192233,MR3136260}.
Since the contribution of N.~G.~de~Bruijn \cite{MR609465}, we know
that Penrose tilings are obtained as the projection of discrete surfaces in a 5-dimensional space
onto a 2-dimensional plane.
Other typical examples include Ammann-Beenker tilings 
\cite{bedaride_ammannbeenker_2013}
and Taylor-Socolar
aperiodic hexagonal tilings for which Lee and Moody gave a description in
terms of model sets \cite{MR3041974}.
Likewise, a consequence of 
Theorem~\ref{thm:JR-max-equi-factor} and
Theorem~\ref{thm:OmegaU-partition}
is a description of the two aperiodic Wang
shifts $\Omega_0$ and $\Omega_\U$ with cut and project schemes.
More precisely, 
we show that the occurrences of patterns in the two Wang shifts
are regular model sets.
Definitions of generic and singular configurations is in Section~\ref{sec:one-to-one-map}
and definitions of regular, generic and singular models sets can be found in
Section~\ref{sec:CPS}.

\begin{theorem}\label{thm:jeandel-rao-model-set}
    There exists a cut and project scheme such that
    for every Jeandel-Rao configuration $w\in
    \Xcal_{\Pcal_0,R_0}\subsetneq\Omega_0$, the set $Q\subseteq\Z^2$ of
    occurrences of a pattern in $w$
    is a regular model set.
    If $w$ is a generic (resp.~singular) configuration,
    then $Q$ is a generic (resp.~singular) model set.
\end{theorem}

We prove the same result for the Wang shift $\Omega_\U$
(see Theorem~\ref{thm:OmegaU-is-model-set}).
As opposed to the Kari-Culik Wang shift,
for which a minimal subsystem
is related to a dynamical system on
$p$-adic numbers \cite{MR3668002},
windows used for the cut and project schemes are Euclidean.


It was shown that the action of $\R^2$ by translation on the set of Penrose tilings
is an almost one-to-one extension of a minimal $\R^2$-action by rotations on $\torusI^4$
\cite{MR1355301} (the fact that it is $\torusI^4$ instead of $\torusI^2$ is
related to the consideration of tilings instead of shifts).
This result can also be seen as a higher dimensional generalization
of the Sturmian dynamical systems. 
Note that a shift of finite type or Wang shift can be explicitely constructed
from the Penrose tiling dynamical system, as shown in \cite{MR1971208}.
This calls for a common point of view
including Jeandel-Rao aperiodic tilings,
Penrose tilings and others. For example, we do not know if Penrose tilings
can be seen as a symbolic dynamical system associated to a Markov partition
like it is the case for the Jeandel-Rao Wang shift.
It is possible that such Markov partitions exist
only for tilings associated to some algebraic numbers, see
\cite{bedaride_canonical_2020}.


\subsection{Structure of the article}
This article is divided into three parts. 
In the first part, we 
construct symbolic representations of toral $\Z^2$-rotations and a factor map which provides an isomorphism between symbolic
dynamical systems and toral $\Z^2$-rotations.
In the second part, we
construct sets of Wang tiles and Wang shifts as the coding
of $\Z^2$-rotations on the 2-torus.
We illustrate the method on two examples including Jeandel-Rao aperiodic Wang
shift.
In the third part, we express occurrences of patterns in these Wang shifts in terms of
model sets of cut and project schemes.
In the appendix, we propose a do-it-yourself puzzle to explain the
construction of valid configurations in the Jeandel-Rao Wang shift as the coding
of $\Z^2$-rotations on the 2-torus.

\subsection*{Acknowledgements}

I am thankful to Jarkko Kari and Michaël Rao for their presentations at
the meeting \emph{Combinatorics on words and tilings} (CRM, Montréal, April
2017). I am grateful to Michaël Rao who provided me a text file of
$986\times2583$ characters in the alphabet $\{\texttt{a},\texttt{b}\}$
describing a rectangular pattern with Jeandel-Rao tiles.  This allowed me to
start working on aperiodic tilings.


I want to thank
Michael Baake,
Sebasti\'an Barbieri,
Vincent Delecroix,
Franz G\"ahler,
Ilya Galanov,
Maik Gröger,
Jeong-Yup Lee,
Jean-François Marckert,
Samuel Petite
and
Mathieu Sablik
for profitable discussions during the preparation of this article which
allowed me to improve my knowledge on tilings, measure theory and dynamical
systems and write the results in terms of existing concepts.
I am very thankful to the anonymous referees for their in-depth reading 
and valuable comments which lead to a great improvement of the presentation.


\part{Symbolic dynamics of toral $\Z^2$-rotations}\label{part:1}

This part is divided into 5 sections.
After introducing dynamical systems and subshifts,
we define the symbolic representations of toral $\Z^2$-rotations from a
topological partition of the 2-torus.
We introduce a one-to-one map from the 2-torus to symbolic representations
and a factor map from symbolic representations to the 2-torus.
We show that the factor map
provides an isomorphism between symbolic dynamical systems and toral
$\Z^2$-rotations.

\section{Dynamical systems, maximal equicontinuous factors and subshifts}
\label{sec:preliminaries}


In this section, we introduce dynamical systems, maximal equicontinuous
factors, set of fiber cardinalities of a factor map, subshifts and shifts of
finite type.
We let $\Z=\{\dots,-1,0,1,2,\dots\}$ denote the integers and
$\N=\{0,1,2,\dots\}$ be the nonnegative integers.

\subsection{Topological dynamical systems}


Most of the notions introduced here can be found in \cite{MR648108}.
A \defn{dynamical system} is
a triple $(X,G,T)$, where $X$ is a topological space, $G$ is a topological
group and $T$ is a continuous function $G\times X\to X$ defining a left action
of $G$ on $X$:
if $x\in X$, $e$ is the identity element of $G$ and $g,h\in G$, then using
additive notation for the operation in $G$ we have $T(e,x)=x$
and $T(g+h,x)=T(g,T(h,x))$.
In other words, if one denotes the transformation $x\mapsto T(g,x)$
by $T^g$, then $T^{g+h}=T^g T^h$.
In this work, we consider the Abelian group $G=\Z\times\Z$.

If $Y\subset X$, let $\overline{Y}$ denote the topological closure of $Y$ and
let $T(Y):=\cup_{g\in G}T^g(Y)$ denote the $T$-closure of $Y$.
A subset $Y\subset X$ is \defn{$T$-invariant} if $T(Y)=Y$.
A dynamical system $(X,G,T)$ is called \defn{minimal} if $X$ does
not contain any nonempty, proper, closed $T$-invariant subset.
The left action of $G$ on $X$ is \defn{free}
if $g=e$ whenever there exists $x\in X$ such that $T^g(x)=x$.


Let $(X,G,T)$ and $(Y,G,S)$ be two dynamical systems with
the same topological group $G$.
A \defn{homomorphism} $\theta:(X,G,T)\to(Y,G,S)$ is a continuous
function $\theta:X\to Y$ satisfying the commuting property
that $T^g\circ\theta=\theta\circ S^g$ for every $g\in G$.
A homomorphism $\theta:(X,G,T)\to(Y,G,S)$ is called an \defn{embedding}
if it is one-to-one, a \defn{factor map} if it is onto, and a \defn{topological
conjugacy} if it is both one-to-one and onto and its inverse map is continuous.
If $\theta:(X,G,T)\to(Y,G,S)$ is a factor map,
then $(Y,G,S)$ is called a \defn{factor} of $(X,G,T)$
and $(X,G,T)$ is called an \defn{extension} of $(Y,G,S)$.
Two subshifts are \defn{topologically conjugate} if there is a topological
conjugacy between them.


The set of all $T$-invariant probability measures of a dynamical
system $(X,G,T)$ is denoted by $\mathcal{M}^T(X)$.
An invariant probability measure on $X$ is called \defn{ergodic} if for every set
$B\in\Bcal$ such that $T^{g}(B)=B$ for all $g\in G$, we have that $B$ has either
zero or full measure. A
dynamical system $(X,G,T)$ is \defn{uniquely ergodic}
if it has only one invariant probability measure, i.e., $|\mathcal{M}^T(X)|=1$.
A dynamical system $(X,G,T)$ is said \defn{strictly ergodic}
if it is uniquely ergodic and minimal.


A \defn{measure-preserving dynamical system} is defined as a system
$(X,G,T,\mu,\Bcal)$, where $\mu$ is a probability measure defined on the
Borel $\sigma$-algebra $\Bcal$ of subsets of $X$, and $T^g:X\to X$ is a measurable map
which preserves the measure $\mu$ for all $g\in G$, that is,
$\mu(T^g(B))=\mu(B)$ for all $B\in\Bcal$. The measure $\mu$ is said to be
\defn{$T$-invariant}.
In what follows, 
$\Bcal$ is always the Borel
$\sigma$-algebra of subsets of $X$, so
we omit $\Bcal$ and write $(X,G,T,\mu)$ when it is clear from the context.

Let $(X,G,T,\mu,\Bcal)$
and $(X',G,T',\mu',\Bcal')$ be two measure-preserving dynamical systems.
We say that the two systems are
\defn{isomorphic} 
if there exist measurable sets $X_0\subset X$ and $X'_0\subset X'$
of full measure (i.e., $\mu(X\setminus X_0)=0$
and $\mu'(X'\setminus X'_0)=0$) with
$T^g(X_0)\subset X_0$, $T'^g(X'_0)\subset X'_0$ for all $g\in G$
and there exists a map $\phi:X_0\to X'_0$, called an \defn{isomorphism},
that is one-to-one and onto and such that for all $A\in\Bcal'(X'_0)$,
\begin{itemize}
    \item $\phi^{-1}(A)\in\Bcal(X_0)$,
    \item $\mu(\phi^{-1}(A))=\mu'(A)$, and
    \item $\phi\circ T^g(x)=T'^g\circ\phi(x)$ for all $x\in X_0$ and $g\in G$.
\end{itemize}
The role of the set $X_0$ is to make precise the fact that the properties of
the isomorphism need to hold only on a set of full measure.

\subsection{Maximal equicontinuous factor}
In this section, we provide the definition of maximal continuous factor and of
related notions. We recall a sufficient condition for a factor to be the
maximal equicontinuous factor and we prove a result on the set of fiber cardinalities
of the maximal equicontinuous factor of a minimal dynamical system.

A metrizable dynamical system $(X,G,T)$ is called \defn{equicontinuous} if
the family of homeomorphisms $\{T^g\}_{g\in G}$ is equicontinuous, i.e., if for
all $\varepsilon>0$ there exists $\delta>0$ such that
\[
    \dist(T^g(x), T^g(y)) < \varepsilon
\]
for all $g\in G$ and all $x,y\in X$ with $\dist(x,y)<\delta$.
According to a well-known theorem~\cite[Theorem 3.2]{MR3381481},
equicontinuous
minimal systems defined by the action of an Abelian group
are rotations on groups.

We say that $\theta:(X,G,T)\to(Y,G,S)$ is an \defn{equicontinuous factor} if
$\theta$ is a factor map and $(Y,G,S)$ is equicontinuous.
We say that $(X_{\rm max}, G, T_{\rm max})$ is the \defn{maximal equicontinuous
factor} of $(X,G,T)$ if 
there exists an equicontinuous factor
$\pi_{\rm max}:(X,G,T)\to(X_{\rm max}, G, T_{\rm max})$,
such that for any
equicontinuous factor $\theta:(X,G,T)\to(Y,G,S)$,
there exists a unique factor map $\psi:(X_{\rm max}, G, T_{\rm max})\to(Y,G,S)$
with $\psi\circ\pi_{\rm max}=\theta$.
The maximal equicontinuous factor exists and is unique (up to topological
conjugacy), see \cite[Theorem 3.8]{MR3381481} and \cite[Theorem 2.44]{MR2041676}.

Let $\theta:(X,G,T)\to(Y,G,S)$ be a factor map.
We call the preimage set $\theta^{-1}(y)$ of a point $y\in Y$ the \defn{fiber} of $\theta$ over $y$.
The cardinality of the fiber $\theta^{-1}(y)$ for some $y\in Y$ has an important role and is related to the definition of other notions.
In particular, the factor map $\theta$ is \defn{almost one-to-one} if 
$\{y\in Y:\card(\theta^{-1}(y))=1\}$
is a $G_\delta$-dense set in $Y$.
In that case, $(X,G,T)$ is an \defn{almost one-to-one extension} of $(Y,G,S)$.
Moreover, it provides a sufficient condition to prove that an
equicontinuous factor of a minimal dynamical system is the maximal one as
stated in the next lemma from \cite{MR3381481}.

\begin{lemma}\label{lem:ABKL15-lemma3.11} {\rm\cite[Lemma 3.11]{MR3381481}}
    Let $(X,G,T)$ be a minimal dynamical system
    and $(Y,G,S)$ an equicontinuous dynamical system.
    If $(Y,G,S)$ is a factor of $(X,G,T)$ with factor map $\theta$
    and there exists $y\in Y$ such that $\card(\theta^{-1}(y))=1$,
    then $(Y,G,S)$ is the maximal equicontinuous factor.
\end{lemma}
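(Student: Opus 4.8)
The plan is to exploit the universal property of the maximal equicontinuous factor to obtain a canonical factor map onto $(Y,G,S)$ and then to prove that this map is a topological conjugacy. By the existence result recalled above, let $(X_{\rm max},G,T_{\rm max})$ denote the maximal equicontinuous factor of $(X,G,T)$, with equicontinuous factor map $\pi_{\rm max}:X\to X_{\rm max}$. Since $(Y,G,S)$ is equicontinuous and $\pi:(X,G,T)\to(Y,G,S)$ is a factor map, the universal property furnishes a unique factor map $\psi:X_{\rm max}\to Y$ with $\psi\circ\pi_{\rm max}=\pi$. It therefore suffices to show that $\psi$ is a topological conjugacy, for then $(Y,G,S)$ is conjugate to the maximal equicontinuous factor and, by uniqueness up to conjugacy, is itself the maximal equicontinuous factor. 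Both $X_{\rm max}$ and $Y$ are minimal, being factors of the minimal system $X$, and both are equicontinuous; as continuous images of the compact space $X$ they are compact. Since a continuous bijection between compact Hausdorff spaces is automatically a homeomorphism and $\psi$ intertwines the actions, the whole problem reduces to proving that $\psi$ is injective.

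First I would exhibit one singleton fiber of $\psi$. Let $x_0$ be the unique point with $\pi(x_0)=y$ and put $z_0:=\pi_{\rm max}(x_0)$, so $\psi(z_0)=\pi(x_0)=y$. If $\psi(z)=y$ for some $z\in X_{\rm max}$, surjectivity of $\pi_{\rm max}$ gives $x$ with $\pi_{\rm max}(x)=z$, whence $\pi(x)=\psi(\pi_{\rm max}(x))=y$; as $\card(\pi^{-1}(y))=1$ this forces $x=x_0$ and hence $z=z_0$. Thus $\psi^{-1}(y)=\{z_0\}$, so $\psi$ has at least one fiber of cardinality one.

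The hard part is to propagate this single trivial fiber to global injectivity, and this is where the equicontinuous structure enters. By the structure theorem for minimal equicontinuous Abelian actions recalled above, I may identify $X_{\rm max}$ with a compact Abelian group $(K,+)$ on which $T_{\rm max}^g$ acts as $k\mapsto k+\rho(g)$, and $Y$ with $(L,+)$ on which $S^g$ acts as $\ell\mapsto\ell+\sigma(g)$, where $\rho(G)$ and $\sigma(G)$ are dense by minimality. Setting $\psi'(k):=\psi(k)-\psi(0_K)$ gives a map that still intertwines the actions, since subtracting a constant commutes with translation; one computes $\psi'(\rho(g))=\sigma(g)$ for every $g\in G$, so the additivity identity $\psi'(k_1+k_2)=\psi'(k_1)+\psi'(k_2)$ holds on the dense set $\rho(G)\times\rho(G)$ and, by continuity, on all of $K\times K$. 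Hence $\psi'$ is a continuous group homomorphism and $\psi$ is affine, so its fibers are precisely the cosets of $\ker\psi'$ and all share the cardinality $\card(\ker\psi')$. Because $\psi^{-1}(y)$ is a singleton, $\ker\psi'$ is trivial and $\psi$ is injective.

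Putting the pieces together, $\psi$ is a continuous bijection of compact Hausdorff spaces intertwining the $G$-actions, hence a topological conjugacy $(X_{\rm max},G,T_{\rm max})\cong(Y,G,S)$, which identifies $(Y,G,S)$ as the maximal equicontinuous factor. I expect the affine-map argument to be the only substantive step; the reduction to injectivity, the location of the singleton fiber, and the passage from a continuous bijection to a conjugacy are formal consequences of the universal property, of minimality, and of compactness.
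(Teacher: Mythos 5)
Your proof is correct, but note that the paper does not actually prove this lemma: it imports it verbatim as \cite[Lemma 3.11]{MR3381481}, so there is no internal proof to match. Your argument is a legitimate self-contained alternative. The route taken in the cited reference (and the one most consonant with the machinery this paper does recall, namely Auslander's theorem) is relational: one shows that the equivalence relation induced by $\pi$ contains the regional proximality relation because $Y$ is equicontinuous, and conversely that a singleton fiber over $y$ forces any pair $x,x'$ with $\pi(x)=\pi(x')$ to be (regionally) proximal --- by minimality push $\pi(x)$ toward $y$ along a net $g_i$, so that $g_ix$ and $g_ix'$ both accumulate on the unique point of $\pi^{-1}(y)$; hence the two relations coincide and $Y$ is the quotient defining $X_{\rm max}$. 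Your proof instead passes through the universal property to get $\psi:X_{\rm max}\to Y$ with $\psi\circ\pi_{\rm max}=\pi$, and then uses the structure theorem for minimal equicontinuous actions of Abelian groups to realize $\psi$ as an affine map between compact Abelian groups, so that all fibers are cosets of $\ker\psi'$ and the single trivial fiber (correctly located via $z_0=\pi_{\rm max}(x_0)$) forces injectivity; the density-plus-continuity extension of additivity from $\rho(G)\times\rho(G)$ to $K\times K$ is sound. The trade-off is that your argument quietly uses two hypotheses absent from the bare statement --- compactness of $X$ and commutativity of $G$ --- both of which hold throughout this paper ($G=\Z^2$, subshifts and tori), whereas the regional-proximality proof needs neither the group structure of the factor nor $G$ Abelian. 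You should state these standing assumptions explicitly if you present the affine argument.
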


The \defn{set of fiber cardinalities}
of a factor map $\theta:(X,G,T)\to(Y,G,S)$
is the set $\{\card(\theta^{-1}(y)) : y \in Y\}\subset\N\cup\{\infty\}$, see \cite{MR1877329}.
Note that different terminology is used in \cite{MR1355301} as
the set of fiber cardinalities of a factor map is called \emph{thickness spectrum}
and
its supremum is called \emph{thickness} whereas
the supremum is called \emph{maximum rank} in \cite{MR3381481}.
As shown in the next lemma, the set of fiber cardinalities
of the maximal equicontinuous factor of a minimal dynamical system is invariant
under topological conjugacy.

\begin{lemma}\label{lem:fiber-card-invariant}
    Let $(X,G,T)$ and $(Y,G,S)$ be a minimal dynamical systems.
    Let $f:(X,G,T)\to(X_{\rm max},G,T_{\rm max})$ 
    and $g:(Y,G,S)\to(Y_{\rm max},G,S_{\rm max})$ be two maximal equicontinuous factors.
    If $X$ and $Y$ are topologically conjugate,
    then $f$ and $g$ have the same set of fiber cardinalities.
\end{lemma}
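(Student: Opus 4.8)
The plan is to use the universal property that defines the maximal equicontinuous factor in order to promote the given topological conjugacy $h\colon X\to Y$ into a topological conjugacy $\psi\colon X_{\rm max}\to Y_{\rm max}$ between the two maximal equicontinuous factors, and then to transport fibers along $h$.

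First I would observe that, since $h$ is a topological conjugacy and $g$ is an equicontinuous factor map, the composition $g\circ h\colon(X,G,T)\to(Y_{\rm max},G,S_{\rm max})$ is again a factor map onto an equicontinuous system, hence an equicontinuous factor of $(X,G,T)$. By the universal property of $f$ as the maximal equicontinuous factor of $(X,G,T)$, there exists a unique factor map $\psi\colon(X_{\rm max},G,T_{\rm max})\to(Y_{\rm max},G,S_{\rm max})$ with $\psi\circ f=g\circ h$. Symmetrically, $f\circ h^{-1}$ is an equicontinuous factor of $(Y,G,S)$, so there is a unique factor map $\phi\colon Y_{\rm max}\to X_{\rm max}$ with $\phi\circ g=f\circ h^{-1}$.

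Next I would show that $\psi$ and $\phi$ are mutually inverse, which is the crux of the argument and the step where the \emph{uniqueness} clause of the universal property does the work. Computing, $\phi\circ\psi\circ f=\phi\circ g\circ h=f\circ h^{-1}\circ h=f$, so both $\phi\circ\psi$ and $\mathrm{id}_{X_{\rm max}}$ are factor maps $X_{\rm max}\to X_{\rm max}$ satisfying $(\,\cdot\,)\circ f=f$; by the uniqueness in the universal property of $f$, they coincide, i.e.\ $\phi\circ\psi=\mathrm{id}_{X_{\rm max}}$. The same computation with the roles of $X$ and $Y$ interchanged gives $\psi\circ\phi=\mathrm{id}_{Y_{\rm max}}$. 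Hence $\psi$ is a bijection with continuous inverse $\phi$, that is, a topological conjugacy, and it still satisfies $\psi\circ f=g\circ h$.

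Finally I would transport fibers. Fix $x\in X_{\rm max}$ and set $y=\psi(x)$. For a point $p\in X$ one has $f(p)=x$ iff $\psi(f(p))=y$ (as $\psi$ is injective) iff $g(h(p))=y$ (as $\psi\circ f=g\circ h$), so $p\in f^{-1}(x)$ iff $h(p)\in g^{-1}(y)$; since $h$ is a global bijection, this identity shows $h$ restricts to a bijection $f^{-1}(x)\to g^{-1}(y)$, whence $\card(f^{-1}(x))=\card(g^{-1}(\psi(x)))$. As $\psi$ is onto $Y_{\rm max}$, letting $x$ range over $X_{\rm max}$ shows that $\{\card(f^{-1}(x)):x\in X_{\rm max}\}$ and $\{\card(g^{-1}(y)):y\in Y_{\rm max}\}$ coincide, which is the claim. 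The main obstacle is establishing that $\psi$ is a conjugacy; everything after that is a diagram chase, and minimality is used only insofar as it is the standing setting guaranteeing that the maximal equicontinuous factors are available.
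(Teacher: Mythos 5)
Your proof is correct, but it follows a genuinely different route from the paper's. You promote the conjugacy $h\colon X\to Y$ to a conjugacy $\psi\colon X_{\rm max}\to Y_{\rm max}$ purely by the universal property of the maximal equicontinuous factor: $g\circ h$ is an equicontinuous factor of $(X,G,T)$, so it factors uniquely through $f$, and the standard uniqueness argument (both $\phi\circ\psi$ and $\mathrm{id}_{X_{\rm max}}$ are factor maps composing with $f$ to give $f$) shows $\psi$ is invertible. The paper instead invokes Auslander's theorem: for a minimal system the fibers of the maximal equicontinuous factor are exactly the regional proximality classes, and regional proximality is preserved by topological conjugacy; it then defines $\pi=g\circ\theta\circ f^{-1}$ directly and checks well-definedness and injectivity by hand. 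Your argument is more economical and more general — it uses minimality only to the extent that the maximal equicontinuous factors are assumed given, and it needs neither Auslander's theorem nor the regionally proximal relation — whereas the paper's argument yields as a by-product the concrete identification of the fibers $f^{-1}(x)$ with regional proximality classes. The final fiber-transport step (the identity $g^{-1}(\psi(x))=h(f^{-1}(x))$ and hence $\card(f^{-1}(x))=\card(g^{-1}(\psi(x)))$, with surjectivity of $\psi$ giving equality of the two sets of cardinalities) is essentially the same diagram chase in both proofs.
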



The maximal equicontinuous factor $f:(X,G,T)\to(X_{\rm max},G,T_{\rm max})$
defines an equivalence relation on the elements $a,b\in X$ as $a\equiv b$ if and only if
$f(a)=f(b)$.
A theorem of Auslander \cite[p.130]{MR956049} 
on the equivalence relation defined by the maximal equicontinuous factor
says that if $(X,G,T)$ is minimal, then
$f(a)=f(b)$ if and only if $a$ and $b$ are regionally proximal.
Two elements $x, y \in X$ are said to be \defn{regionally proximal}
if there are sequences of elements $x_i,y_i\in X$ and a sequence of elements
$g_i\in G$ such that $\lim_{i\to\infty}x_i=x$, $\lim_{i\to\infty}y_i=y$ and 
$\lim_{i\to\infty} \dist(g_i x_i, g_iy_i)=0$.

\begin{proof}
    Let $\theta:(X,G,T)\to(Y,G,S)$ be a topological conjugacy.
    Let us show that the formula $\pi=g\circ\theta\circ f^{-1}$ defines a
    map $X_{\rm max}\to Y_{\rm max}$.
    Let $x\in X_{\rm max}$. Since $f$ is onto, there exists $a\in X$ such that $f(a)=x$.
    Suppose that $a,b\in f^{-1}(x)$.
    Thus $f(a)=f(b)$ and by Auslander's theorem, $a$ and $b$ are regionally proximal.
    That property depends only on the distance so it is preserved by the topological conjugacy.
    Thus $\theta(a)$ and $\theta(b)$ are regionally proximal.
    Therefore $g(\theta(a))=g(\theta(b))$ which shows that $\pi$ is well-defined.

    The map $\pi$ is one-to-one.
    Let $x,x'\in X_{\rm max}$ and suppose that $\pi(x)=\pi(x')$.
    Let $a,a'\in X$ such that $f(a)=x$ and $f(a')=x'$.
    Then $g(\theta(a))=\pi(x)=\pi(x')=g(\theta(a'))$.
    Thus $\theta(a)$ and $\theta(a')$ are regionally proximal from Auslander's theorem.
    Thus $a$ and $a'$ are regionally proximal and we obtain $x=f(a)=f(a')=x'$.

    It is sufficient to show that the fiber cardinalities of $f$ is a subset of
    the fiber cardinalities of $g$.
    Let $x\in X_{\rm max}$ such that $\pi(x)=y$. Then $g\circ\theta\circ f^{-1}(x)=y$ which means
    that
    $\theta(f^{-1}(x))\subseteq g^{-1}(y)$
    and
    $\{x\}\subseteq f(\theta^{-1}(g^{-1}(y)))$.
    Since $\pi$ is one-to-one, we deduce $\{x\}=f(\theta^{-1}(g^{-1}(y)))$.
    Thus $\theta^{-1}(g^{-1}(y))\subseteq f^{-1}(x)$
    and $g^{-1}(y)\subseteq \theta(f^{-1}(x))$.
    We conclude that $g^{-1}(y)=\theta(f^{-1}(x))$.
    In particular, $\card(f^{-1}(x))= \card(g^{-1}(y))$ and
    \[
    \{\card(f^{-1}(x)) : x \in X_{\rm max}\}
    \subseteq
    \{\card(g^{-1}(y)) : y \in Y_{\rm max}\}.
    \]
    The equality follows from the symmetry of the argument.
\end{proof}

\subsection{Subshifts and shifts of finite type}


We follow the notation of \cite{MR1861953}.
Let $\A$ be a finite set, $d\geq 1$, and let $\A^{\Z^d}$ be the set of all maps
$x:\Z^d\to\A$, equipped with the compact product topology. 
An element $x\in\A^{\Z^d}$ is called \defn{configuration}
and we write it as $x=(x_\bm)=(x_\bm:\bm\in\Z^d)$,
where $x_\bm\in\A$ denotes the value of $x$ at $\bm$. 
The topology on $\A^{\Z^d}$ is compatible with the metric defined for all
configurations $x,x'\in\A^{\Z^d}$ by $\dist(x,x')=2^{-\min\left\{\Vert\bn\Vert\,:\,
x_\bn\neq x'_\bn\right\}}$
where $\Vert\bn\Vert = |n_1| + \dots + |n_d|$.
The \defn{shift action} $\sigma:\bn\mapsto
\sigma^\bn$ of $\Z^d$ on $\A^{\Z^d}$ is defined by
\begin{equation}\label{eq:shift-action}
    (\sigma^\bn(x))_\bm = x_{\bm+\bn}
\end{equation}
for every $x=(x_\bm)\in\A^{\Z^d}$ and $\bn\in\Z^d$. 
If $X\subset \A^{\Z^d}$,
let $\overline{X}$ denote the topological closure of $X$
and let $\sigma(X)=\{\sigma^\bn(x)\mid x\in X, \bn\in\Z^d\}$
denote the shift-closure of $X$.
A subset $X\subset
\A^{\Z^d}$ is \defn{shift-invariant} if 
$\sigma(X)=X$ and a closed, shift-invariant subset
$X\subset\A^{\Z^d}$ is a \defn{subshift}. 
If $X\subset\A^{\Z^d}$ is a subshift we write
$\sigma=\sigma^X$ for the restriction of the shift action
\eqref{eq:shift-action} to $X$. 
When $X$ is a subshift,
the triple $(X,\Z^d,\sigma)$ is a dynamical system
and the notions presented in the previous section hold.

A configuration $x\in X$ is \defn{periodic} if there is a nonzero vector
$\bn\in\Z^d\setminus\{\zero\}$ such that $x=\sigma^\bn(x)$
and otherwise it is said \defn{nonperiodic}.
We say that a nonempty subshift $X$ is \defn{aperiodic}
if the shift action $\sigma$ on $X$ is free.

For any subset $S\subset\Z^d$ let $\pi_S:\A^{\Z^d}\to\A^S$ denote the
projection map which restricts every $x\in\A^{\Z^d}$ to $S$. 
A \defn{pattern} is a function $p\in\A^S$ for some finite subset
$S\subset\Z^d$.
To every pattern $p\in\A^S$ corresponds
a subset $\pi_S^{-1}(p)\subset\A^{\Z^d}$ called \defn{cylinder}.
A subshift $X\subset\A^{\Z^d}$ is a 
\defn{shift of finite type} (SFT) if there exists a finite set $\Fcal$
of \defn{forbidden} patterns such that
\begin{equation}\label{eq:SFT}
    X = \{x\in\A^{\Z^d} \mid \pi_S\circ\sigma^\bn(x)\notin\Fcal
    \text{ for every } \bn\in\Z^d \text{ and } S\subset\Z^d\}.
\end{equation}
In this case, we write $X=\SFT(\Fcal)$.
In this article, we consider shifts of finite type on $\Z\times\Z$, that is, the case
$d=2$.

\section{Symbolic representations and Markov partitions for toral $\Z^2$-rotations}
\label{sec:symbolic-representation}

We follow the section 
\cite[\S 6.5]{MR1369092} on Markov partitions
where we adapt it to the case of invertible $\Z^2$-actions.
A \defn{topological partition} of a metric space $M$ is a finite
collection $\{P_0,P_1,...,P_{r-1}\}$ of disjoint open sets such that $M = 
\overline{P_0}\cup \overline{P_1}\cup\cdots\cup \overline{P_{r-1}}$.


Suppose that $M$ is a compact metric space,
$(M,\Z^2,R)$ is a dynamical system
and that $\Pcal=\{P_0,P_1,...,P_{r-1}\}$ is a topological partition of $M$.
Let $\A=\{0,1,\dots,r-1\}$ and $S\subset\Z^2$ be a finite set. 
We say that a pattern $w\in\A^S$
is \defn{allowed} for $\Pcal,R$ if
\[
    \bigcap_{\bk\in S} R^{-\bk}(P_{w_\bk}) \neq \varnothing.
\]
Let $\Lcal_{\Pcal,R}$ be the collection of all allowed patterns for $\Pcal,R$.
It can be checked that $\Lcal_{\Pcal,R}$ is the language of a subshift.
Hence, using standard arguments \cite[Prop. 1.3.4]{MR1369092}, there is a
unique subshift $\Xcal_{\Pcal,R}\subset\A^{\Z^2}$ whose language is
$\Lcal_{\Pcal,R}$.  We call $\Xcal_{\Pcal,R}$ the \defn{symbolic dynamical
system corresponding to $\Pcal,R$}.
For each $w\in\Xcal_{\Pcal,R}\subset\A^{\Z^2}$ and $n\geq 0$ there is a corresponding nonempty open set
\[
    D_n(w) = \bigcap_{\Vert\bk\Vert\leq n} R^{-\bk}(P_{w_\bk}) \subseteq M.
\]
The closures $\overline{D}_n(w)$ of these sets are compact
and decrease with $n$, so that
$\overline{D}_0(w)\supseteq
\overline{D}_1(w)\supseteq
\overline{D}_2(w)\supseteq
\dots$.
It follows that $\cap_{n=0}^{\infty}\overline{D}_n(w)\neq\varnothing$.
In order for configurations in $\Xcal_{\Pcal,R}$
to correspond to points in $M$, this intersection should contain only one point.
This leads to the following definition.

\begin{definition}\label{def:symb-repr}
Let $M$ be a compact metric space and
$(M,\Z^2,R)$ be a dynamical system.
A topological partition $\Pcal$ of $M$ gives a \defn{symbolic representation}
of $(M,\Z^2,R)$ if for every configuration $w\in\Xcal_{\Pcal,R}$ the intersection
$\cap_{n=0}^{\infty}\overline{D}_n(w)$ consists of exactly one
point $m\in M$.
We call $w$ a \defn{symbolic representation of $m$}.
\end{definition}

Markov partition were originally defined for one-dimensional dynamical
systems $(M,\Z,R)$ and were extended to $\Z^d$-actions by automorphisms of
compact Abelian group in \cite{MR1632169}.
We allow ourselves to use the same terminology for
dynamical systems $(M,\Z^2,R)$ defined by higher-dimensional actions by rotations.

\begin{definition}\label{def:Markov}
A topological partition $\Pcal$ of $M$ is a \defn{Markov partition} for
$(M,\Z^2,R)$ if 
\begin{itemize}
    \item $\Pcal$ gives a symbolic representation of $(M,\Z^2,R)$ and 
    \item $\Xcal_{\Pcal,R}$ is a shift of finite type (SFT).
\end{itemize}
\end{definition}

Of course, 2-dimensional SFTs are much
different then 1-dimensional SFTs. 
For example, there exist 2-dimensional aperiodic SFTs with zero entropy. 
But this is not possible in the one-dimensional case, 
since one-dimensional infinite SFTs have positive entropy.
In this article, we consider partitions associated to
2-dimensional aperiodic Wang shifts with zero entropy. 

\newcommand\JeandelRaoO{
\begin{tikzpicture}
[scale=0.900000000000000]
\tikzstyle{every node}=[font=\footnotesize]
\fill[cyan] (1, 0) -- (0.5, 0.5) -- (1, 1);
\fill[lightgray] (0, 1) -- (0.5, 0.5) -- (1, 1);
\fill[cyan] (0, 0) -- (0.5, 0.5) -- (0, 1);
\fill[red] (0, 0) -- (0.5, 0.5) -- (1, 0);
\draw (1, 0) -- ++ (0,1);
\draw (0, 1) -- ++ (1,0);
\draw (0, 0) -- ++ (0,1);
\draw (0, 0) -- ++ (1,0);
\node[rotate=0,black] at (0.800000000000000, 0.5) {2};
\node[rotate=0,black] at (0.5, 0.800000000000000) {4};
\node[rotate=0,black] at (0.200000000000000, 0.5) {2};
\node[rotate=0,black] at (0.5, 0.200000000000000) {1};
\end{tikzpicture}
} 
\newcommand\JeandelRaoI{
\begin{tikzpicture}
[scale=0.900000000000000]
\tikzstyle{every node}=[font=\footnotesize]
\fill[cyan] (1, 0) -- (0.5, 0.5) -- (1, 1);
\fill[cyan] (0, 1) -- (0.5, 0.5) -- (1, 1);
\fill[cyan] (0, 0) -- (0.5, 0.5) -- (0, 1);
\fill[white] (0, 0) -- (0.5, 0.5) -- (1, 0);
\draw (1, 0) -- ++ (0,1);
\draw (0, 1) -- ++ (1,0);
\draw (0, 0) -- ++ (0,1);
\draw (0, 0) -- ++ (1,0);
\node[rotate=0,black] at (0.800000000000000, 0.5) {2};
\node[rotate=0,black] at (0.5, 0.800000000000000) {2};
\node[rotate=0,black] at (0.200000000000000, 0.5) {2};
\node[rotate=0,black] at (0.5, 0.200000000000000) {0};
\end{tikzpicture}
} 
\newcommand\JeandelRaoII{
\begin{tikzpicture}
[scale=0.900000000000000]
\tikzstyle{every node}=[font=\footnotesize]
\fill[red] (1, 0) -- (0.5, 0.5) -- (1, 1);
\fill[red] (0, 1) -- (0.5, 0.5) -- (1, 1);
\fill[green] (0, 0) -- (0.5, 0.5) -- (0, 1);
\fill[red] (0, 0) -- (0.5, 0.5) -- (1, 0);
\draw (1, 0) -- ++ (0,1);
\draw (0, 1) -- ++ (1,0);
\draw (0, 0) -- ++ (0,1);
\draw (0, 0) -- ++ (1,0);
\node[rotate=0,black] at (0.800000000000000, 0.5) {1};
\node[rotate=0,black] at (0.5, 0.800000000000000) {1};
\node[rotate=0,black] at (0.200000000000000, 0.5) {3};
\node[rotate=0,black] at (0.5, 0.200000000000000) {1};
\end{tikzpicture}
} 
\newcommand\JeandelRaoIII{
\begin{tikzpicture}
[scale=0.900000000000000]
\tikzstyle{every node}=[font=\footnotesize]
\fill[red] (1, 0) -- (0.5, 0.5) -- (1, 1);
\fill[cyan] (0, 1) -- (0.5, 0.5) -- (1, 1);
\fill[green] (0, 0) -- (0.5, 0.5) -- (0, 1);
\fill[cyan] (0, 0) -- (0.5, 0.5) -- (1, 0);
\draw (1, 0) -- ++ (0,1);
\draw (0, 1) -- ++ (1,0);
\draw (0, 0) -- ++ (0,1);
\draw (0, 0) -- ++ (1,0);
\node[rotate=0,black] at (0.800000000000000, 0.5) {1};
\node[rotate=0,black] at (0.5, 0.800000000000000) {2};
\node[rotate=0,black] at (0.200000000000000, 0.5) {3};
\node[rotate=0,black] at (0.5, 0.200000000000000) {2};
\end{tikzpicture}
} 
\newcommand\JeandelRaoIV{
\begin{tikzpicture}
[scale=0.900000000000000]
\tikzstyle{every node}=[font=\footnotesize]
\fill[green] (1, 0) -- (0.5, 0.5) -- (1, 1);
\fill[red] (0, 1) -- (0.5, 0.5) -- (1, 1);
\fill[green] (0, 0) -- (0.5, 0.5) -- (0, 1);
\fill[green] (0, 0) -- (0.5, 0.5) -- (1, 0);
\draw (1, 0) -- ++ (0,1);
\draw (0, 1) -- ++ (1,0);
\draw (0, 0) -- ++ (0,1);
\draw (0, 0) -- ++ (1,0);
\node[rotate=0,black] at (0.800000000000000, 0.5) {3};
\node[rotate=0,black] at (0.5, 0.800000000000000) {1};
\node[rotate=0,black] at (0.200000000000000, 0.5) {3};
\node[rotate=0,black] at (0.5, 0.200000000000000) {3};
\end{tikzpicture}
} 
\newcommand\JeandelRaoV{
\begin{tikzpicture}
[scale=0.900000000000000]
\tikzstyle{every node}=[font=\footnotesize]
\fill[white] (1, 0) -- (0.5, 0.5) -- (1, 1);
\fill[red] (0, 1) -- (0.5, 0.5) -- (1, 1);
\fill[green] (0, 0) -- (0.5, 0.5) -- (0, 1);
\fill[red] (0, 0) -- (0.5, 0.5) -- (1, 0);
\draw (1, 0) -- ++ (0,1);
\draw (0, 1) -- ++ (1,0);
\draw (0, 0) -- ++ (0,1);
\draw (0, 0) -- ++ (1,0);
\node[rotate=0,black] at (0.800000000000000, 0.5) {0};
\node[rotate=0,black] at (0.5, 0.800000000000000) {1};
\node[rotate=0,black] at (0.200000000000000, 0.5) {3};
\node[rotate=0,black] at (0.5, 0.200000000000000) {1};
\end{tikzpicture}
} 
\newcommand\JeandelRaoVI{
\begin{tikzpicture}
[scale=0.900000000000000]
\tikzstyle{every node}=[font=\footnotesize]
\fill[white] (1, 0) -- (0.5, 0.5) -- (1, 1);
\fill[white] (0, 1) -- (0.5, 0.5) -- (1, 1);
\fill[white] (0, 0) -- (0.5, 0.5) -- (0, 1);
\fill[red] (0, 0) -- (0.5, 0.5) -- (1, 0);
\draw (1, 0) -- ++ (0,1);
\draw (0, 1) -- ++ (1,0);
\draw (0, 0) -- ++ (0,1);
\draw (0, 0) -- ++ (1,0);
\node[rotate=0,black] at (0.800000000000000, 0.5) {0};
\node[rotate=0,black] at (0.5, 0.800000000000000) {0};
\node[rotate=0,black] at (0.200000000000000, 0.5) {0};
\node[rotate=0,black] at (0.5, 0.200000000000000) {1};
\end{tikzpicture}
} 
\newcommand\JeandelRaoVII{
\begin{tikzpicture}
[scale=0.900000000000000]
\tikzstyle{every node}=[font=\footnotesize]
\fill[green] (1, 0) -- (0.5, 0.5) -- (1, 1);
\fill[red] (0, 1) -- (0.5, 0.5) -- (1, 1);
\fill[white] (0, 0) -- (0.5, 0.5) -- (0, 1);
\fill[cyan] (0, 0) -- (0.5, 0.5) -- (1, 0);
\draw (1, 0) -- ++ (0,1);
\draw (0, 1) -- ++ (1,0);
\draw (0, 0) -- ++ (0,1);
\draw (0, 0) -- ++ (1,0);
\node[rotate=0,black] at (0.800000000000000, 0.5) {3};
\node[rotate=0,black] at (0.5, 0.800000000000000) {1};
\node[rotate=0,black] at (0.200000000000000, 0.5) {0};
\node[rotate=0,black] at (0.5, 0.200000000000000) {2};
\end{tikzpicture}
} 
\newcommand\JeandelRaoVIII{
\begin{tikzpicture}
[scale=0.900000000000000]
\tikzstyle{every node}=[font=\footnotesize]
\fill[white] (1, 0) -- (0.5, 0.5) -- (1, 1);
\fill[cyan] (0, 1) -- (0.5, 0.5) -- (1, 1);
\fill[red] (0, 0) -- (0.5, 0.5) -- (0, 1);
\fill[cyan] (0, 0) -- (0.5, 0.5) -- (1, 0);
\draw (1, 0) -- ++ (0,1);
\draw (0, 1) -- ++ (1,0);
\draw (0, 0) -- ++ (0,1);
\draw (0, 0) -- ++ (1,0);
\node[rotate=0,black] at (0.800000000000000, 0.5) {0};
\node[rotate=0,black] at (0.5, 0.800000000000000) {2};
\node[rotate=0,black] at (0.200000000000000, 0.5) {1};
\node[rotate=0,black] at (0.5, 0.200000000000000) {2};
\end{tikzpicture}
} 
\newcommand\JeandelRaoIX{
\begin{tikzpicture}
[scale=0.900000000000000]
\tikzstyle{every node}=[font=\footnotesize]
\fill[red] (1, 0) -- (0.5, 0.5) -- (1, 1);
\fill[cyan] (0, 1) -- (0.5, 0.5) -- (1, 1);
\fill[red] (0, 0) -- (0.5, 0.5) -- (0, 1);
\fill[lightgray] (0, 0) -- (0.5, 0.5) -- (1, 0);
\draw (1, 0) -- ++ (0,1);
\draw (0, 1) -- ++ (1,0);
\draw (0, 0) -- ++ (0,1);
\draw (0, 0) -- ++ (1,0);
\node[rotate=0,black] at (0.800000000000000, 0.5) {1};
\node[rotate=0,black] at (0.5, 0.800000000000000) {2};
\node[rotate=0,black] at (0.200000000000000, 0.5) {1};
\node[rotate=0,black] at (0.5, 0.200000000000000) {4};
\end{tikzpicture}
} 
\newcommand\JeandelRaoX{
\begin{tikzpicture}
[scale=0.900000000000000]
\tikzstyle{every node}=[font=\footnotesize]
\fill[green] (1, 0) -- (0.5, 0.5) -- (1, 1);
\fill[green] (0, 1) -- (0.5, 0.5) -- (1, 1);
\fill[red] (0, 0) -- (0.5, 0.5) -- (0, 1);
\fill[cyan] (0, 0) -- (0.5, 0.5) -- (1, 0);
\draw (1, 0) -- ++ (0,1);
\draw (0, 1) -- ++ (1,0);
\draw (0, 0) -- ++ (0,1);
\draw (0, 0) -- ++ (1,0);
\node[rotate=0,black] at (0.800000000000000, 0.5) {3};
\node[rotate=0,black] at (0.5, 0.800000000000000) {3};
\node[rotate=0,black] at (0.200000000000000, 0.5) {1};
\node[rotate=0,black] at (0.5, 0.200000000000000) {2};
\end{tikzpicture}
} 

The partitions we consider are partitions of the 2-dimensional torus.
Let $\Gamma$ be a \defn{lattice} in $\R^2$, i.e., a discrete subgroup of the
additive group $\R^2$ with $2$ linearly independent generators.
This defines a $2$-dimensional torus $\generictorus=\R^2/\Gamma$. 
By analogy with the rotation $x\mapsto x+\alpha$ on the circle $\R/\Z$ for
some $\alpha\in\R/\Z$, we use the terminology of \emph{rotation}
to denote the following $\Z^2$-action defined on a 2-dimensional torus.

\begin{definition}\label{def:Z2-rotation}
For some $\balpha,\bbeta\in\generictorus$, we consider
the dynamical system $(\generictorus, \Z^2, R)$ where
$R:\Z^2\times\generictorus\to\generictorus$ 
is the continuous $\Z^2$-action on $\generictorus$
defined by
\[
R^\bn(\bx):=R(\bn,\bx)=\bx + n_1\balpha + n_2\bbeta
\]
for every $\bn=(n_1,n_2)\in\Z^2$.
We say that the $\Z^2$-action $R$ is a 
    \defn{toral $\Z^2$-rotation} or a
    \defn{$\Z^2$-rotation on} $\generictorus$.
\end{definition}

From now on, 
we assume that the compact metric space $M$ is~$\generictorus$
and that $R$ is a $\Z^2$-rotation on~$\generictorus$
when
we consider dynamical systems $(M,\Z^2,R)$.

\begin{lemma}\label{lem:symbolic-representation}
    Let $(\generictorus,\Z^2,R)$ be a minimal dynamical system
    and $\Pcal=\{P_0,P_1,...,P_{r-1}\}$ be a topological partition
    of $\generictorus$.
    If there exists an atom $P_i$ which is invariant only under the trivial
    translation in $\generictorus$,
    then $\Pcal$ gives a symbolic representation of $(\generictorus,\Z^2,R)$.
\end{lemma}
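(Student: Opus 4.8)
The plan is to establish the only nontrivial half of Definition~\ref{def:symb-repr}: that for each $w\in\Xcal_{\Pcal,R}$ the intersection $\bigcap_{n=0}^{\infty}\overline{D}_n(w)$ contains \emph{at most} one point, nonemptiness having already been recorded just before the definition. So suppose $\bx,\by\in\bigcap_{n=0}^{\infty}\overline{D}_n(w)$ and set $\bv:=\by-\bx$; I will show $\bv=\zero$ by contradiction, assuming $\bv\neq\zero$. First I record what membership in the intersection means. Since $R^{-\bk}$ is a homeomorphism it commutes with closure, and the closure of an intersection is contained in the intersection of the closures, so $\overline{D}_n(w)\subseteq\bigcap_{\Vert\bk\Vert\leq n}R^{-\bk}(\overline{P_{w_\bk}})$. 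Letting $n\to\infty$, $\bx\in\bigcap_n\overline{D}_n(w)$ gives $R^\bk(\bx)\in\overline{P_{w_\bk}}$ for every $\bk\in\Z^2$, and likewise $R^\bk(\by)\in\overline{P_{w_\bk}}$. Because $R$ is a $\Z^2$-rotation, $R^\bk(\by)=R^\bk(\bx)+\bv$, so for every $\bk$ both $R^\bk(\bx)$ and its translate $R^\bk(\bx)+\bv$ lie in the \emph{same} atom closure $\overline{P_{w_\bk}}$.

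Next I would upgrade this orbit statement to a statement about the whole torus using minimality. The set $A:=\bigcup_{j}\bigl(\overline{P_j}\cap(\overline{P_j}-\bv)\bigr)$ is closed, being a finite union of intersections of closed sets, and by the previous paragraph it contains the entire orbit $\{R^\bk(\bx):\bk\in\Z^2\}$; since $(\generictorus,\Z^2,R)$ is minimal this orbit is dense, whence $A=\generictorus$. In other words, every $\bz\in\generictorus$ lies in some $\overline{P_j}$ together with $\bz+\bv$. Specializing to $\bz\in P_i$, I use that $P_i$ is open and disjoint from the other atoms: a point of the open set $P_i$ can belong to no closure $\overline{P_l}$ with $l\neq i$ (a neighborhood inside $P_i$ misses $P_l$), so the only possible index is $j=i$ and hence $\bz+\bv\in\overline{P_i}$. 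This proves $P_i+\bv\subseteq\overline{P_i}$. Running the same argument with the roles of $\bx$ and $\by$ interchanged (so $\bv$ is replaced by $-\bv$) gives $P_i-\bv\subseteq\overline{P_i}$; taking closures of both inclusions yields $\overline{P_i}+\bv=\overline{P_i}$, so the closed atom $\overline{P_i}$ is invariant under the translation $\bv$.

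The main obstacle is the final upgrade from invariance of the \emph{closed} set $\overline{P_i}$ to invariance of the \emph{open} atom $P_i$, which is what the hypothesis forbids for $\bv\neq\zero$. Since translation by $\bv$ is a homeomorphism fixing $\overline{P_i}$, it also fixes $\mathrm{int}\,\overline{P_i}$, so what the argument above genuinely produces is $\mathrm{int}\,\overline{P_i}+\bv=\mathrm{int}\,\overline{P_i}$; the delicate point is that limits only ever certify \emph{closure} membership, so one cannot in general rule out that $\bv$ pushes an interior point of $P_i$ onto $\partial P_i$. For the polygonal partitions considered later the atoms are regular open, i.e.\ $P_i=\mathrm{int}\,\overline{P_i}$, and this step is then immediate, yielding $P_i+\bv=P_i$. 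Either way, $P_i$ is invariant under the nonzero translation $\bv$, contradicting the assumption that $P_i$ is invariant only under the trivial translation. Hence $\bv=\zero$, the intersection consists of a single point, and $\Pcal$ gives a symbolic representation of $(\generictorus,\Z^2,R)$.
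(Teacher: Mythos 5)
Your argument is correct in substance but reaches the conclusion by a genuinely different route than the paper. The paper argues pointwise: from the hypothesis it extracts a nonempty open set $O\subseteq \overline{P_i}\setminus(\overline{P_i}-(\by-\bx))$ with $O\subseteq\interior{P_i}$, uses minimality to send some $R^{\bk}\bx$ into $O$, deduces $w_{\bk}=i$, and concludes directly that $R^{\bk}\by\notin\overline{P_{w_{\bk}}}$, so $\by$ is excluded from $\bigcap_n\overline{D}_n(w)$. You instead prove a global statement: the closed set $\bigcup_j\bigl(\overline{P_j}\cap(\overline{P_j}-\bv)\bigr)$ contains the dense orbit of $\bx$, hence equals $\generictorus$, and restricting to $P_i$ (correctly using that a point of the open atom $P_i$ lies in no other atom's closure) yields $P_i\pm\bv\subseteq\overline{P_i}$ and then $\overline{P_i}+\bv=\overline{P_i}$. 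Both proofs exploit minimality in the same way, namely density of a single orbit; yours derives a stronger invariance statement and therefore must pay for it at the end, whereas the paper's witness-set formulation lets it stop one step earlier, before any invariance of $\overline{P_i}$ is asserted.

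The step you flag --- passing from $\overline{P_i}+\bv=\overline{P_i}$ to invariance of the open atom $P_i$ itself --- is a genuine issue for arbitrary topological partitions (an atom with a point deleted satisfies the hypothesis while its closure may admit a nonzero period), but it is worth observing that the paper's own proof contains the same unproved step: the assertion that $\overline{P_i}\setminus(\overline{P_i}-(\by-\bx))$ contains a nonempty open subset of $\interior{P_i}$ amounts to $P_i\not\subseteq\overline{P_i}-(\by-\bx)$, and deducing this from the stated hypothesis on $P_i$ again requires $P_i=\interior{\overline{P_i}}$ (denying it forces, after taking closures and comparing Haar measures, $\overline{P_i}=\overline{P_i}-(\by-\bx)$, which contradicts the hypothesis only for regular open atoms). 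For the polygonal partitions to which the lemma is applied the atoms are regular open, so both arguments close; your proof is acceptable with that caveat made explicit rather than left implicit.
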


\begin{proof}
    Let $\A=\{0,1,\dots,r-1\}$.
    Let $w\in\Xcal_{\Pcal,R}\subset\A^{\Z^2}$.
    As already noticed,
    the closures $\overline{D}_n(w)$ are compact
    and decrease with $n$, so that
    $\overline{D}_0(w)\supseteq
    \overline{D}_1(w)\supseteq
    \overline{D}_2(w)\supseteq
    \dots$.
    It follows that $\cap_{n=0}^{\infty}\overline{D}_n(w)\neq\varnothing$.

    We show that $\cap_{n=0}^{\infty}\overline{D}_n(w)$
    contains at most one element.
    Let $\bx,\by\in\generictorus$.
    We assume $\bx\in \cap_{n=0}^{\infty}\overline{D}_n(w)$
    and we want to show that $\by\notin \cap_{n=0}^{\infty}\overline{D}_n(w)$
    if $\bx\neq\by$.
    Let $P_i\subset\generictorus$
    for some $i\in\A$ be an atom which is invariant only under the
    trivial translation. 
    Since $\bx\neq\by$,
    $\overline{P_i}\setminus (\overline{P_i}-(\by-\bx))$ contains an open set $O$.
    Since $(\generictorus,\Z^2,R)$ is minimal,
    any orbit $\{R^\bk\bx\mid\bk\in\Z^2\}$ is dense in $\generictorus$.
    Therefore, there exists $\bk\in\Z^2$ such that
    $R^\bk\bx\in O\subset\interior{P_i}$. 
    Also $\bx\in \cap_{n=0}^{\infty}\overline{D}_n(w) \subset R^{-\bk}
    \overline{P_{w_\bk}}$ which implies
    $R^\bk\bx\in\overline{P_{w_\bk}}$.
    Thus $\overline{P_{w_\bk}}\cap\interior{P_i}\neq\varnothing$ which implies
    that $P_{w_\bk}=P_i$ and $w_\bk=i$
    since $\Pcal$ is a topological partition. Thus
    \[
        \cap_{n=0}^{\infty}\overline{D}_n(w)
        \subset R^{-\bk} \overline{P_{w_\bk}}
        =R^{-\bk} \overline{P_i}.
    \]
    The fact that $R^\bk\bx\in O$ 
    also means that 
    $R^\bk\bx\notin \overline{P_i}-(\by-\bx)$
    which can be rewritten as $R^\bk\by\notin \overline{P_i}$ or
    $\by\notin R^{-\bk}\overline{P_i}$ and we conclude that $\by\notin
    \cap_{n=0}^{\infty}\overline{D}_n(w)$.
    Thus $\Pcal$ gives a symbolic representation of
    $(\generictorus,\Z^2,R)$.
\end{proof}

\begin{remark}\label{rem:non-minimal}
    Note that minimality hypothesis in Lemma~\ref{lem:symbolic-representation}
    is not necessary.
    For example, the partition $\boxtimes$ of the torus $\torus=\R^2/\Z^2$
    gives a symbolic representation of the toral $\Z^2$-rotation
    defined by
    $R(\bn,\bx)=\bx + n_1(\sqrt{2},0) + n_2(\sqrt{3},0)$
    even if $(\torus,\Z^2,R)$ is not minimal.
\end{remark}

\section{A one-to-one map from the 2-torus to symbolic representations}
\label{sec:one-to-one-map}

The goal of this section is to express the symbolic dynamical system $\Xcal_{\Pcal,R}$
as the closure of the image of a one-to-one map defined on the $2$-torus.
First we define the map on the points of the torus having a unique symbolic
representation. Then, we extend it on all points of the torus by
approaching them from some direction $\bv\in\R^2$.

The set
\begin{equation*}\label{eq:boundaries}
    \Delta_{\Pcal,R}:=\bigcup_{\bn\in\Z^2}R^\bn
         \left(\bigcup_{a\in\A}\partial P_a\right)
         \subset \generictorus
\end{equation*}
is the set of points whose orbits under the toral $\Z^2$-rotation $R$ intersect
the boundary of the topological partition
$\Pcal=\{P_a\}_{a\in\A}$.
From the Baire Category Theorem \cite[Theorem 6.1.24]{MR1369092}, the set
$\generictorus\setminus\Delta_{\Pcal,R}$ is dense in $\generictorus$.

For every starting point 
$\bx\in \generictorus\setminus \Delta_{\Pcal,R}$, 
the coding of its orbit under the
toral $\Z^2$-rotation $R$ is a 2-dimensional configuration:
\[
\begin{array}{rccl}
    \scConfig^{\Pcal,R}_{\bx}:& \Z\times\Z &\to & \A \\
    &\bn &\mapsto & a \quad\text{ if and only if }\quad R^\bn(\bx)\in P_a.
\end{array}
\]
Thus it defines a map
\[
\begin{array}{rccl}
    \scSymbRep:& \generictorus\setminus \Delta_{\Pcal,R} &\to & \A^{\Z^2} \\
      &\bx &\mapsto& \scConfig^{\Pcal,R}_{\bx}.
\end{array}
\]
The map $\scSymbRep$
can not be extended continuously on $\generictorus$.
Up to some choice to be made, it can still be extended to the whole domain
$\generictorus$.
Recall that for interval exchange transformations, one way to deal with this
issue is to consider two copies $x^-$ and $x^+$ for each discontinuity point
\cite{MR0357739}.
Here we use this idea in order to
 extend $\scSymbRep$ on the whole domain $\generictorus$ by
approaching any point from a chosen direction.
Not all directions work, so we need some care to formalize this properly.
Let $\Theta^\Pcal$ with $\{\zero\}\subseteq\Theta^\Pcal\subset\R^2$ be the set of vectors
parallel to a segment included in the boundary 
of some atom $P_a\in\Pcal$.
If all atoms have curved boundaries, then $\Theta^\Pcal=\{\zero\}$.
If the atoms are polygons like in this article,
then the set $\Theta^\Pcal$ contains nonzero directions.
In any case, we assume that $\R\Theta^\Pcal=\Theta^\Pcal$.
For every $\bv\in\R^2\setminus\Theta^\Pcal$ we define 
\[
\begin{array}{rccl}
    \scSymbRep^\bv:& \generictorus &\to & \A^{\Z^2} \\
    &\bx &\mapsto& \lim_{\epsilon\to\zero}\scSymbRep (\bx+\epsilon\cdot\bv).
\end{array}
\]

We say that the configuration $\scSymbRep(\bx)= \scConfig^{\Pcal,R}_{\bx}$ is \defn{generic}
if $\bx\in\generictorus\setminus \Delta_{\Pcal,R}$
and that $\scSymbRep^\bv(\bx)$ is \defn{singular} if
$\bx\in\Delta_{\Pcal,R}$ for some $\bv\in\R^2\setminus\Theta^\Pcal$.
The choice of direction $\bv$ is not so important since the topological closure
of the range of $\scSymbRep^\bv$ does not depend on $\bv$ as shown in the next
lemma. In other words, singular configurations are limits of generic configurations
and $\Xcal_{\Pcal,R}$ is equal to the topological closure of the range of
$\scSymbRep$.

\begin{lemma}\label{lem:closure-of-tilings}
    For every
$\bv\in\R^2\setminus\Theta^\Pcal$, the following equalities hold
\[
    \overline{\scSymbRep^\bv(\generictorus)} 
    = \overline{\scSymbRep(\generictorus\setminus \Delta_{\Pcal,R})}
    = \Xcal_{\Pcal,R}
\]
where $\Xcal_{\Pcal,R}$ is the symbolic dynamical system corresponding to
    $\Pcal,R$.
\end{lemma}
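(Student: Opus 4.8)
The plan is to prove the two asserted equalities by establishing the cyclic chain of inclusions
\[
\overline{\scSymbRep(\generictorus\setminus \Delta_{\Pcal,R})}
\;\subseteq\;
\overline{\scSymbRep^\bv(\generictorus)}
\;\subseteq\;
\Xcal_{\Pcal,R}
\;\subseteq\;
\overline{\scSymbRep(\generictorus\setminus \Delta_{\Pcal,R})},
\]
which forces all three sets to coincide. I will use repeatedly that $\Xcal_{\Pcal,R}$ is closed (being a subshift), so that to place a closure inside $\Xcal_{\Pcal,R}$ it suffices to place the underlying set inside it. I would treat the bottom two inclusions first, since they already yield $\overline{\scSymbRep(\generictorus\setminus \Delta_{\Pcal,R})}=\Xcal_{\Pcal,R}$, and then splice $\overline{\scSymbRep^\bv(\generictorus)}$ into the picture.

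For $\Xcal_{\Pcal,R}\subseteq\overline{\scSymbRep(\generictorus\setminus \Delta_{\Pcal,R})}$, fix $w\in\Xcal_{\Pcal,R}$. For each $n\geq 0$ the restriction of $w$ to the ball $\{\bk:\Vert\bk\Vert\leq n\}$ is an allowed pattern, so $D_n(w)=\bigcap_{\Vert\bk\Vert\leq n}R^{-\bk}(P_{w_\bk})$ is a nonempty open subset of $\generictorus$. Since $\generictorus\setminus\Delta_{\Pcal,R}$ is dense by the Baire Category Theorem, I can choose a generic point $\bx_n\in D_n(w)\cap(\generictorus\setminus\Delta_{\Pcal,R})$; then $R^\bk(\bx_n)\in P_{w_\bk}$ for all $\Vert\bk\Vert\leq n$, so $\scConfig^{\Pcal,R}_{\bx_n}$ agrees with $w$ on that ball, whence $\scConfig^{\Pcal,R}_{\bx_n}\to w$ in the product topology and $w\in\overline{\scSymbRep(\generictorus\setminus\Delta_{\Pcal,R})}$. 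The reverse inclusion is the routine observation that any finite subpattern $w|_S$ of a generic coding $w=\scConfig^{\Pcal,R}_\bx$ is allowed, since $\bx\in\bigcap_{\bk\in S}R^{-\bk}(P_{w_\bk})\neq\varnothing$; hence $w\in\Xcal_{\Pcal,R}$, and closedness gives $\overline{\scSymbRep(\generictorus\setminus\Delta_{\Pcal,R})}\subseteq\Xcal_{\Pcal,R}$.

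It remains to insert $\overline{\scSymbRep^\bv(\generictorus)}$. The inclusion $\overline{\scSymbRep(\generictorus\setminus\Delta_{\Pcal,R})}\subseteq\overline{\scSymbRep^\bv(\generictorus)}$ follows once I check that $\scSymbRep^\bv$ extends $\scSymbRep$: if $\bx\notin\Delta_{\Pcal,R}$ then each $R^\bn(\bx)$ lies in an open atom $\interior{P_a}$, so $R^\bn(\bx)+\epsilon\bv\in P_a$ for all small $\epsilon>0$ and $\scSymbRep^\bv(\bx)=\scSymbRep(\bx)$. For the last inclusion $\overline{\scSymbRep^\bv(\generictorus)}\subseteq\Xcal_{\Pcal,R}$, I must show $\scSymbRep^\bv(\bx)\in\Xcal_{\Pcal,R}$ for every $\bx$; by closedness it suffices to exhibit $\scSymbRep^\bv(\bx)$ as a limit of generic codings. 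Because $R^\bn(\bx+\epsilon\bv)=R^\bn(\bx)+\epsilon\bv$, the symbol at position $\bn$ is determined by the location of $R^\bn(\bx)+\epsilon\bv$ relative to $\Pcal$, and the hypothesis $\bv\notin\Theta^\Pcal$ (so $\bv$ is parallel to no boundary segment) ensures that for each $\bn$ the ray $\{R^\bn(\bx)+\epsilon\bv:\epsilon>0\}$ enters a single open atom for all sufficiently small $\epsilon$; this makes the defining limit exist coordinatewise, hence in the product topology. Moreover the line $\{\bx+\epsilon\bv:\epsilon\in\R\}$, being parallel to none of the countably many boundary segments constituting $\Delta_{\Pcal,R}$, meets $\Delta_{\Pcal,R}$ for at most countably many parameters $\epsilon$; so I can pick $\epsilon_k\downarrow 0$ with $\bx+\epsilon_k\bv$ generic, and the codings $\scSymbRep(\bx+\epsilon_k\bv)\in\Xcal_{\Pcal,R}$ converge to $\scSymbRep^\bv(\bx)$, placing it in $\Xcal_{\Pcal,R}$.

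I expect the main obstacle to be this final step: verifying that $\scSymbRep^\bv$ is genuinely well defined and is a limit of generic codings, where the geometric role of $\bv\notin\Theta^\Pcal$ is essential. One must argue that when $R^\bn(\bx)$ lands on an edge or at a vertex of $\Pcal$, a direction $\bv$ transverse to every boundary segment selects a unique atom as $\epsilon\to 0^+$ (for an edge, $\bv$ lies strictly on one side; for a vertex, the short segment in direction $\bv$ falls in exactly one of the finitely many angular sectors meeting there), while simultaneously such transverse lines avoid $\Delta_{\Pcal,R}$ off a countable set of parameters. The other inclusions are comparatively soft, resting only on the definition of the allowed language $\Lcal_{\Pcal,R}$, the decreasing nonempty sets $D_n(w)$, and the density of $\generictorus\setminus\Delta_{\Pcal,R}$.
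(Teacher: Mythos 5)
Your proof is correct and follows essentially the same route as the paper: identify $\scSymbRep^\bv$ with $\scSymbRep$ on the dense set of generic points, realize singular codings as limits of generic ones by extracting parameters $\epsilon_k\downarrow 0$ avoiding $\Delta_{\Pcal,R}$, and match $\overline{\scSymbRep(\generictorus\setminus\Delta_{\Pcal,R})}$ with $\Xcal_{\Pcal,R}$ through the allowed language. The only cosmetic difference is that where the paper invokes uniqueness of the subshift with language $\Lcal_{\Pcal,R}$, you unpack that step into the explicit approximation via the nonempty open sets $D_n(w)$ and Baire density, which is the same content made self-contained.
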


\begin{proof}
    ($\supseteq$) 
    If $\bx\in\generictorus\setminus \Delta_{\Pcal,R}$, then
    $\scSymbRep(\bx)=\scSymbRep^\bv(\bx)$.
    Thus $\scSymbRep(\generictorus\setminus \Delta_{\Pcal,R})
     =\scSymbRep^\bv(\generictorus\setminus \Delta_{\Pcal,R})$.
    Then
    \[
    \overline{\scSymbRep(\generictorus\setminus \Delta_{\Pcal,R})}
     =\overline{\scSymbRep^\bv(\generictorus\setminus \Delta_{\Pcal,R})}
     \subseteq \overline{\scSymbRep^\bv(\generictorus)}.
     \]

    ($\subseteq$)
    Let $w\in\scSymbRep^\bv(\Delta_{\Pcal,R})$.
    Then $w=\lim_{\epsilon\to\zero}\scSymbRep (\bx+\epsilon\bv)$ for some
    $\bx\in \Delta_{\Pcal,R}$. We may extract a subsequence
    $\left(\scSymbRep (\bx+\epsilon_n\bv)\right)_{n\in\N}$
    with $\epsilon_n\in\R$
    such that $\bx+\epsilon_n\bv\in\generictorus\setminus \Delta_{\Pcal,R}$ for all $n\in\N$.
    This implies that
    $w\in\overline{\scSymbRep(\generictorus\setminus \Delta_{\Pcal,R})}$.
    Therefore
    $\scSymbRep^\bv(\Delta_{\Pcal,R})\subseteq
    \overline{\scSymbRep(\generictorus\setminus \Delta_{\Pcal,R})}$.
    We obtain
    \begin{align*}
    \overline{\scSymbRep^\bv(\generictorus)} 
        &=
    \overline{\scSymbRep^\bv(\generictorus\setminus \Delta_{\Pcal,R}) \cup 
              \scSymbRep^\bv(\Delta_{\Pcal,R})} \\
        &\subseteq
    \overline{\scSymbRep(\generictorus\setminus \Delta_{\Pcal,R})}
    \end{align*}
    which proves the first equality.

    Recall that the collection of all allowed patterns for $\Pcal,R$ is the
    language $\Lcal_{\Pcal,R}$.
    The set 
    $\overline{\scSymbRep(\generictorus\setminus \Delta_{\Pcal,R})}$
    is a subshift and contains $\Lcal_{\Pcal,R}$.
    Moreover the language of
    $\overline{\scSymbRep(\generictorus\setminus \Delta_{\Pcal,R})}$
    is contained in $\Lcal_{\Pcal,R}$.
    The equality
    $\overline{\scSymbRep(\generictorus\setminus \Delta_{\Pcal,R})}
    = \Xcal_{\Pcal,R}$
    follows since
    the symbolic dynamical system $\Xcal_{\Pcal,R}$ is 
    the unique subshift whose language is $\Lcal_{\Pcal,R}$.
\end{proof}

\begin{lemma}\label{lem:tiling-one-to-one}
    Let $\Pcal$ give a symbolic representation of the dynamical system
    $(\generictorus,\Z^2,R)$
    and let $\bv\in\R^2\setminus\Theta^\Pcal$.
    Then $\scSymbRep:\generictorus\setminus\Delta_{\Pcal,R}\to\Xcal_{\Pcal,R}$ 
    and $\scSymbRep^\bv:\generictorus\to\Xcal_{\Pcal,R}$ 
    are one-to-one.
    Moreover, the following diagrams commute:
\[ 
\begin{tikzcd}
    \generictorus\setminus\Delta_{\Pcal,R} \arrow{r}{R^\bk} 
    \arrow[swap]{d}{\scSymbRep} & 
    \generictorus\setminus\Delta_{\Pcal,R} \arrow{d}{\scSymbRep} \\%
    \Xcal_{\Pcal,R} \arrow{r}{\sigma^\bk} & \Xcal_{\Pcal,R}
\end{tikzcd}
\qquad
    \text{ and }
\qquad
\begin{tikzcd}
    \generictorus \arrow{r}{R^\bk} 
    \arrow[swap]{d}{\scSymbRep^\bv} & \generictorus \arrow{d}{\scSymbRep^\bv} \\%
    \Xcal_{\Pcal,R} \arrow{r}{\sigma^\bk} & \Xcal_{\Pcal,R}
\end{tikzcd}
\]
for every $\bk\in\Z^2$.
\end{lemma}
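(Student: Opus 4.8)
The plan is to dispatch the two commuting diagrams first, since they are formal, and then to prove both injectivity statements by the same device: showing that every point of $\generictorus$ lies in the one-point intersection $\bigcap_{n\geq0}\overline{D}_n(w)$ guaranteed by the hypothesis that $\Pcal$ gives a symbolic representation. For the left diagram I would fix $\bx\in\generictorus\setminus\Delta_{\Pcal,R}$ and $\bk\in\Z^2$, note that $\Delta_{\Pcal,R}$ is $R$-invariant so that $R^\bk\bx$ remains in the domain, and use the action law $R^{\bn}R^{\bk}=R^{\bn+\bk}$ to compute, for every $\bn\in\Z^2$ and $a\in\A$,
\[
    \big(\scSymbRep(R^\bk\bx)\big)_\bn=a
    \iff R^\bn(R^\bk\bx)\in P_a
    \iff R^{\bn+\bk}(\bx)\in P_a
    \iff \big(\scSymbRep(\bx)\big)_{\bn+\bk}=a,
\]
the last quantity being $\big(\sigma^\bk\scSymbRep(\bx)\big)_\bn$ by the definition of the shift. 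For the right diagram I would use that $R^\bk$ is a translation of $\generictorus$, so $R^\bk\bx+\epsilon\bv=R^\bk(\bx+\epsilon\bv)$, apply the identity just obtained along the generic approach, and pass the continuous map $\sigma^\bk$ through the limit:
\[
    \scSymbRep^\bv(R^\bk\bx)
    =\lim_{\epsilon\to\zero}\scSymbRep\big(R^\bk(\bx+\epsilon\bv)\big)
    =\lim_{\epsilon\to\zero}\sigma^\bk\scSymbRep(\bx+\epsilon\bv)
    =\sigma^\bk\scSymbRep^\bv(\bx).
\]

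For the injectivity of $\scSymbRep$ on $\generictorus\setminus\Delta_{\Pcal,R}$, I would take two generic points $\bx,\by$ with $w:=\scSymbRep(\bx)=\scSymbRep(\by)$. By the definition of $\scConfig^{\Pcal,R}_\bx$, for every $\bk$ one has $R^\bk(\bx)\in P_{w_\bk}$, i.e.\ $\bx\in R^{-\bk}(P_{w_\bk})$; intersecting over $\Vert\bk\Vert\leq n$ gives $\bx\in D_n(w)\subseteq\overline{D}_n(w)$ for every $n$, hence $\bx\in\bigcap_{n\geq0}\overline{D}_n(w)$, and likewise for $\by$. Since this intersection is a single point, $\bx=\by$.

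The injectivity of $\scSymbRep^\bv$ on all of $\generictorus$ is the substantive case, because a boundary point $\bx\in\Delta_{\Pcal,R}$ need not lie in any open atom. Given $\bx,\by$ with $w:=\scSymbRep^\bv(\bx)=\scSymbRep^\bv(\by)$, I would fix $n$ and, for each $\bk$ with $\Vert\bk\Vert\leq n$, use the definition of $\scSymbRep^\bv$ together with $\bv\notin\Theta^\Pcal$ to produce $\epsilon_\bk>0$ such that $R^\bk(\bx)+\epsilon\bv\in P_{w_\bk}$ for all $0<\epsilon<\epsilon_\bk$: the ray leaves the point $R^\bk(\bx)$ transversally to every boundary segment and therefore enters a single open atom, namely the stabilized value $w_\bk$. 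Setting $\epsilon_0=\min_{\Vert\bk\Vert\leq n}\epsilon_\bk>0$, which is positive because the index set is finite, I get $R^\bk(\bx+\epsilon\bv)=R^\bk(\bx)+\epsilon\bv\in P_{w_\bk}$ simultaneously for all $\Vert\bk\Vert\leq n$ whenever $0<\epsilon<\epsilon_0$, that is $\bx+\epsilon\bv\in D_n(w)$. Letting $\epsilon\to0^+$ gives $\bx\in\overline{D_n(w)}=\overline{D}_n(w)$, and since this holds for every $n$ we again land in the one-point set $\bigcap_{n\geq0}\overline{D}_n(w)$, forcing $\bx=\by$.

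The step I expect to be the main obstacle is precisely this last placement of $\bx$ in the closure of the \emph{finite intersection} $D_n(w)$, rather than in the generally larger set $\bigcap_{\Vert\bk\Vert\leq n}\overline{R^{-\bk}(P_{w_\bk})}$ that one obtains by passing to limits coordinate by coordinate. Overcoming it is exactly where the transversality hypothesis $\bv\notin\Theta^\Pcal$ and the finiteness of $\{\bk:\Vert\bk\Vert\leq n\}$ are needed, since together they let all relevant coordinates be moved into their open atoms along a single common segment $\{\bx+\epsilon\bv:0<\epsilon<\epsilon_0\}$.
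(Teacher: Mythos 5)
Your proof is correct and follows essentially the same route as the paper's: both reduce injectivity to the fact that $\scSymbRep^\bv(\bx)=w$ forces $\bx\in\bigcap_{n\geq0}\overline{D}_n(w)$, which is a singleton by the symbolic-representation hypothesis, and both verify the commuting diagrams by the same coordinatewise computation. The only difference is that the paper simply asserts the membership $\bx\in\bigcap_{n\geq0}\overline{D}_n(w)$ for singular points, whereas you supply the transversality-plus-finiteness argument placing $\bx$ in $\overline{D_n(w)}$ itself — a worthwhile detail, correctly executed.
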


\begin{proof}
    The fact that $w=\scSymbRep^\bv(\bx)$ implies that
    $\bx\in\cap_{n=0}^{\infty}\overline{D}_n(w)$. Therefore, if
    $\scSymbRep^\bv(\bx)=\scSymbRep^\bv(\by)=w$ 
    then $\bx,\by\in\cap_{n=0}^{\infty}\overline{D}_n(w)$.
    Since $\Pcal$ gives a symbolic representation of the dynamical system
    $(\generictorus,\Z^2,R)$,
    the set $\cap_{n=0}^{\infty}\overline{D}_n(w)$ contains at most one element,
    and
    it implies that $\bx=\by$. Thus, $\scSymbRep^\bv$ is one-to-one.
    As $\scSymbRep^\bv$ and $\scSymbRep$ agree on
    $\generictorus\setminus\Delta_{\Pcal,R}$, we also have that $\scSymbRep$ is
    one-to-one.

We now show conjugacy of $\Z^2$-actions.
Let $\bk\in\Z^2$, $\bx\in\generictorus\setminus\Delta_{\Pcal,R}$
and $\bn\in\Z^2$. 
We have
\begin{align*}
    (\sigma^\bk\circ\scSymbRep(\bx))(\bn)
    &=(\sigma^\bk\circ\scConfig^{\Pcal,R}_{\bx})(\bn)
    =\scConfig^{\Pcal,R}_{\bx}(\bn+\bk)\\
    &= \scConfig^{\Pcal,R}_{R^\bk\bx}(\bn)
    =(\scSymbRep(R^\bk\bx))(\bn)\\
    &=(\scSymbRep\circ R^\bk(\bx))(\bn).
\end{align*}
Therefore $\sigma^\bk\circ\scSymbRep =\scSymbRep\circ R^\bk$.
The conjugacy of $\Z^2$-actions by the map $\scSymbRep$ also extends to $\scSymbRep^\bv$.
\end{proof}
    
The fact that $\scSymbRep^\bv$ is one-to-one means that it admits a left-inverse
map $f:\scSymbRep^\bv(\generictorus)\to\generictorus$ such that
$f\circ\scSymbRep^\bv=\mathrm{Id}_{\generictorus}$.
But we can say more and define the map $f$ on
the closure $\overline{\scSymbRep^\bv(\generictorus)}=\Xcal_{\Pcal,R}$.
Indeed, if $\Pcal$ gives a symbolic representation of the 
dynamical system $(\generictorus,\Z^2,R)$, then there is a well-defined function
$f$ from $\Xcal_{\Pcal,R}$ to $\generictorus$ which maps a configuration
$w\in\Xcal_{\Pcal,R}\subset\A^{\Z^2}$ to the unique point
$f(w)\in\generictorus$ in the intersection
$\cap_{n=0}^{\infty}\overline{D}_n(w)$. We consider the map $f$ in the next
section.

\section{A factor map from symbolic representations to the 2-torus}

In the spirit of \cite[Prop. 6.5.8]{MR1369092},
the following result shows that there exists a continuous and onto homomorphism and
therefore a factor map from $(\Xcal_{\Pcal,R},\Z^2,\sigma)$ to
$(\generictorus,\Z^2,R)$.


\begin{proposition}\label{prop:factor-map}
    Let $\Pcal$ give a symbolic representation of the dynamical system
    $(\generictorus,\Z^2,R)$.
    Let $f:\Xcal_{\Pcal,R}\to\generictorus$ be defined 
    such that $f(w)$ is the unique point
    in the intersection $\cap_{n=0}^{\infty}\overline{D}_n(w)$.
    The map $f$ is a factor map from
            $(\Xcal_{\Pcal,R},\Z^2,\sigma)$ to $(\generictorus,\Z^2,R)$
    which makes the following diagram commute
    \[ 
    \begin{tikzcd}
        \Xcal_{\Pcal,R} \arrow{r}{\sigma^\bk} 
        \arrow[swap]{d}{f} & \Xcal_{\Pcal,R} \arrow{d}{f} \\%
        \generictorus \arrow{r}{R^\bk} & \generictorus
    \end{tikzcd}
    \]
    for every $\bk\in\Z^2$.
    The map $f$ is one-to-one on
    $f^{-1}(\generictorus\setminus\Delta_{\Pcal,R})$.
\end{proposition}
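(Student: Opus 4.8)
The plan is to verify the four required properties in turn: that $f$ is well-defined and continuous, that it is onto, that it commutes with the two $\Z^2$-actions (the commuting diagram), and finally that it is one-to-one over the complement of $\Delta_{\Pcal,R}$. The first point is essentially handed to us by the hypothesis: since $\Pcal$ gives a symbolic representation, the intersection $\cap_{n=0}^{\infty}\overline{D}_n(w)$ is a singleton for every $w\in\Xcal_{\Pcal,R}$, so the assignment $w\mapsto f(w)$ makes sense. The content to check is continuity. I would argue that if $w,w'\in\Xcal_{\Pcal,R}$ agree on a large ball $\{\bk:\Vert\bk\Vert\leq n\}$, then $\overline{D}_n(w)=\overline{D}_n(w')$, so $f(w)$ and $f(w')$ both lie in this common compact set; since the nested intersection shrinks to a point, the diameter of $\overline{D}_n(w)$ tends to $0$ uniformly enough to force $\dist(f(w),f(w'))$ small. (Compactness of $\generictorus$ together with the fact that the closures decrease to singletons gives the needed uniform control.)

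For the commuting diagram I would compute directly. Fix $\bk\in\Z^2$ and $w\in\Xcal_{\Pcal,R}$, and let $\bx=f(w)$, so $\bx\in\cap_{n}\overline{D}_n(w)\subseteq R^{-\bn}\overline{P_{w_\bn}}$ for all $\bn$. Applying $R^\bk$ and reindexing, one sees $R^\bk\bx\in R^{-\bn}\overline{P_{(\sigma^\bk w)_\bn}}$ for every $\bn$, hence $R^\bk\bx\in\cap_n\overline{D}_n(\sigma^\bk w)$; since that intersection is the singleton $\{f(\sigma^\bk w)\}$, we get $R^\bk(f(w))=f(\sigma^\bk w)$, which is exactly $R^\bk\circ f=f\circ\sigma^\bk$. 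Ontoness then follows almost for free: given any $\bx\in\generictorus$, pick any $\bv\in\R^2\setminus\Theta^\Pcal$ and set $w=\scSymbRep^\bv(\bx)$; by construction $\bx\in\cap_n\overline{D}_n(w)$, so $f(w)=\bx$. Being continuous and onto and intertwining the actions, $f$ is the desired factor map.

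The last clause is where the real care is needed. I want to show $f$ is one-to-one on $f^{-1}(\generictorus\setminus\Delta_{\Pcal,R})$. Suppose $f(w)=f(w')=\bx$ with $\bx\notin\Delta_{\Pcal,R}$. Then for every $\bn\in\Z^2$ the point $R^\bn\bx$ avoids $\bigcup_a\partial P_a$, so it lies in the interior of a \emph{unique} atom; call its index $a(\bn)$. On one hand $\bx\in\cap_n\overline{D}_n(w)$ forces $R^\bn\bx\in\overline{P_{w_\bn}}$, and since $R^\bn\bx\in\interior{P_{a(\bn)}}$ and the atoms have disjoint interiors, the only way $\overline{P_{w_\bn}}$ can contain an interior point of $P_{a(\bn)}$ is $w_\bn=a(\bn)$. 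The same argument gives $w'_\bn=a(\bn)$. Hence $w_\bn=w'_\bn$ for all $\bn$, i.e.\ $w=w'$. In fact this identifies $w$ with $\scConfig^{\Pcal,R}_{\bx}=\scSymbRep(\bx)$.

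I expect the main obstacle to be the continuity argument in the first paragraph: turning ``the nested compact sets intersect in a single point'' into a genuine modulus of continuity for $f$. The subtlety is that a priori the diameter of $\overline{D}_n(w)$ could shrink to $0$ at a rate depending on $w$, which would not immediately yield continuity. I would resolve this by a compactness argument on $\Xcal_{\Pcal,R}$: fix $\varepsilon>0$, and for each $w$ choose $n(w)$ with $\mathrm{diam}\,\overline{D}_{n(w)}(w)<\varepsilon$; the cylinder determined by $w$ on $\{\Vert\bk\Vert\leq n(w)\}$ is an open neighborhood of $w$ on which $f$ varies by less than $\varepsilon$, and since $\Xcal_{\Pcal,R}$ is compact these cylinders admit a finite subcover, producing a uniform $n$ and hence continuity. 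The remaining steps are essentially bookkeeping with the inclusions defining $D_n$.
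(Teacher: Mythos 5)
Your proposal is correct and follows essentially the same route as the paper: pointwise continuity from the shrinking diameters of $\overline{D}_n(w)$, surjectivity via $\scSymbRep^\bv$ (which is literally the paper's own ``alternate proof''), the same reindexing computation for $R^\bk\circ f=f\circ\sigma^\bk$, and the same interior-point argument for injectivity over $\generictorus\setminus\Delta_{\Pcal,R}$. The only deviation is your final compactness detour, which is unnecessary: letting $n$ depend on $w$ already gives continuity at each point, which is all that is required, and your finite-subcover step merely upgrades this to uniform continuity (automatic anyway on a compact domain).
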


\begin{proof}
    Let $\Pcal=\{P_0,P_1,\dots,P_{r-1}\}$.
    We show that the map $f$ is continuous. Let $\varepsilon>0$.
    Let $w\in\Xcal_{\Pcal,R}$.
    Since the partition gives a symbolic representation, there exists $n\in\N$
    such that the diameter of
    \[
        \overline{D}_n(w)=
        \bigcap_{\Vert\bk\Vert\leq n} R^{-\bk}
        \left(\overline{P_{w_\bk}}\right)
    \]
    is smaller than or equal to $\varepsilon$.
    That set contains $\left(\cap_{n=0}^{\infty}\overline{D}_n(w)\right)
    \cup \left(\cap_{n=0}^{\infty}\overline{D}_n(w')\right)$
    if $w'\in\Xcal_{\Pcal,R}$ is such that $\dist_{\Xcal_{\Pcal,R}}(w,w')<\frac{1}{2^n}$.
    We conclude that if $\dist_{\Xcal_{\Pcal,R}}(w,w')<\frac{1}{2^n}$, then
    $\dist_\generictorus(f(w),f(w'))<\varepsilon$ which means that $f$ is
    continuous.

    We show that the map $f$ is onto.
    Let $\bx\in\generictorus\setminus\Delta_{\Pcal,R}$
    and $w=\scSymbRep(\bx)$.
    Then $\bx\in\cap_{n=0}^{\infty}\overline{D}_n(w)$.
    Since $\Pcal$ gives a symbolic representation of $(\generictorus,\Z^2,R)$,
    we have that
    \[
        \{\bx\} 
        =\bigcap_{n=0}^{\infty}\overline{D}_n(w)
        =\bigcap_{n=0}^{\infty}D_n(w),
    \]
    so that $f(w)=\bx$. Thus the image of $f$ contains the dense set
    $\generictorus\setminus\Delta_{\Pcal,R}$. Since the image of a compact set
    via a continuous map is compact and therefore closed, it follows that the
    image of $f$ is all of $\generictorus$.

    An alternate proof that $f$ is onto uses $\scSymbRep^\bv$.
    Let $\bx\in\generictorus$
    and $w=\scSymbRep^\bv(\bx)$ for some $\bv\in\R^2\setminus\Theta^\Pcal$.
    We have that $\cap_{n=0}^{\infty}\overline{D}_n(w)=\{\bx\}$.
    Therefore, $f(w)=\bx$ and $f$ is onto.

    We show that the map $f$ is a homomorphism:
    \begin{align*}
    R^\bk \{f(w)\} &= R^\bk \left(\bigcap_{n=0}^{\infty}\overline{D}_n(w)\right)
        = R^\bk 
        \bigcap_{\bn\in\Z^2} R^{-\bn}
        \overline{P_{w_\bn}}
        = 
        \bigcap_{\bn\in\Z^2} R^{-(\bn-\bk)}
        \overline{P_{\sigma^\bk w_{\bn-\bk}}}\\
        &= 
        \bigcap_{\bm\in\Z^2} R^{-\bm}
        \overline{P_{\sigma^\bk w_{\bm}}}
        = \bigcap_{n=0}^{\infty}\overline{D}_n(\sigma^\bk w)
        = \{f(\sigma^\bk w)\}
    \end{align*}
    where $\bm=\bn-\bk$. Therefore $R^\bk\circ f=f\circ\sigma^\bk$ for every
    $\bk\in\Z^2$ and $f:\Xcal_{\Pcal,R}\to\generictorus$ is a factor map.

    We show that $f$ is one-to-one on $f^{-1}(\generictorus\setminus\Delta_{\Pcal,R})$.
    Let $\bx\in\generictorus\setminus\Delta_{\Pcal,R}$ and suppose
    that $w,w'\in f^{-1}(\bx)$.
    This means that $\cap_{n=0}^{\infty}\overline{D}_n(w)
                    =\cap_{n=0}^{\infty}\overline{D}_n(w')=\{\bx\}$.
    Therefore for every $\bn\in\Z^2$ we have
    \[
        \bx\in
        \left(
        R^{-\bn} \overline{P_{w_\bn}}
        \right)^\circ
        \cap
        \left(
        R^{-\bn} \overline{P_{w'_\bn}}
        \right)^\circ.
    \]
    Then $w_\bn=w'_\bn$ for every $\bn\in\Z^2$ and $w=w'$.
    Therefore 
    for every
    $\bx\in\generictorus\setminus\Delta_{\Pcal,R}$,
    $f^{-1}(\bx)$ contains exactly one element.
\end{proof}

As mentioned in Remark~\ref{rem:non-minimal},
it is possible that $\Xcal_{\Pcal,R}$ is not minimal. But 
as shown in the next lemma, it is minimal if $R$ is minimal.

\begin{lemma}\label{lem:minimal-aperiodic}
    Let $\Pcal$ give a symbolic representation of the dynamical system
    $(\generictorus,\Z^2,R)$. Then
\begin{enumerate}[\rm (i)]
    \item if $(\generictorus,\Z^2,R)$ is minimal,
        then $(\Xcal_{\Pcal,R},\Z^2,\sigma)$ is minimal,
    \item if $R$ is a free $\Z^2$-action on $\generictorus$,
        then $\Xcal_{\Pcal,R}$ aperiodic.
\end{enumerate}
\end{lemma}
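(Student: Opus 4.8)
The plan is to derive both statements from the factor map $f:\Xcal_{\Pcal,R}\to\generictorus$ constructed in Proposition~\ref{prop:factor-map}, together with its behaviour on fibers over $\generictorus\setminus\Delta_{\Pcal,R}$ and the description of $\Xcal_{\Pcal,R}$ as the closure of the range of $\scSymbRep$ from Lemma~\ref{lem:closure-of-tilings}. Both parts reduce to transporting a dynamical property of $R$ along the commuting diagram $R^\bk\circ f=f\circ\sigma^\bk$.

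For (i), I would argue that $\Xcal_{\Pcal,R}$ has no proper nonempty closed $\sigma$-invariant subset. So let $Y\subseteq\Xcal_{\Pcal,R}$ be nonempty, closed and $\sigma$-invariant. Its image $f(Y)$ is nonempty; it is compact, hence closed, because $f$ is continuous and $Y$ is compact; and it is $R$-invariant because $f$ intertwines $\sigma$ and $R$, so that $R^\bk(f(Y))=f(\sigma^\bk(Y))=f(Y)$ for every $\bk\in\Z^2$. Minimality of $(\generictorus,\Z^2,R)$ then forces $f(Y)=\generictorus$, and in particular $f(Y)\supseteq\generictorus\setminus\Delta_{\Pcal,R}$. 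Now fix $\bx\in\generictorus\setminus\Delta_{\Pcal,R}$. By the last assertion of Proposition~\ref{prop:factor-map}, $f$ is one-to-one over $\generictorus\setminus\Delta_{\Pcal,R}$, and the onto part of that proof identifies the unique preimage of $\bx$ as the generic configuration $\scSymbRep(\bx)$. Since some element of $Y$ maps to $\bx$, that element must be $\scSymbRep(\bx)$, whence $\scSymbRep(\bx)\in Y$. Thus $Y$ contains every generic configuration, and taking closures together with Lemma~\ref{lem:closure-of-tilings} yields $Y\supseteq\overline{\scSymbRep(\generictorus\setminus\Delta_{\Pcal,R})}=\Xcal_{\Pcal,R}$, so $Y=\Xcal_{\Pcal,R}$ and $(\Xcal_{\Pcal,R},\Z^2,\sigma)$ is minimal.

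For (ii), suppose $w\in\Xcal_{\Pcal,R}$ satisfies $\sigma^\bn(w)=w$ for some $\bn\in\Z^2$. Applying $f$ and using $f\circ\sigma^\bn=R^\bn\circ f$ gives $R^\bn(f(w))=f(\sigma^\bn(w))=f(w)$, so $R^\bn$ fixes the point $f(w)\in\generictorus$. Since $R$ is a free $\Z^2$-action, this forces $\bn=\zero$; hence $\sigma$ acts freely on $\Xcal_{\Pcal,R}$, i.e., the subshift is aperiodic.

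The only delicate step is the fiber identification in (i): one must be sure that the element of $Y$ lying over a given $\bx\in\generictorus\setminus\Delta_{\Pcal,R}$ is precisely the generic configuration $\scSymbRep(\bx)$ rather than merely some preimage. This is where single-valuedness of $f$ over $\generictorus\setminus\Delta_{\Pcal,R}$ is essential, and it is exactly what makes the generic configurations accumulate to all of $\Xcal_{\Pcal,R}$. Part (ii) is immediate once the commuting diagram is in hand, so I expect no real obstacle there.
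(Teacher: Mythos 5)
Your proof is correct and follows essentially the same route as the paper: pushing a minimal set forward through the factor map $f$, using that fibers over $\generictorus\setminus\Delta_{\Pcal,R}$ are the singletons $\{\scSymbRep(\bx)\}$, and invoking Lemma~\ref{lem:closure-of-tilings} for (i), and the commuting relation $f\circ\sigma^\bk=R^\bk\circ f$ together with freeness of $R$ for (ii). No gaps.
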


\begin{proof}
    Let $f:\Xcal_{\Pcal,R}\to\generictorus$ be the factor map
    from Proposition~\ref{prop:factor-map}.

    (i)
    Let $Y\subseteq \Xcal_{\Pcal,R}$ be a nonempty subshift. Thus
    $Y$ is compact. Continuous map preserve compact sets, thus $f(Y)$ is
    compact.  The set $f(Y)$ is also $R$-invariant since
    $R^\bk f(Y)=f(\sigma^\bk Y)=f(Y)$ for every $\bk\in\Z^2$.
    Since $(\generictorus,\Z^2,R)$ is minimal, the only nonempty
    compact subset of $\generictorus$ which is invariant under $R$ is
    $\generictorus$. Thus $f(Y)=\generictorus$.

    For every $x\in\generictorus$, $f^{-1}(x)\cap Y\neq\varnothing$.
    Then $Y$ contains $\scSymbRep(x)$ for every $x\in\generictorus$
    such that $f^{-1}(x)$ is a singleton.
    Then $Y$ contains $\scSymbRep(\generictorus\setminus \Delta_{\Pcal,R})$.
    Since $Y$ is closed, it must contain
    $\overline{\scSymbRep(\generictorus\setminus \Delta_{\Pcal,R})}$.
    From Lemma~\ref{lem:closure-of-tilings}, this means that
    $\Xcal_{\Pcal,R}\subseteq Y$.
    Thus $Y=\Xcal_{\Pcal,R}$ and $\Xcal_{\Pcal,R}$ is minimal.

    (ii)
    Suppose that there exists $w\in \Xcal_{\Pcal,R}$ such that 
    $w$ is periodic, i.e., there exists $\bk\in\Z^2$ such that
    $\sigma^\bk w=w$.
    Since $f$ commutes the $\Z^2$-actions, we obtain
    \[
        R^\bk f(w)
        =f(\sigma^\bk w)
        =f(w).
    \]
    Since we assume that $R$ is a free $\Z^2$-action, this implies
    that $\bk=\zero$.
    Thus $\Xcal_{\Pcal,R}$ is aperiodic.
\end{proof}

We can now deduce a corollary of Proposition~\ref{prop:factor-map}.

\begin{corollary}\label{cor:max-equicontinuous-factor}
    If the dynamical system $(\generictorus,\Z^2,R)$ is minimal and
    $\Pcal$ gives a symbolic representation of $(\generictorus,\Z^2,R)$,
    then $(\generictorus, \Z^2,R)$ is the maximal equicontinuous factor of
    $(\Xcal_{\Pcal,R},\Z^2,\sigma)$.
\end{corollary}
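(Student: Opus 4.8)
The plan is to invoke Lemma~\ref{lem:ABKL15-lemma3.11}, which reduces the claim to assembling three ingredients already at hand: that the symbolic system $(\Xcal_{\Pcal,R},\Z^2,\sigma)$ is minimal, that $(\generictorus,\Z^2,R)$ is an equicontinuous factor of it, and that the factor map has at least one singleton fiber. None of these requires new work beyond a short verification of equicontinuity.

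First I would check that $(\generictorus,\Z^2,R)$ is equicontinuous. Each $R^\bn$ acts on $\generictorus=\R^2/\Gamma$ by the translation $\bx\mapsto\bx+n_1\balpha+n_2\bbeta$, which is an isometry for the flat metric inherited from $\R^2$; hence $\dist(R^\bn\bx,R^\bn\by)=\dist(\bx,\by)$ for every $\bn\in\Z^2$, and given $\varepsilon>0$ the choice $\delta=\varepsilon$ witnesses equicontinuity uniformly in $\bn$. This is the elementary direction of the correspondence between equicontinuous Abelian actions and rotations on groups recalled just before Lemma~\ref{lem:ABKL15-lemma3.11}.

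Next, since $(\generictorus,\Z^2,R)$ is assumed minimal and $\Pcal$ gives a symbolic representation, Lemma~\ref{lem:minimal-aperiodic}(i) shows that $(\Xcal_{\Pcal,R},\Z^2,\sigma)$ is minimal. Proposition~\ref{prop:factor-map} then supplies the factor map $f:\Xcal_{\Pcal,R}\to\generictorus$ intertwining $\sigma$ and $R$, so $(\generictorus,\Z^2,R)$ is an equicontinuous factor of the minimal system $(\Xcal_{\Pcal,R},\Z^2,\sigma)$. Finally, the same proposition asserts that $f$ is one-to-one on $f^{-1}(\generictorus\setminus\Delta_{\Pcal,R})$; since $\generictorus\setminus\Delta_{\Pcal,R}$ is dense (hence nonempty) by the Baire Category Theorem, choosing any point $y$ in it gives $\card(f^{-1}(y))=1$.

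With $X=\Xcal_{\Pcal,R}$ minimal, $Y=\generictorus$ equicontinuous, and $\pi=f$ a factor map admitting a singleton fiber, Lemma~\ref{lem:ABKL15-lemma3.11} applies directly and yields that $(\generictorus,\Z^2,R)$ is the maximal equicontinuous factor of $(\Xcal_{\Pcal,R},\Z^2,\sigma)$. I do not expect a real obstacle here: the corollary is essentially a repackaging of earlier results, and the only self-contained step is the equicontinuity of the toroidal rotation, which is routine. The substantive content was discharged in Proposition~\ref{prop:factor-map}, which produces the factor map together with its almost one-to-one property, and in Lemma~\ref{lem:minimal-aperiodic}, which transports minimality across that map.
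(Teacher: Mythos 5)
Your proposal is correct and follows essentially the same route as the paper: minimality of $\Xcal_{\Pcal,R}$ via Lemma~\ref{lem:minimal-aperiodic}(i), the factor map and its singleton fibers over $\generictorus\setminus\Delta_{\Pcal,R}$ from Proposition~\ref{prop:factor-map}, and then Lemma~\ref{lem:ABKL15-lemma3.11}. The only difference is that you spell out the (routine) equicontinuity of the toroidal rotation and the nonemptiness of $\generictorus\setminus\Delta_{\Pcal,R}$, which the paper leaves implicit.
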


\begin{proof}
    From Lemma~\ref{lem:minimal-aperiodic} (i),
    $(\Xcal_{\Pcal,R},\Z^2,\sigma)$ is minimal.
    The dynamical system $(\generictorus, \Z^2,R)$ is equicontinuous.
    We proved in Proposition~\ref{prop:factor-map} that the factor map
    $f$ is one-to-one on $f^{-1}(\generictorus\setminus\Delta_{\Pcal,R})$.
    In particular, there exists
    at least one element $\by\in\generictorus\setminus\Delta_{\Pcal,R}$
    such that $\card(f^{-1}(\by))=1$.
    From Lemma~\ref{lem:ABKL15-lemma3.11},
    $(\generictorus, \Z^2,R)$ is the maximal
    equicontinuous factor of $(\Xcal_{\Pcal,R},\Z^2,\sigma)$.
\end{proof}

\begin{remark}
    There are some more consequences.
From Proposition~\ref{prop:factor-map}, we deduce that
\[
\lambda\left(\{\bx\in\generictorus:\card(f^{-1}(\bx))>1\}\right)
\leq\lambda\left(\Delta_{\Pcal,R}\right) =0
\]
where $\lambda$ be the Haar measure on $\generictorus$.
From \cite{fuhrmann_structure_2018}, 
this implies that $(\Xcal_{\Pcal,R},\Z^2,\sigma)$
is a \emph{regular} extension of $(\generictorus,\Z^2,R)$
and that $(\Xcal_{\Pcal,R},\Z^2,\sigma)$ is \emph{mean equicontinuous}
which has more structural consequences including 
the fact of having \emph{discrete spectrum with continuous eigenfunctions}.
We refer the reader to \cite{fuhrmann_structure_2018} for the definitions
of regular extension and mean equicontinuous.
\end{remark}

\section{An isomorphism between symbolic dynamical systems and toral $\Z^2$-rotations}

Let $X\subset\A^{\Z^2}$ be a subshift.
Recall that for any subset $S\subset\Z^2$, $\pi_S:X\to\A^S$ is
the projection map which restricts every $w\in X$ to $S$. 
To every finite pattern $p\in\A^S$ correspond
a cylinder $[p]=\pi_S^{-1}(p)\subset X$.
The set of all cylinders
\[
    \{[p] \mid p\in\A^S \text{ with }S\subset\Z^2\text{ finite}\}
\]
generates the Borel $\sigma$-algebra on $X$.

Let $\Pcal=\{P_a\}_{a\in\A}$ give a symbolic representation of the dynamical
system $(\generictorus,\Z^2,R)$
and
let $f:\Xcal_{\Pcal,R}\to\generictorus$ be the factor map
from Proposition~\ref{prop:factor-map}.
For each $a\in\A$, we have that
\[
f([a])=\overline{P_a}\subset\generictorus
\]
is a closed set.
Thus the image of a cylinder $[p]$ under $f$ for some finite pattern
$p\in\A^S$ is a closed set 
called \defn{coding region} for the pattern $p$
being the finite intersection of closed sets:
\begin{equation}\label{eq:cylindre-intersection-polygones}
    f\left([p]\right) = 
    \bigcap_{\bn\in S} R^{-\bn}
    \overline{P_{p_\bn}}\subset\generictorus.
\end{equation}

The following proposition can be seen as an explicit construction
of a strictly ergodic symbolic dynamical system isomorphic 
to the $\Z^2$-rotation $R$ on the torus $\generictorus$
as established by the Theorem of Jewett and Krieger \cite{denker_strictly_1976}
for one-dimensional dynamical systems and generalized to $\Z^2$-actions by Rosenthal
\cite{MR931867}.

\begin{proposition}\label{prop:measurable-conjugacy}
    Let $\Pcal$ give a symbolic representation of a minimal dynamical system
    $(\generictorus,\Z^2,R)$.
    Suppose that $\lambda(\partial P)=0$ for each atom $P\in\Pcal$
    where $\lambda$ is the Haar measure on $\generictorus$.
    Then the dynamical system $(\Xcal_{\Pcal,R},\Z^2,\sigma)$ is strictly
    ergodic and
    the measure-preserving dynamical system $(\Xcal_{\Pcal,R},\Z^2,\sigma,\nu)$
    is isomorphic 
    to $(\generictorus,\Z^2,R,\lambda)$ 
    where $\nu$ is the unique shift-invariant probability measure on
    $\Xcal_{\Pcal,R}$.
\end{proposition}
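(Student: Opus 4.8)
The plan is to handle minimality, unique ergodicity, and the measurable isomorphism in turn, using the factor map $f\colon\Xcal_{\Pcal,R}\to\generictorus$ of Proposition~\ref{prop:factor-map} as the backbone. Minimality of $(\Xcal_{\Pcal,R},\Z^2,\sigma)$ is already granted by Lemma~\ref{lem:minimal-aperiodic}(i), so only unique ergodicity (which together with minimality yields strict ergodicity) and the isomorphism remain. The first preliminary step is to upgrade the hypothesis $\lambda(\partial P)=0$ to $\lambda(\Delta_{\Pcal,R})=0$: since each $R^\bn$ is a rotation it preserves the Haar measure $\lambda$, so $\lambda(R^\bn\partial P_a)=\lambda(\partial P_a)=0$, and $\Delta_{\Pcal,R}$ is a countable union of such null sets over $\bn\in\Z^2$ and $a\in\A$. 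Consequently $Y_0:=\generictorus\setminus\Delta_{\Pcal,R}$ has full Haar measure, is $R$-invariant (since $\Delta_{\Pcal,R}$ is manifestly $R$-invariant), and $f$ restricts to a bijection of $X_0:=f^{-1}(Y_0)$ onto $Y_0$ by the injectivity statement of Proposition~\ref{prop:factor-map}, with inverse the coding map $\scSymbRep|_{Y_0}$ (cf.\ Lemma~\ref{lem:tiling-one-to-one}).

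For unique ergodicity I would first note existence of an invariant probability measure (compactness of the subshift together with amenability of $\Z^2$), and then prove uniqueness. Let $\mu$ be any $\sigma$-invariant probability measure on $\Xcal_{\Pcal,R}$. Pushing forward by the factor map gives an $R$-invariant measure $f_*\mu$ on $\generictorus$; since $(\generictorus,\Z^2,R)$ is a minimal rotation on a compact abelian group it is uniquely ergodic with $\lambda$ its only invariant measure, hence $f_*\mu=\lambda$. In particular $\mu(\Xcal_{\Pcal,R}\setminus X_0)=f_*\mu(\Delta_{\Pcal,R})=\lambda(\Delta_{\Pcal,R})=0$, so $\mu$ is concentrated on $X_0$, where $f$ is a bi-measurable bijection onto $Y_0$. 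For a Borel set $B\subseteq\Xcal_{\Pcal,R}$ this forces
\[
  \mu(B)=\mu(B\cap X_0)=\lambda\bigl(f(B\cap X_0)\bigr),
\]
an expression depending only on $\lambda$; hence $\mu$ is unique, namely $\mu=\nu=(\scSymbRep)_*\lambda$.

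With $\nu$ identified, the measurable isomorphism is essentially $f$ itself. I would take $X_0$ and $Y_0$ as above (both of full measure and invariant under the respective actions, the invariance of $X_0$ following from $f\circ\sigma^\bk=R^\bk\circ f$ and the invariance of $Y_0$), and set $\phi:=f|_{X_0}\colon X_0\to Y_0$. This $\phi$ is one-to-one and onto, measurable as a restriction of the continuous $f$, with measurable inverse $\scSymbRep|_{Y_0}$: the preimage of a cylinder under $\scSymbRep$ is of the form $R^{-\bn}(P_a)\cap Y_0$, a Borel set, and cylinders generate the Borel $\sigma$-algebra. The commuting diagram of Proposition~\ref{prop:factor-map} gives $\phi\circ\sigma^\bk=R^\bk\circ\phi$, and $\nu(\phi^{-1}(A))=\nu(f^{-1}(A))=f_*\nu(A)=\lambda(A)$ for every Borel $A\subseteq Y_0$, since $f^{-1}(A)\subseteq X_0$ and $f_*\nu=\lambda$. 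These are exactly the conditions in the definition of isomorphism, so $(\Xcal_{\Pcal,R},\Z^2,\sigma,\nu)$ and $(\generictorus,\Z^2,R,\lambda)$ are isomorphic.

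The step I expect to be the main obstacle is the uniqueness half of unique ergodicity: one must argue cleanly that every invariant measure is carried by the injectivity set $X_0$ and is therefore pinned down by its projection $\lambda$. This rests on two facts that deserve care, namely that $f_*\mu=\lambda$ forces $\mu$ to vanish on $f^{-1}(\Delta_{\Pcal,R})$, and that the coding map $\scSymbRep$ furnishes a genuinely measurable inverse to $f$ on $Y_0$, so that $f(B\cap X_0)$ is measurable and the displayed identity is legitimate. Everything else (minimality, the intertwining of the actions, and measure preservation) is either quoted from earlier results or a direct verification.
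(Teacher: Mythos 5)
Your proof is correct, and it shares with the paper the pivotal observation that unique ergodicity of the rotation forces $f_\star\mu=\lambda$ for every invariant $\mu$; but the way you extract uniqueness of $\mu$ from this is genuinely different. The paper never passes to the injectivity set: it works cylinder by cylinder, sandwiching $Z=[p]$ between $f^{-1}\bigl((f(Z))^\circ\bigr)$ and $f^{-1}\bigl(f(Z)\bigr)$, and then uses the hypothesis $\lambda(\partial P)=0$ only to conclude that the coding region $f(Z)$ (an explicit finite intersection of closed sets, by Equation~\eqref{eq:cylindre-intersection-polygones}) has $\lambda$-null boundary, so that $\nu(Z)=\lambda(f(Z))$ for every cylinder. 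This keeps all measurability questions trivial, since only preimages under the continuous $f$ of closed sets and their interiors appear. You instead upgrade the hypothesis to $\lambda(\Delta_{\Pcal,R})=0$, show every invariant measure is carried by $X_0=f^{-1}(\generictorus\setminus\Delta_{\Pcal,R})$, and transport $\lambda$ back through the bijection $f|_{X_0}$; the price is that you must justify that $f(B\cap X_0)$ is measurable for arbitrary Borel $B$, which you do correctly by exhibiting $\scSymbRep|_{Y_0}$ as a Borel-measurable inverse (its preimages of cylinders are the Borel sets $R^{-\bn}(P_a)\cap Y_0$). Your route has the advantage of making the isomorphism statement fall out immediately from the same sets $X_0$, $Y_0$ — a point the paper's proof leaves largely implicit after announcing that $f$ provides the isomorphism — whereas the paper's cylinder computation has the advantage of requiring no inverse map and of directly producing the useful formula $\nu([p])=\lambda(f([p]))$ for pattern frequencies, which is exploited later in the article.
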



\begin{proof}
    We prove that the factor map
    $f:\Xcal_{\Pcal,R}\to\generictorus$ from
    Proposition~\ref{prop:factor-map} provides the isomorphism.
    The map $f$ is measurable as $f$ is continuous and $f^{-1}(K)$ is compact
    for any compact subset $K\subset\generictorus$.
    Let $\lambda$ be the Haar measure on $\generictorus$.
    By hypothesis, $(\generictorus,\Z^2,R)$ is minimal.
    It is also strictly ergodic \cite{MR648108}
    with $\lambda$ being the only $R$-invariant probability measure on
    $\generictorus$.

    Since $\sigma$ is continuous and
    $\Xcal_{\Pcal,R}$ is a compact metric space,
    the set $\mathcal{M}^\sigma(\Xcal_{\Pcal,R})$ 
    of $\sigma$-invariant
    probability measures on $\Xcal_{\Pcal,R}$ is nonempty
    \cite[Cor.~6.9.1]{MR648108}.
    Thus let $\nu\in\mathcal{M}^\sigma(\Xcal_{\Pcal,R})$.
    Let $Z=[p]\subset \Xcal_{\Pcal,R}$ be the cylinder corresponding to some
    pattern $p\in\A^S$ for some finite subset $S\subset\Z^2$.
    From Equation~\eqref{eq:cylindre-intersection-polygones}
    we know that
    $f\left(Z\right)$ is a closed set being the intersection of
    a finite number of closed sets.
    Closed sets as well as their interior are both measurable for the Haar
    measure $\lambda$. Continuity of $f$ implies that
    $f^{-1}\left( f\left(Z\right)\right)$ 
    and
    $f^{-1}\left(f\left(Z\right)^\circ\right)$ 
    are both measurable for $\nu$.

    For each letter $a\in\A$, we have 
    $f^{-1}(f([a])^\circ)\subset[a]$.
    Thus we have
    \[
    f^{-1}\left(f\left(Z\right)^\circ\right)
    \subset
    Z
    \subset
    f^{-1}\left( f\left(Z\right)\right)
    \]
    so that
    \[
    \nu(f^{-1}\left(f\left(Z\right)^\circ\right))
    \leq
    \nu(Z)
    \leq
    \nu(f^{-1}\left( f\left(Z\right)\right)).
    \]
    Let $f_\star$ be the pushforward map
    \[
    \begin{array}{rccl}
        f_\star: &\mathcal{M}^\sigma(\Xcal_{\Pcal,R}) & \to & 
                \mathcal{M}^{R}(\generictorus) \\
        &\nu & \mapsto & \nu\circ  f^{-1}
    \end{array}
    \]
    which maps shift-invariant measures on $\Xcal_{\Pcal,R}$
    to $R$-invariant measures on $\generictorus$.
    But there is only one such measure, so that $f_\star\nu=\lambda$ for every
    $\nu\in\mathcal{M}^\sigma(\Xcal_{\Pcal,R})$.  For every
    $\nu\in\mathcal{M}^\sigma(\Xcal_{\Pcal,R})$, we have for the left-hand
    side
    \[
    \nu( f^{-1}\left(f\left(Z\right)^\circ\right))
    =
    f_\star\nu\left(f\left(Z\right)^\circ\right)
    =
    \lambda\left(f\left(Z\right)^\circ\right)
    \]
    and for the right-hand side
    \[
    \nu( f^{-1}\left( f\left(Z\right)\right))
    =
    f_\star\nu\left( f\left(Z\right)\right)
    =
    \lambda\left( f\left(Z\right)\right).
    \]
    As the boundary of $f(Z)$ is a $\lambda$-null set, we obtain
    \[
    \lambda\left( f\left(Z\right)\right)
    =
    \lambda\left(f\left(Z\right)^\circ\right)
    \leq
    \nu(Z)
    \leq
    \lambda\left( f\left(Z\right)\right)
    \]
    and we conclude that
    \[
    \nu(Z)
    =
    \lambda\left( f\left(Z\right)\right).
    \]
    Since measures are defined from the measure of 
    cylinders which generate the Borel $\sigma$-algebra, we conclude that
    there is a unique shift-invariant probability measure on $\Xcal_{\Pcal,R}$.
    Thus $\Xcal_{\Pcal,R}$ is uniquely ergodic and therefore strictly ergodic
    since minimality of $\Xcal_{\Pcal,R}$ was proved in
    Lemma~\ref{lem:minimal-aperiodic}.
\end{proof}

Proposition~\ref{prop:measurable-conjugacy} implies uniform pattern frequencies for
configurations in $\Xcal_{\Pcal,R}$.
It also means that the
symbolic dynamical system $\Xcal_{\Pcal,R}$ is an almost one-to-one extension of a Kronecker
dynamical system (a rotation action on a compact Abelian group)
and from Von Neumann's Theorem \cite[Theorem 3.9]{MR2590264},
it implies that $\Xcal_{\Pcal,R}$ has discrete spectrum.
See also \cite{MR2369449} for a treatment of Von Neumann's
Theorem in the context of tiling dynamical systems.

\part{Wang shifts as codings of toral $\Z^2$-rotations}\label{part:2}

This part is divided into 5 sections.
After introducing Wang tiles and Wang shifts,
we present a generic method for constructing sets of Wang tiles
and valid configurations in the associated Wang shift as codings of
toral $\Z^2$-rotations. We illustrate the method 
on Jeandel-Rao's set of 11 Wang tiles
and on a self-similar set of 19 Wang tiles.
We expose the limitations of the method by presenting two ``non-examples''.

\section{Wang shifts}

A \defn{Wang tile} $\tau=\tile{$a$}{$b$}{$c$}{$d$}$ is a unit square with colored edges
formally represented as a tuple of four colors $(a,b,c,d)\in I\times J\times
I\times J$
where $I$, $J$ are
two finite sets (the vertical and horizontal colors respectively). 
For each Wang tile $\tau=(a,b,c,d)$, let
$\scright(\tau)=a$,
$\sctop(\tau)=b$,
$\scleft(\tau)=c$,
$\scbottom(\tau)=d$
denote the colors of the right, top, left and bottom edges of $\tau$
\cite{wang_proving_1961,MR0297572}.

Let $\T$ be a set of Wang tiles. 
A configuration $x:\Z^2\to\T$ is \defn{valid} if
it assigns tiles to each position of $\Z^2$ so that contiguous edges
have the same color, that is,
\begin{align}
    \scright(x_{\bn})&=\scleft(x_{\bn+\be_1})\label{eq:1}\\
    \sctop(x_{\bn})&=\scbottom(x_{\bn+\be_2})\label{eq:2}
\end{align}
for every $\bn\in\Z^2$ where $\be_1=(1,0)$ and $\be_2=(0,1)$.
Let $\Omega_\T\subset\T^{\Z^2}$ denote the set of all valid configurations $\Z^2\to\T$
and we call it the \defn{Wang shift} of $\T$. 
Together with the shift action $\sigma$ of $\Z^2$ on $\T^{\Z^2}$,
$\Omega_\T$ is a SFT of the form \eqref{eq:SFT}
since there exists a finite set of
forbidden patterns made of all horizontal and vertical dominoes of two tiles
that do not share an edge of the same color.


A set of Wang tiles $\T$ is \defn{periodic} if there exists a periodic configuration
$x\in\Omega_\T$. 
Originally, Wang thought that every set of Wang tiles $\T$ is periodic 
as soon as $\Omega_\T$ is nonempty \cite{wang_proving_1961}.
This statement is equivalent to the existence of an algorithm 
solving the \emph{domino problem}, that is, taking as input a set of Wang tiles
and returning \textit{yes} or \textit{no} whether there exists a valid
configuration with these tiles. 
Berger, a student of Wang, later proved that the domino problem is undecidable
and he also provided a first example of an aperiodic set of Wang tiles
\cite{MR0216954}.
A set of Wang tiles $\T$ is \defn{aperiodic} if
the Wang shift $\Omega_\T$ is a nonempty aperiodic subshift.
This means that in general one can not decide the emptiness of a Wang shift
$\Omega_\T$. This illustrates that the behavior of $d$-dimensional SFTs when $d\geq2$
is much different than the one-dimensional case where emptiness of a SFT is
decidable \cite{MR1369092}.
Note that another important difference between $d=1$ and $d\geq2$
is expressed in terms of the possible values of entropy of $d$-dimensional SFTs,
see \cite{MR2680402}.

\section{From toral partitions and $\Z^2$-rotations to Wang shifts}

We consider the $2$-torus $\generictorus=\R^2/\Gamma$ 
where $\Gamma$ is a lattice in $\R^2$.
We suppose that $(\generictorus,\Z^2,R)$ is a dynamical system
where $R$ is a toral $\Z^2$-rotation.
Let
$\mathcal{Y}=\{Y_i\}_{i\in I}$,
$\mathcal{Z}=\{Z_j\}_{j\in J}$ 
be two finite topological partitions of $\generictorus$.
For each $(i,j,k,\ell)\in I\times J\times I\times J$
we define the intersection of 4 atoms in the following way
\[
    P_{(i,j,k,\ell)} = Y_i \cap Z_j
                  \cap R^{\be_1}(Y_k)
                  \cap R^{\be_2}(Z_\ell)
\]
where $\be_1=(1,0)$ and $\be_2=(0,1)$.
The quadruples $\tau$ for which the intersection $P_\tau$ is nonempty
define a set
\[
    \T = \left\{\tau\in I\times J\times I\times J
                  \mid P_\tau \neq\varnothing\right\}
\]
that we see as a set of Wang tiles. Naturally, this comes with
a topological partition
\[
    \Pcal = \{P_\tau\}_{\tau\in\T}
\]
of $\generictorus$ which is the refinement of the four partitions
$\mathcal{Y}$ (the right color), 
$\mathcal{Z}$ (the top color),
$R^{\be_1}(\mathcal{Y})$ (the left color) and 
$R^{\be_2}(\mathcal{Z})$ (the bottom color).
Thus to each $\bx\in \generictorus\setminus\Delta_{\Pcal,R}$
corresponds a unique Wang tile, that is, a right, a top, a left and a bottom
color according to which atom it belongs in each of the four partitions.

\begin{proposition}\label{prop:is_wang_tiling}
    Let $(\generictorus,\Z^2,R)$ be a dynamical system
where $R$ is a $\Z^2$-rotation and
let $\mathcal{Y}$ and $\mathcal{Z}$ 
be two finite topological partitions of $\generictorus$.
Let $\Pcal=
\mathcal{Y}\wedge
\mathcal{Z}\wedge
R^{\be_1}(\mathcal{Y})\wedge
R^{\be_2}(\mathcal{Z})$ be the refinement of four partitions.
Let $\T$ be the set of Wang tiles defined above as the set of quadruples $\tau$
such that $P_\tau$ is a nonempty atom of the partition $\Pcal$.
Then $\Xcal_{\Pcal,R}$ is a subshift of the Wang shift $\Omega_\T$.
\end{proposition}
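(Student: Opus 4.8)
The plan is to work directly from the defining data of the two objects. On one side, $\Omega_\T$ is the SFT obtained by forbidding every horizontal domino whose right and left colors disagree and every vertical domino whose top and bottom colors disagree; on the other side, $\Xcal_{\Pcal,R}$ is by construction the unique subshift whose language is the set $\Lcal_{\Pcal,R}$ of allowed patterns. Since a configuration lies in $\Omega_\T$ exactly when none of its dominoes is forbidden, and since every domino occurring in a configuration of $\Xcal_{\Pcal,R}$ is an allowed pattern, it suffices to prove that every allowed horizontal (resp.\ vertical) domino already satisfies the horizontal (resp.\ vertical) matching rule. The inclusion $\Xcal_{\Pcal,R}\subseteq\Omega_\T$ follows at once.

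First I would treat the horizontal case. Let $\tau=(i,j,k,\ell)$ and $\tau'=(i',j',k',\ell')$ be tiles of $\T$ forming an allowed domino on $S=\{\zero,\be_1\}$, which by definition means $P_\tau\cap R^{-\be_1}(P_{\tau'})\neq\varnothing$. Choose any point $\bx$ in this (open) intersection. From $\bx\in P_\tau\subseteq Y_i$ we read $\bx\in Y_i$, while from $R^{\be_1}(\bx)\in P_{\tau'}\subseteq R^{\be_1}(Y_{k'})$ we read $\bx\in Y_{k'}$. Hence $\bx\in Y_i\cap Y_{k'}$, and because the atoms of the topological partition $\mathcal{Y}$ are pairwise disjoint, this forces $i=k'$, that is $\scright(\tau)=\scleft(\tau')$. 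The key observation here, and the whole point of defining the left color through $R^{\be_1}(\mathcal{Y})$ rather than through $\mathcal{Y}$ itself, is that the right color at a cell and the left color at the cell immediately to its right are both reading the $\mathcal{Y}$-atom containing the same point of the torus.

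The vertical case is entirely symmetric, with $\mathcal{Z}$ and $\be_2$ in place of $\mathcal{Y}$ and $\be_1$. An allowed domino $(\tau,\tau'')$ on $\{\zero,\be_2\}$, with $\tau''=(i'',j'',k'',\ell'')$, means $P_\tau\cap R^{-\be_2}(P_{\tau''})\neq\varnothing$; a point $\bx$ in this intersection satisfies $\bx\in P_\tau\subseteq Z_j$ and $R^{\be_2}(\bx)\in P_{\tau''}\subseteq R^{\be_2}(Z_{\ell''})$, so $\bx\in Z_j\cap Z_{\ell''}$, and disjointness of the $\mathcal{Z}$-atoms gives $j=\ell''$, i.e.\ $\sctop(\tau)=\scbottom(\tau'')$. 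Combining the two cases, every configuration $w\in\Xcal_{\Pcal,R}$ has only matching horizontal and vertical dominoes and therefore belongs to $\Omega_\T$.

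There is essentially no hard step: the argument is a direct verification, and the only thing requiring care is the bookkeeping of $R^{\be_1}$ versus $R^{-\be_1}$ (and likewise for $\be_2$). No boundary or genericity considerations enter, since we use only the nonemptiness of an intersection of open atoms; in particular one does not need to pass through Lemma~\ref{lem:closure-of-tilings}. Alternatively, one could establish the inclusion for the generic configurations $\scConfig^{\Pcal,R}_{\bx}$ with $\bx\notin\Delta_{\Pcal,R}$, reading the two matching rules off the single orbit point $R^\bn(\bx)$ at each cell $\bn$, and then invoke Lemma~\ref{lem:closure-of-tilings} together with the closedness of $\Omega_\T$ to extend the inclusion to the closure; but the domino argument above is shorter and avoids that detour.
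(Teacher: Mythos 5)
Your proof is correct, and it takes a genuinely different route from the paper's. The paper verifies the two matching equations pointwise along the orbit of a generic point $\bx\in\generictorus\setminus\Delta_{\Pcal,R}$ (reading that $R^\bn(\bx)\in Y_i$ forces both $\scright(w_\bn)=i$ and $\scleft(w_{\bn+\be_1})=i$), concludes $\scSymbRep(\generictorus\setminus\Delta_{\Pcal,R})\subseteq\Omega_\T$, and then needs the closedness of $\Omega_\T$ together with Lemma~\ref{lem:closure-of-tilings} to pass to $\Xcal_{\Pcal,R}=\overline{\scSymbRep(\generictorus\setminus\Delta_{\Pcal,R})}$ — exactly the ``detour'' you mention at the end. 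You instead argue at the level of the language: an allowed domino on $\{\zero,\be_1\}$ gives a point in $P_\tau\cap R^{-\be_1}(P_{\tau'})$, and disjointness of the atoms of $\mathcal{Y}$ forces $\scright(\tau)=\scleft(\tau')$ (likewise for $\mathcal{Z}$ and $\be_2$). Since every domino occurring in any configuration of $\Xcal_{\Pcal,R}$ lies in $\Lcal_{\Pcal,R}$ and $\Omega_\T$ is exactly the SFT defined by the domino matching rules, the inclusion follows for all configurations at once, singular ones included, with no closure argument. What your approach buys is the elimination of the limit step and a proof that matches the finite-type structure of $\Omega_\T$ head-on; what the paper's approach buys is that the same computation simultaneously exhibits the explicit coding $\scConfig^{\Pcal,R}_{\bx}$ as a valid Wang configuration, which is the picture reused throughout Part~\ref{part:2}. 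Your bookkeeping of $R^{\be_1}$ versus $R^{-\be_1}$ is consistent with the paper's conventions ($\scleft(\tau)=k$ read off $R^{\be_1}(Y_k)$), so the identification $i=k'$ is exactly the right one.
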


\begin{proof}
    Let $\bx\in \generictorus\setminus\Delta_{\Pcal,R}$
    and $w=\scSymbRep(\bx)$.
    Let $\bn\in\Z^2$. 
    First we check that Equation~\eqref{eq:1} is satisfied.
    There exists $i\in I$ such that 
    $R^\bn(\bx)\in Y_i$.
    Equivalently, $R^{\bn+\be_1}(\bx)\in R^{\be_1}(Y_i)$.
    Thus we have
    \begin{align*}
        \scright(w_\bn)
        &=\scright(\scConfig^{\Pcal,R}_{\bx}(\bn))
        = i\\
        &=\scleft(\scConfig^{\Pcal,R}_{\bx}(\bn+\be_1))
        =\scleft(w_{\bn+\be_1}).
    \end{align*}
    Similarly we check that Equation~\eqref{eq:2} is satisfied.
    There exists $j\in J$ such that 
    $R^\bn(\bx)\in Z_j$.
    Equivalently, $R^{\bn+\be_2}(\bx)\in R^{\be_2}(Z_j)$.
    Thus we have
    \begin{align*}
        \sctop(w_\bn)
        &=\sctop(\scConfig^{\Pcal,R}_{\bx}(\bn))
        = j\\
        &=\scbottom(\scConfig^{\Pcal,R}_{\bx}(\bn+\be_2))
        =\scbottom(w_{\bn+\be_2}).
    \end{align*}
Then the configuration $w$ is valid and $w\in\Omega_\T$.
Thus $\scSymbRep(\generictorus\setminus\Delta_{\Pcal,R}) \subseteq
    \Omega_\T$.

Remark that $\Omega_\T$ is closed since it is a subshift.
Therefore the topological closure of the image of $\scSymbRep$ is in the Wang
shift $\Omega_\T$. Using Lemma~\ref{lem:closure-of-tilings}, we conclude that
    \[
    \Xcal_{\Pcal,R} 
    =
    \overline{\scSymbRep(\generictorus\setminus\Delta_{\Pcal,R})}
    \subseteq \Omega_\T.\qedhere
    \]
\end{proof}

\begin{lemma}
If the refined partition $\Pcal=
\mathcal{Y}\wedge
\mathcal{Z}\wedge
R^{\be_1}(\mathcal{Y})\wedge
R^{\be_2}(\mathcal{Z})$ 
    gives a symbolic representation of $(\generictorus,\Z^2,R)$,
    then for every $\bv\in\R^2\setminus\Theta^\Pcal$, $\scSymbRep^\bv$ is a
    one-to-one map $\generictorus\to\Omega_\T$.
\end{lemma}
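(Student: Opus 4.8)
The plan is to obtain the statement as an immediate corollary of the two preceding results, since the only difference between the codomain $\Omega_\T$ and the codomain $\Xcal_{\Pcal,R}$ appearing earlier is an inclusion. First I would note that the hypothesis of the present lemma is precisely what Lemma~\ref{lem:tiling-one-to-one} requires: the refined partition $\Pcal$ gives a symbolic representation of $(\generictorus,\Z^2,R)$, and $\bv\in\R^2\setminus\Theta^\Pcal$. Applying that lemma directly yields that $\scSymbRep^\bv\colon\generictorus\to\Xcal_{\Pcal,R}$ is one-to-one. In particular, for every $\bx\in\generictorus$ the image $\scSymbRep^\bv(\bx)$ is a well-defined element of $\Xcal_{\Pcal,R}$.

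Next I would invoke Proposition~\ref{prop:is_wang_tiling}, whose standing assumptions on $\mathcal{Y}$, $\mathcal{Z}$, $R$ and the induced tile set $\T$ coincide with the ones in force here. It supplies the inclusion $\Xcal_{\Pcal,R}\subseteq\Omega_\T$. Composing the injection $\scSymbRep^\bv\colon\generictorus\to\Xcal_{\Pcal,R}$ with the inclusion $\Xcal_{\Pcal,R}\hookrightarrow\Omega_\T$ preserves injectivity, because enlarging the codomain of a one-to-one function leaves it one-to-one; no two distinct points of $\generictorus$ can acquire a common image merely by viewing that image inside the larger space $\Omega_\T$. Hence $\scSymbRep^\bv\colon\generictorus\to\Omega_\T$ is a well-defined one-to-one map.

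The only point needing care is bookkeeping rather than mathematics: one must confirm that the map written $\scSymbRep^\bv$ in the statement is literally the same map treated in Lemma~\ref{lem:tiling-one-to-one}, which it is, since both are constructed from the identical partition $\Pcal$ and direction $\bv$ via the limit $\lim_{\epsilon\to\zero}\scSymbRep(\bx+\epsilon\cdot\bv)$. I therefore expect no genuine analytic or combinatorial obstacle here; all the substance is already carried by the injectivity established in Lemma~\ref{lem:tiling-one-to-one} together with the containment $\Xcal_{\Pcal,R}\subseteq\Omega_\T$ from Proposition~\ref{prop:is_wang_tiling}, and the present lemma simply records their conjunction.
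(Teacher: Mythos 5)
Your proposal is correct and is essentially identical to the paper's own proof, which simply cites Lemma~\ref{lem:tiling-one-to-one} for injectivity into $\Xcal_{\Pcal,R}$ and Proposition~\ref{prop:is_wang_tiling} for the inclusion $\Xcal_{\Pcal,R}\subseteq\Omega_\T$. You have merely spelled out the codomain-enlargement step that the paper leaves implicit.
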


\begin{proof}
    Follows from Lemma~\ref{lem:tiling-one-to-one}
    and Proposition~\ref{prop:is_wang_tiling}.
\end{proof}

\section{Example 1: Jeandel-Rao aperiodic Wang shift}

Consider the lattice
$\Gamma_0=\langle (\varphi,0), (1,\varphi+3) \rangle_\Z$
where $\varphi=\frac{1+\sqrt{5}}{2}$.
On the torus $\R^2/\Gamma_0$, we consider the $\Z^2$-rotation
$R_0:\Z^2\times\R^2/\Gamma_0\to\R^2/\Gamma_0$
defined by
\[
    R_0^\bn(\bx):=R_0(\bn,\bx)=\bx+\bn
\]
for every $\bn\in\Z^2$.
We consider the fundamental domain $\Dcal =
[0,\varphi[\,\times\,[0,\varphi+3[$
of $\R^2$ for the group of translations~$\Gamma_0$.
Let $I=\{0,1,2,3\}$ and $J=\{0,1,2,3,4\}$ be sets of colors
and consider the partitions 
$\Ycal=\{Y_i\}_{i\in I}$ and $\Zcal=\{Z_j\}_{j\in J}$
shown in Figure~\ref{fig:jeandelrao-partition}.


\begin{figure}[h]
\begin{center}
\hspace{-3mm}
\begin{tikzpicture}[auto,scale=1.5]
\tikzstyle{every node}=[font=\footnotesize]

\def\horizontalaxis{
\draw (-.05,0) -- (\p+.05,0);
\foreach \x/\y in
{0/0, 1/1,
    2-\p  /\frac{1}{\varphi^{2}},
    \p-1  /\frac{1}{\varphi},
    \p    /\varphi}
\draw (\x,0+.05) -- (\x,0-.05) node[below] {$\y$};}
\def\silentverticalaxis{
\draw[-latex] (0,-.1) -- (0,3+\p+.2);
\foreach \x/\y in
{0/0, 1/1, 2/2,
    \p+1  /\varphi+1,
    \p+2  /\varphi+2,
    \p+3  /\varphi+3}
\draw (.05,\x) -- (-.05,\x);}

\begin{scope}[xshift=-.0cm]
    \draw[-latex] (0,-.1) -- (0,3+\p+.2);
    \draw (.05,0)    -- (-.05,0)    node[left] {$1$};
    \draw (.05,1)    -- (-.05,1)    node[left] {$2$};
    \draw (.05,2)    -- (-.05,2)    node[left] {$3$};
    \draw (.05,\p+1) -- (-.05,\p+1) node[left] {$\varphi\hspace{-1mm}+\hspace{-1mm}1$};
    \draw (.05,\p+2) -- (-.05,\p+2) node[left] {$\varphi\hspace{-1mm}+\hspace{-1mm}2$};
    \draw (.05,\p+3) -- (-.05,\p+3) node[left] {$\varphi\hspace{-1mm}+\hspace{-1mm}3$};
\end{scope}

\begin{scope}[xshift=0cm]
    \node[above,align=center] at (.8,3+\p+.1) {$\Ycal$};
    \fill[cyan] (0,0) rectangle (\p,1);
    \fill[red] (\p-1,1) -- (\p,2+\p) -- (\p,1) -- cycle;
    \fill[red] (0,1) -- (\p-1,2) -- (\p-1,1) -- cycle;
    \fill[red] (0,2) -- (0,1+\p) -- (\p-1,2+\p) -- cycle;
    \fill[green] (0,1) -- (0,2) -- (1,3+\p) -- (1,2+\p) -- (\p,3+\p) -- (\p,2+\p) -- cycle;
    \draw[thick] (0,0) rectangle (\p,3+\p);
    \draw[thick] (\p-1,2) -- (\p-1,1) -- (\p,2+\p) -- (0,1) -- (\p,1);
    \draw[thick] (0,2) -- (1,3+\p) -- (1,2+\p) -- (\p,3+\p);
    \draw[thick] (0,1+\p) -- (\p-1,2+\p);
    \horizontalaxis
    \silentverticalaxis
    \node at (0.81,0.5) {2};
    \node at (1.26,1.5) {1};
    \node at (0.37,1.3) {1};
    \node at (0.90,2.25) {0};
    \node at (0.10,2.5) {1};
    \node at (0.95,3.1) {3};
    \node at (0.29,4.1) {0};
    \node at (1.19,4.4) {0};
\end{scope}

\begin{scope}[xshift=1.8cm]
    \node[above,align=center] at (.8,3+\p+.1) {$\Zcal$};
    \fill[lightgray]  (0,0) -- (0,1) -- (0,0) -- (\p-1,1) --
                (\p-1,0) -- (1,1) --
                (1,0) -- (\p,1) -- (\p,0) -- cycle;
    \fill[cyan] (0,1) -- (0,0) -- (\p-1,1) --
                (\p-1,0) -- (1,1) --
                (1,0) -- (\p,1) -- (\p,2+\p) -- cycle;
    \fill[green] (0,1) -- (1,2+\p) -- (1,1+\p) -- cycle;
    \fill[red] (0,1) -- (0,2+\p) -- (2-\p,3+\p) -- (2-\p,2+\p) --
                        (1,3+\p) -- (1,2+\p) -- cycle;
    \fill[red] (1,1+\p) -- (1,2+\p) -- (\p,3+\p) -- (\p,2+\p) -- cycle;
    \draw[thick] (0,1) -- (0,0) -- (\p-1,1) --
                (\p-1,0) -- (1,1) --
                (1,0) -- (\p,1) -- (\p,2+\p) -- cycle;
    \draw[thick] (0,2+\p) -- (2-\p,3+\p) -- (2-\p,2+\p) --
                        (1,3+\p) -- (1,2+\p) -- (0,1);
    \draw[thick] (1,1+\p) -- (1,2+\p) -- (\p,3+\p) -- (\p,2+\p) -- cycle;
    \draw[thick] (0,0) rectangle (\p,3+\p);
    \horizontalaxis
    \silentverticalaxis
    \node at (1.32,.2) {4};
    \node at (0.84,.2) {4};
    \node at (0.37,.2) {4};
    \node at (0.81,1.5) {2};
    \node at (1.22,3.43) {1};
    \node at (0.45,3.21) {1};
    \node at (1.30,4.41) {0};
    \node at (0.64,4.41) {0};
    \node at (0.14,4.41) {0};
    \node at (0.64,2.32) {3};
\end{scope}

\begin{scope}[xshift=3.6cm]
    \node[above,align=center] at (.8,3+\p+.1) {$R_0^{\be_1}(\Ycal)$};
    \fill[cyan] (0,0) rectangle (\p,1);
    \fill[red] (0,1) -- (1,2+\p) -- (1,1) -- cycle;
    \fill[red] (1,1) -- (\p,2) -- (\p,1) -- cycle;
    \fill[red] (1,2) -- (1,1+\p) -- (\p,2+\p) -- cycle;
    \fill[green] (1,1) -- (1,2) -- (\p,2+\p) -- (\p,2) -- cycle;
    \fill[green] (0,2+\p) -- (2-\p,3+\p) -- (2-\p,2+\p) -- (1,3+\p) -- (1,2+\p) -- (0,2) -- cycle;
    \draw[thick] (0,2+\p) -- (2-\p,3+\p) -- (2-\p,2+\p) -- (1,3+\p)-- (1,2+\p) -- (0,2);
    \draw[thick] (0,1) -- (\p,1) -- (\p,2) -- (1,1) -- (1,2) 
    -- (\p,2+\p) -- (1,1+\p) -- (1,2+\p) -- cycle;
    \draw[thick] (0,0) rectangle (\p,3+\p);
    \horizontalaxis
    \silentverticalaxis
    \node at (0.81,0.5) {2};
    \node at (0.64,1.5) {1};
    \node at (1.37,1.3) {1};
    \node at (1.34,2.25) {3};
    \node at (0.29,2.20) {0};
    \node at (0.34,3.1) {3};
    \node at (1.29,4.1) {0};
    \node at (0.58,4.4) {0};
    \node at (0.16,4.4) {0};
\end{scope}

\begin{scope}[xshift=5.4cm]
    \node[above,align=center] at (.8,3+\p+.1) {$R_0^{\be_2}(\Zcal)$};
    \fill[red]  (0,0) -- (0,1) -- (0,0) -- (\p-1,1) --
                (\p-1,0) -- (1,1) --
                (1,0) -- (\p,1) -- (\p,0) -- cycle;
    \fill[lightgray]  (0,1) -- (0,2) -- (0,1) -- (\p-1,2) --
                (\p-1,1) -- (1,2) --
                (1,1) -- (\p,2) -- (\p,1) -- cycle;
    \fill[cyan] (0,2) -- (0,1) -- (\p-1,2) --
                (\p-1,1) -- (1,2) --
                (1,1) -- (\p,2) -- (\p,3+\p) -- cycle;
    \fill[green] (0,2) -- (1,3+\p) -- (1,2+\p) -- cycle;
    \fill[red] (0,2) -- (0,3+\p) -- (1,3+\p) -- cycle;
    \fill[red] (1,2+\p) -- (1,3+\p) -- (\p,3+\p) -- cycle;
    \draw[thick]  (0,0) -- (0,1) -- (0,0) -- (\p-1,1) --
                (\p-1,0) -- (1,1) --
                (1,0) -- (\p,1);
    \draw[thick] (0,2) -- (0,1) -- (\p-1,2) --
                (\p-1,1) -- (1,2) --
                (1,1) -- (\p,2) -- (\p,3+\p) -- cycle;
    \draw[thick] (0,2) -- (1,3+\p) -- (1,2+\p);
    \draw[thick] (0,1) -- (\p,1);
    \draw[thick] (0,0) rectangle (\p,3+\p);
    \horizontalaxis
    \silentverticalaxis
    \node at (1.32,0.2) {1};
    \node at (0.84,0.2) {1};
    \node at (0.37,0.2) {1};
    \node at (1.32,1.2) {4};
    \node at (0.84,1.2) {4};
    \node at (0.37,1.2) {4};
    \node at (1.13,2.8) {2};
    \node at (1.22,4.43) {1};
    \node at (0.45,4.21) {1};
    \node at (1.30,0.79) {0};
    \node at (0.79,0.79) {0};
    \node at (0.34,0.79) {0};
    \node at (0.64,3.32) {3};
\end{scope}

\begin{scope}[xshift=7.2cm]
    \node[above,align=center] at (.8,3+\p+.1) {$\Pcal_0$};

    \draw[thick] (0,2+\p) -- (2-\p,3+\p) -- (2-\p,2+\p) -- (1,3+\p)-- (1,2+\p) -- (0,2);
    \draw[thick] (0,1) -- (\p,1) -- (\p,2) -- (1,1) -- (1,2) 
    -- (\p,2+\p) -- (1,1+\p) -- (1,2+\p) -- cycle;
    \draw[thick] (0,0) rectangle (\p,3+\p);
    \draw[thick] (\p-1,2) -- (\p-1,1) -- (\p,2+\p) -- (0,1) -- (\p,1);
    \draw[thick] (0,2) -- (1,3+\p) -- (1,2+\p) -- (\p,3+\p);
    \draw[thick] (0,1+\p) -- (\p-1,2+\p);
    \draw[thick] (0,1) -- (0,0) -- (\p-1,1) --
                (\p-1,0) -- (1,1) --
                (1,0) -- (\p,1) -- (\p,2+\p) -- cycle;
    \draw[thick] (0,2+\p) -- (2-\p,3+\p) -- (2-\p,2+\p) --
                        (1,3+\p) -- (1,2+\p) -- (0,1);
    \draw[thick] (1,1+\p) -- (1,2+\p) -- (\p,3+\p) -- (\p,2+\p) -- cycle;
    \draw[thick]  (0,0) -- (0,1) -- (0,0) -- (\p-1,1) --
                (\p-1,0) -- (1,1) --
                (1,0) -- (\p,1);
    \draw[thick] (0,2) -- (0,1) -- (\p-1,2) --
                (\p-1,1) -- (1,2) --
                (1,1) -- (\p,2) -- (\p,3+\p) -- cycle;
    \draw[thick] (0,2) -- (1,3+\p) -- (1,2+\p);
    \draw[thick] (0,0) rectangle (\p,3+\p);

    \horizontalaxis
    \silentverticalaxis

\node at (0.14,4.36) {$t_6$}; 
\node at (0.64,4.36) {$t_6$}; 
\node at (1.22,4.36) {$t_6$}; 
\node at (1.29,3.62) {$t_7$}; 
\node at (0.24,3.43) {$t_5$}; 
\node at (0.77,3.62) {$t_4$}; 
\node at (0.14,2.65) {$t_2$}; 
\node at (0.81,2.65) {$t_{10}$}; 
\node at (0.82,2.00) {$t_8$}; 
\node at (0.18,2.00) {$t_7$}; 
\node at (1.26,2.00) {$t_3$}; 
\node at (0.37,1.25) {$t_9$}; 
\node at (0.84,1.25) {$t_9$}; 
\node at (1.42,1.25) {$t_9$}; 
\node at (0.23,0.75) {$t_1$}; 
\node at (0.77,0.75) {$t_1$}; 
\node at (1.26,0.75) {$t_1$}; 
\node at (0.37,0.25) {$t_0$}; 
\node at (0.84,0.25) {$t_0$}; 
\node at (1.42,0.25) {$t_0$}; 
\end{scope}
\end{tikzpicture}
\end{center}
\caption{Partitions for the 11 Jeandel-Rao Wang tiles.
    From left to right, the partition 
    $\Ycal$ for the right color,
    $\Zcal$ for the top color,
    $R_0^{\be_1}(\Ycal)$ for the left color and
    $R_0^{\be_2}(\Zcal)$ for the bottom color.
    Their refinement is the partition $\Pcal_0$ where each part is associated
    with one of Jeandel-Rao Wang tiles.}
\label{fig:jeandelrao-partition}
\end{figure}

The refined partition is $\Pcal_0=
\mathcal{Y}\wedge
\mathcal{Z}\wedge
R_0^{\be_1}(\mathcal{Y})\wedge
R_0^{\be_2}(\mathcal{Z})=\{P_t\}_{t\in I\times J\times I\times J}$.
The set of quadruples $(i,j,k,\ell)$ such that
    $P_{(i,j,k,\ell)} = Y_i \cap Z_j
                  \cap R^{\be_1}(Y_k)
                  \cap R^{\be_2}(Z_\ell)$ is nonempty is
\[
\begin{array}{c}
    \T_0= \left\{ 
(2, 4, 2, 1),
(2, 2, 2, 0),
(1, 1, 3, 1),
(1, 2, 3, 2),
(3, 1, 3, 3),
(0, 1, 3, 1),
\right.\\
\hspace{1cm}\left.
(0, 0, 0, 1),
(3, 1, 0, 2),
(0, 2, 1, 2),
(1, 2, 1, 4),
(3, 3, 1, 2)
 \right\}
\end{array}
\]
which can be seen as a set of Wang tiles
\begin{equation}\label{eq:jeandel_rao_tile_set}
\begin{array}{c}
    \T_0= \left\{ 
t_0=\hspace{-2mm}\raisebox{-3mm}{\JeandelRaoO},
t_1=\hspace{-2mm}\raisebox{-3mm}{\JeandelRaoI},
t_2=\hspace{-2mm}\raisebox{-3mm}{\JeandelRaoII},
t_3=\hspace{-2mm}\raisebox{-3mm}{\JeandelRaoIII},
t_4=\hspace{-2mm}\raisebox{-3mm}{\JeandelRaoIV},
t_5=\hspace{-2mm}\raisebox{-3mm}{\JeandelRaoV},
\right.\\
\hspace{1cm}\left.
t_6=\raisebox{-3mm}{\JeandelRaoVI},
t_7=\raisebox{-3mm}{\JeandelRaoVII},
t_8=\raisebox{-3mm}{\JeandelRaoVIII},
t_9=\raisebox{-3mm}{\JeandelRaoIX},
t_{10}=\raisebox{-3mm}{\JeandelRaoX}
 \right\}.
\end{array}
\end{equation}
We observe that $\T_0$ is Jeandel-Rao's set of 11 tiles
\cite{jeandel_aperiodic_2015}.
Let $\Omega_0=\Omega_{\T_0}$ be the Jeandel-Rao Wang shift.
We may now prove Theorem~\ref{thm:JR-max-equi-factor} which follows mostly from the work done in the Part~\ref{part:1}.

\begin{proof}[Proof of Theorem~\ref{thm:JR-max-equi-factor}]
    (i)
The dynamical system $(\R^2/\Gamma_0,\Z^2,R_0)$ is minimal.
Since $R_0^{\be_1}$ and $R_0^{\be_2}$ are linearly independent irrational
rotations on $\R^2/\Gamma_0$, we have
that $R_0$ is a free $\Z^2$-action.
Thus from Lemma~\ref{lem:minimal-aperiodic},
$\Xcal_{\Pcal_0,R_0}$ is minimal and aperiodic.
From Proposition~\ref{prop:is_wang_tiling} and
from Equation~\eqref{eq:jeandel_rao_tile_set},
we have $\Xcal_{\Pcal_0,R_0}\subseteq\Omega_{0}$.
It was proved in \cite{labbe_substitutive_2018_with_doi} that the Jeandel-Rao Wang shift
$\Omega_0$ is not minimal. 
Thus $\Omega_0\setminus\Xcal_{\Pcal_0,R_0}$ is nonempty.

(ii)
    The atom $P_{t_{10}}$ is invariant only under the trivial
translation.
Therefore, from Lemma~\ref{lem:symbolic-representation},
$\Pcal_0$ gives a symbolic representation of 
$(\R^2/\Gamma_0,\Z^2,R_0)$.

    (iii)
From Proposition~\ref{prop:factor-map}, there exists a
factor map $f_0$ from $(\Xcal_{\Pcal_0,R_0},\Z^2,\sigma)$ to
$(\R^2/\Gamma_0,\Z^2,R_0)$ and 
from Corollary~\ref{cor:max-equicontinuous-factor},
$(\R^2/\Gamma_0, \Z^2,R_0)$ is the maximal equicontinuous
factor of $(\Xcal_{\Pcal_0,R_0},\Z^2,\sigma)$.

    (iv)
From Proposition~\ref{prop:factor-map}, we have
that $f_0$ is one-to-one on $\torus\setminus\Delta_{\Pcal_0,R_0}$.
Suppose that $\bx\in\Delta_{\Pcal_0,R_0}$.
We have $\card(f_0^{-1}(\bx))\geq2$.
If $\card(f_0^{-1}(\bx))>2$, then we may show that
    there exists $\bn\in\Z^2$ such that $\bx=R_0^\bn(\zero)$.
    If $\bx=R_0^{\bn}(\zero)$ for some $\bn\in\Z^2$, then
the set $f_0^{-1}(\bx)$ contains 8 different configurations
    of the form $\scSymbRep_0^\bv(0)$ for some $\bv\in\R^2\setminus\Theta^{\Pcal_0}$
    where $\Theta^{\Pcal_0}=\R\cdot\{(1,0),(0,1),(1,\varphi),(1,\varphi^2)\}$.
If $\bx\in\Delta_{\Pcal_0,R_0}$ but is not
    in the orbit of $\zero$ under $R_0$,
    then $\card(f_0^{-1}(\bx))=2$.
    We conclude that
    $\{\card(f_0^{-1}(\bx))\mid\bx\in\R^2/\Gamma_0\}=\{1,2,8\}$.

(v)
    We have that $\lambda(\partial P)=0$ for each atom $P\in\Pcal_0$
    where $\lambda$ is the Haar measure on $\R^2/\Gamma_0$.
    The result follows from Proposition~\ref{prop:measurable-conjugacy}.
\end{proof}

The frequency of any pattern $p$ in $\Xcal_{\Pcal_0,R_0}$ is equal to
the measure of the associated cylinder $[p]$ in $\Xcal_{\Pcal_0,R_0}$
which is equal to the Haar measure of $f_0([p])$ in $\R^2/\Gamma_0$
and can be computed using Equation~\eqref{eq:cylindre-intersection-polygones}
as the area of the coding region which is the intersection of polygons. 
Here is what it gives for the frequencies of tiles in $\Xcal_{\Pcal_0,R_0}$.

\begin{proposition}
    The frequencies of each of the 11 Jeandel-Rao tiles $t_i$ for
    $i\in\{0,\dots,10\}$ in the subshift $\Xcal_{\Pcal_0,R_0}$ is
    given by the measure of the cylinders below:
\begin{align*}
    \nu([7])                      &= 5/(12\varphi+14) &\approx 0.1496,\\
    \nu([0])= \nu([1])= \nu([3])= \nu([6])= \nu([9]) 
                                  &= 1/(2\varphi+6)   &\approx 0.1083,\\
    \nu([5])                      &= 1/(5\varphi+4)   &\approx 0.0827,\\
    \nu([4])=\nu([8])=\nu([10])   &= 1/(8\varphi+2)   &\approx 0.0669,\\
    \nu([2])                      &= 1/(18\varphi+10) &\approx 0.0256.
\end{align*}
\end{proposition}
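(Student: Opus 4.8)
The plan is to use the measure-theoretic isomorphism of Proposition~\ref{prop:measurable-conjugacy} to reidentify each tile frequency with a Euclidean area on the torus $\R^2/\Gamma_0$, and then to carry out the area computations explicitly. The argument in that proposition establishes the equality $\nu(Z)=\lambda(f_0(Z))$ for every cylinder $Z$, where $f_0$ is the factor map of Proposition~\ref{prop:factor-map} and $\lambda$ is the normalized Haar measure. Applying this to the cylinder $[i]$ of a single letter $i\in\{0,\dots,10\}$, Equation~\eqref{eq:cylindre-intersection-polygones} gives $f_0([i])=\overline{P_{t_i}}$, so that $\nu([i])=\lambda(\overline{P_{t_i}})$. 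Since $\lambda$ is normalized, this equals the Euclidean area of the atom $P_{t_i}$ divided by the area of the fundamental domain $\Dcal=[0,\varphi)\times[0,\varphi+3)$, and that denominator is $\varphi(\varphi+3)=\varphi^2+3\varphi=4\varphi+1$ using $\varphi^2=\varphi+1$.

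The main body of work is then to read off the vertices of each atom $P_{t_i}$ from the partition $\Pcal_0$ depicted in Figure~\ref{fig:jeandelrao-partition} (right) and to compute its area. Several atoms, such as $P_{t_0}$, $P_{t_1}$, $P_{t_6}$ and $P_{t_9}$, appear as unions of disjoint polygons inside $\Dcal$ because of the wraparound in the partition $\Zcal$; for these one sums the contributions of all pieces. Each area is obtained by the shoelace formula applied to the listed vertices and is reduced to a linear expression in $\varphi$ by repeatedly substituting $\varphi^2=\varphi+1$. For instance, the pieces of $P_{t_0}$ contribute total area $\varphi/2$, so that $\nu([0])=\tfrac{\varphi/2}{4\varphi+1}=\tfrac{\varphi}{8\varphi+2}=\tfrac{1}{2\varphi+6}$, the last equality again following from $\varphi^2=\varphi+1$. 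The remaining ten frequencies are produced identically: compute the polygonal area, divide by $4\varphi+1$, and rationalize the denominator inside $\Z[\varphi]$ to reach the stated normal forms.

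I expect the only genuine obstacle to be bookkeeping rather than conceptual: correctly extracting the polygon vertices from the figure, in particular tracking which atoms split into several pieces across the torus, and simplifying the resulting quotients of elements of $\Z[\varphi]$. As a final safeguard against a miscounted area, I would verify that the eleven frequencies sum to $1$, as they must since $\Pcal_0$ is a partition; numerically $0.1496+5(0.1083)+0.0827+3(0.0669)+0.0256=1$, which confirms the computation is consistent.
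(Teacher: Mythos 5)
Your proposal is correct and follows exactly the same route as the paper: the paper's proof likewise invokes the isomorphism of Theorem~\ref{thm:JR-max-equi-factor}~(v) (i.e.\ Proposition~\ref{prop:measurable-conjugacy}) to reduce each $\nu([i])$ to the area of the atom $P_{t_i}$ divided by the area $\varphi(\varphi+3)=4\varphi+1$ of the fundamental domain. Your additional checks (shoelace computation in $\Z[\varphi]$ and verifying that the frequencies sum to $1$) are sound bookkeeping but not a different argument.
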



\begin{proof}
Thanks to Theorem~\ref{thm:JR-max-equi-factor} (v), 
it can be computed from the area of atoms of the partition $\Pcal_0$ shown in
Figure~\ref{fig:jeandelrao-partition} and dividing by the area of the
fundamental domain which is $\varphi(\varphi+3)=4\varphi+1$.
\end{proof}

We may check that the frequency of each tile in the minimal subshift
$\Xcal_{\Pcal_0,R_0}\subset\Omega_0$ match those obtained in
\cite{labbe_substitutive_2018_with_doi} 
for some minimal subshift $X_0\subset\Omega_0$
and computed from the substitutive structure of $X_0$.
In fact, 
$X_0=\Xcal_{\Pcal_0,R_0}$ but we postpone the proof of this
in a later work \cite{labbe_induction_2019} in which the substitutive structure of
$\Xcal_{\Pcal_0,R_0}$ is described using Rauzy induction of toral partitions
and $\Z^2$-rotations.
In \cite{labbe_substitutive_2018_with_doi}, we also proved that $X_0$ is a shift of finite type
as it can be described by the forbidden patterns coming from $\Omega_0$ plus 
a finite number of other forbidden patterns.
The completion of the proof of the equality
$X_0=\Xcal_{\Pcal_0,R_0}$ 
will imply that the following statement holds, but we can only state it as a
conjecture for now.

\begin{conjecture}
    $\Pcal_0$ is a Markov partition for $(\R^2/\Gamma_0,\Z^2,R_0)$.
\end{conjecture}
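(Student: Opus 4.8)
The plan is to reduce the statement to a single missing ingredient. By Definition~\ref{def:Markov}, $\Pcal_0$ is a Markov partition for $(\R^2/\Gamma_0,\Z^2,R_0)$ as soon as (a)~$\Pcal_0$ gives a symbolic representation of $(\R^2/\Gamma_0,\Z^2,R_0)$ and (b)~$\Xcal_{\Pcal_0,R_0}$ is a shift of finite type. Part~(a) is exactly Theorem~\ref{thm:JR-max-equi-factor}(ii), so only~(b) remains. Since the Jeandel--Rao Wang shift $\Omega_0$ is an SFT and contains the minimal SFT $X_0$ of \cite{labbe_substitutive_2018_with_doi}, it suffices to prove the equality $X_0=\Xcal_{\Pcal_0,R_0}$: SFT-ness of $\Xcal_{\Pcal_0,R_0}$ would then be inherited from $X_0$. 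To establish this equality I would exploit minimality rather than prove both inclusions directly. The subshift $\Xcal_{\Pcal_0,R_0}$ is nonempty, closed and shift-invariant, while $X_0$ is minimal; hence a single inclusion $\Xcal_{\Pcal_0,R_0}\subseteq X_0$ already forces $\Xcal_{\Pcal_0,R_0}=X_0$. The agreement of tile frequencies recorded above (identical to those computed from the substitutive structure of $X_0$) is a consistency check pointing to this equality, but is of course not sufficient on its own.

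The inclusion $\Xcal_{\Pcal_0,R_0}\subseteq X_0$ can in principle be checked patternwise. Write $X_0=\SFT(\Fcal_0\cup\Fcal)$, where $\Fcal_0$ is the set of Wang dominoes defining $\Omega_0$ and $\Fcal$ is the finite set of additional forbidden patterns produced in \cite{labbe_substitutive_2018_with_doi}. By Proposition~\ref{prop:is_wang_tiling} we already have $\Xcal_{\Pcal_0,R_0}\subseteq\Omega_0$, so $\Xcal_{\Pcal_0,R_0}$ avoids $\Fcal_0$; it remains to show it avoids every $p\in\Fcal$. A pattern $p\in\{0,\dots,10\}^S$ lies in the language of $\Xcal_{\Pcal_0,R_0}$ if and only if its coding region
\[
    f_0([p])=\bigcap_{\bn\in S} R_0^{-\bn}\,\overline{P_{p_\bn}}\subset\R^2/\Gamma_0
\]
has nonempty interior, by Equation~\eqref{eq:cylindre-intersection-polygones}. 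Thus the strategy is: for each of the finitely many $p\in\Fcal$, compute the polygonal intersection $f_0([p])$ from the explicit partition $\Pcal_0$ of Figure~\ref{fig:jeandelrao-partition} and verify that it has empty interior. Should all these regions vanish, every $p\in\Fcal$ is forbidden in $\Xcal_{\Pcal_0,R_0}$, giving $\Xcal_{\Pcal_0,R_0}\subseteq X_0$ and hence the desired equality.

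The main obstacle is twofold. First, the additional forbidden set $\Fcal$ may consist of patterns with large support, so the polygonal emptiness computations, while finite and elementary, are delicate: the atoms of $\Pcal_0$ are thin polygons with edges of irrational slopes $1,\varphi,\varphi^2$, and a naive intersection of many $R_0^{-\bn}$-translates risks error near the boundary set $\Delta_{\Pcal_0,R_0}$. Second, and more conceptually, the patternwise check presupposes an exact description of $\Fcal$, and confirming that \emph{no} further forbidden patterns are needed is precisely the content of $X_0=\Xcal_{\Pcal_0,R_0}$. For this reason the cleanest route, which is the one pursued in \cite{labbe_induction_2019}, is structural: one develops a Rauzy induction of the toroidal partition $\Pcal_0$ under the $\Z^2$-rotation $R_0$ that renormalizes $(\R^2/\Gamma_0,\Z^2,R_0)$ onto itself and realizes geometrically the substitution defining $X_0$. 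Matching this self-similar hierarchy with the combinatorial substitution of $X_0$ then yields the equality directly, and the hard part is showing that the induced partition and rotation close up exactly under renormalization.
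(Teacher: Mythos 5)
The statement you are proving is stated in the paper only as a \emph{conjecture}, and deliberately so: the paper proves part (a) of your reduction (Theorem~\ref{thm:JR-max-equi-factor}(ii), via Lemma~\ref{lem:symbolic-representation} applied to the atom $P_{t_{10}}$), but explicitly postpones the key ingredient for part (b) --- the equality $X_0=\Xcal_{\Pcal_0,R_0}$ --- to the later work \cite{labbe_induction_2019}, where it is obtained by Rauzy induction of toroidal partitions. Your proposal reproduces this framing accurately: the reduction via Definition~\ref{def:Markov}, the observation that SFT-ness of $\Xcal_{\Pcal_0,R_0}$ would follow from $X_0=\Xcal_{\Pcal_0,R_0}$ together with the fact (from \cite{labbe_substitutive_2018_with_doi}) that $X_0$ is an SFT, and the correct remark that by minimality of $X_0$ a single inclusion $\Xcal_{\Pcal_0,R_0}\subseteq X_0$ suffices, since $\Xcal_{\Pcal_0,R_0}$ is a nonempty closed shift-invariant set. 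All of this is sound and matches the paper.

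The gap is that neither route you sketch is actually carried out, so what you have is a correct reduction, not a proof. The patternwise route is in principle a finite, exact verification (the coding regions are polygons with vertices in $\Q(\sqrt{5})$, so emptiness of interior is decidable exactly, with no numerical delicacy near $\Delta_{\Pcal_0,R_0}$), and your stated ``second obstacle'' for it is not real: once $\Xcal_{\Pcal_0,R_0}\subseteq\SFT(\Fcal_0\cup\Fcal)=X_0$ is established, minimality of $X_0$ forces equality, so there is nothing further to confirm about missing forbidden patterns. But you neither exhibit $\Fcal$ nor perform the checks. The structural route (renormalization of $(\Pcal_0,R_0)$ matching the substitution defining $X_0$) is the one the author actually pursues, but it is the content of a separate paper and you only name it. As written, the argument reduces the conjecture to an input you do not supply, which is exactly why the paper itself stops at a conjecture.
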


\section{Example 2: A minimal aperiodic Wang shift defined by 19 tiles}

On the torus $\torus=\R^2/\Z^2$, we consider the $\Z^2$-rotation
$R_\U:\Z^2\times\torus\to\torus$
defined by
\[
    R_\U^\bn(\bx):=R_\U(\bn,\bx)=\bx+\varphi^{-2}\bn
\]
for every $\bn\in\Z^2$ where $\varphi=\frac{1+\sqrt{5}}{2}$.
Let $I=\{\mathrm{A,B,C,D,E,F,G,H,I,J}\}$ and $J=\{\mathrm{K,L,M,N,O,P}\}$ be sets of colors
and consider the partitions 
$\Ycal=\{Y_i\}_{i\in I}$ and $\Zcal=\{Z_j\}_{j\in J}$
shown in Figure~\ref{fig:19-selfsimilar-partitions}.

The refined partition is $\Pcal_\U=
\mathcal{Y}\wedge
\mathcal{Z}\wedge
R_\U^{\be_1}(\mathcal{Y})\wedge
R_\U^{\be_2}(\mathcal{Z})=\{P_u\}_{u\in I\times J\times I\times J}$.
Let $\U$ be the set of quadruples $(i,j,k,\ell)$ such that
    $P_{(i,j,k,\ell)} = Y_i \cap Z_j
                  \cap R_\U^{\be_1}(Y_k)
                  \cap R_\U^{\be_2}(Z_\ell)$ is nonempty.
We represent $\U$
as a set of Wang tiles, see
Figure~\ref{fig:19-wang-tile-set}. 
It corresponds to the set of 19 Wang tiles
$\U=\{u_0,u_1,\dots,u_{18}\}$ 
introduced in \cite{MR3978536}
which was derived from the substitutive structure of the Jeandel-Rao Wang shift
\cite{labbe_substitutive_2018_with_doi}.

\begin{figure}[h]
\begin{center}
\begin{tabular}{ccccc}
$\Ycal$ & 
$\Zcal$ & 
$R_\U^{\be_1}(\Ycal)$ & 
$R_\U^{\be_2}(\Zcal)$ &
$\Pcal_\U$\\
\hspace{-3mm}\includegraphics[scale=.91]{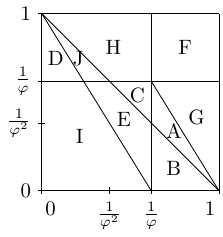}&
\hspace{-3mm}\includegraphics[scale=.91]{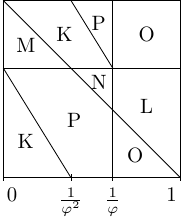}&
\hspace{-3mm}\includegraphics[scale=.91]{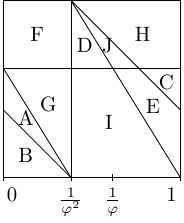}&
\hspace{-3mm}\includegraphics[scale=.91]{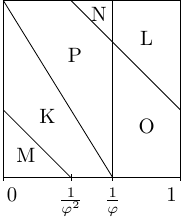}&
\hspace{-3mm}\includegraphics[scale=.91]{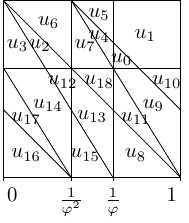}
\end{tabular}
\end{center}
    \caption{Partitions for the set $\U$ of 19 Wang tiles.
    From left to right, the partition 
    $\Ycal$ for the right color,
    $\Zcal$ for the top color,
    $R_\U^{\be_1}(\Ycal)$ for the left color and
    $R_\U^{\be_2}(\Zcal)$ for the bottom color.
    Their refinement is the partition $\Pcal_\U$ where each part is associated
    with a Wang tile.}
    \label{fig:19-selfsimilar-partitions}
\end{figure}

\begin{figure}[h]
\begin{center}
\begin{tikzpicture}
[scale=1]
\tikzstyle{every node}=[font=\tiny]
\node[] at (0.5, 0.5) {0};
\draw (1.0, 0.0) -- ++ (0,1);
\draw (0.0, 1.0) -- ++ (1,0);
\draw (0.0, 0.0) -- ++ (0,1);
\draw (0.0, 0.0) -- ++ (1,0);
\node[rotate=0,black] at (0.8, 0.5) {F};
\node[rotate=0,black] at (0.5, 0.8) {O};
\node[rotate=0,black] at (0.2, 0.5) {J};
\node[rotate=0,black] at (0.5, 0.2) {O};
\node[] at (1.6, 0.5) {1};
\draw (2.1, 0.0) -- ++ (0,1);
\draw (1.1, 1.0) -- ++ (1,0);
\draw (1.1, 0.0) -- ++ (0,1);
\draw (1.1, 0.0) -- ++ (1,0);
\node[rotate=0,black] at (1.9000000000000001, 0.5) {F};
\node[rotate=0,black] at (1.6, 0.8) {O};
\node[rotate=0,black] at (1.3, 0.5) {H};
\node[rotate=0,black] at (1.6, 0.2) {L};
\node[] at (2.7, 0.5) {2};
\draw (3.2, 0.0) -- ++ (0,1);
\draw (2.2, 1.0) -- ++ (1,0);
\draw (2.2, 0.0) -- ++ (0,1);
\draw (2.2, 0.0) -- ++ (1,0);
\node[rotate=0,black] at (3.0, 0.5) {J};
\node[rotate=0,black] at (2.7, 0.8) {M};
\node[rotate=0,black] at (2.4000000000000004, 0.5) {F};
\node[rotate=0,black] at (2.7, 0.2) {P};
\node[] at (3.8000000000000003, 0.5) {3};
\draw (4.300000000000001, 0.0) -- ++ (0,1);
\draw (3.3000000000000003, 1.0) -- ++ (1,0);
\draw (3.3000000000000003, 0.0) -- ++ (0,1);
\draw (3.3000000000000003, 0.0) -- ++ (1,0);
\node[rotate=0,black] at (4.1000000000000005, 0.5) {D};
\node[rotate=0,black] at (3.8000000000000003, 0.8) {M};
\node[rotate=0,black] at (3.5000000000000004, 0.5) {F};
\node[rotate=0,black] at (3.8000000000000003, 0.2) {K};
\node[] at (4.9, 0.5) {4};
\draw (5.4, 0.0) -- ++ (0,1);
\draw (4.4, 1.0) -- ++ (1,0);
\draw (4.4, 0.0) -- ++ (0,1);
\draw (4.4, 0.0) -- ++ (1,0);
\node[rotate=0,black] at (5.2, 0.5) {H};
\node[rotate=0,black] at (4.9, 0.8) {P};
\node[rotate=0,black] at (4.6000000000000005, 0.5) {J};
\node[rotate=0,black] at (4.9, 0.2) {P};
\node[] at (6.0, 0.5) {5};
\draw (6.5, 0.0) -- ++ (0,1);
\draw (5.5, 1.0) -- ++ (1,0);
\draw (5.5, 0.0) -- ++ (0,1);
\draw (5.5, 0.0) -- ++ (1,0);
\node[rotate=0,black] at (6.3, 0.5) {H};
\node[rotate=0,black] at (6.0, 0.8) {P};
\node[rotate=0,black] at (5.7, 0.5) {H};
\node[rotate=0,black] at (6.0, 0.2) {N};
\node[] at (7.1000000000000005, 0.5) {6};
\draw (7.6000000000000005, 0.0) -- ++ (0,1);
\draw (6.6000000000000005, 1.0) -- ++ (1,0);
\draw (6.6000000000000005, 0.0) -- ++ (0,1);
\draw (6.6000000000000005, 0.0) -- ++ (1,0);
\node[rotate=0,black] at (7.4, 0.5) {H};
\node[rotate=0,black] at (7.1000000000000005, 0.8) {K};
\node[rotate=0,black] at (6.800000000000001, 0.5) {F};
\node[rotate=0,black] at (7.1000000000000005, 0.2) {P};
\node[] at (8.200000000000001, 0.5) {7};
\draw (8.700000000000001, 0.0) -- ++ (0,1);
\draw (7.700000000000001, 1.0) -- ++ (1,0);
\draw (7.700000000000001, 0.0) -- ++ (0,1);
\draw (7.700000000000001, 0.0) -- ++ (1,0);
\node[rotate=0,black] at (8.500000000000002, 0.5) {H};
\node[rotate=0,black] at (8.200000000000001, 0.8) {K};
\node[rotate=0,black] at (7.900000000000001, 0.5) {D};
\node[rotate=0,black] at (8.200000000000001, 0.2) {P};
\node[] at (9.3, 0.5) {8};
\draw (9.8, 0.0) -- ++ (0,1);
\draw (8.8, 1.0) -- ++ (1,0);
\draw (8.8, 0.0) -- ++ (0,1);
\draw (8.8, 0.0) -- ++ (1,0);
\node[rotate=0,black] at (9.600000000000001, 0.5) {B};
\node[rotate=0,black] at (9.3, 0.8) {O};
\node[rotate=0,black] at (9.0, 0.5) {I};
\node[rotate=0,black] at (9.3, 0.2) {O};
\node[] at (10.4, 0.5) {9};
\draw (10.9, 0.0) -- ++ (0,1);
\draw (9.9, 1.0) -- ++ (1,0);
\draw (9.9, 0.0) -- ++ (0,1);
\draw (9.9, 0.0) -- ++ (1,0);
\node[rotate=0,black] at (10.700000000000001, 0.5) {G};
\node[rotate=0,black] at (10.4, 0.8) {L};
\node[rotate=0,black] at (10.1, 0.5) {E};
\node[rotate=0,black] at (10.4, 0.2) {O};
\node[] at (0.5, -0.6000000000000001) {10};
\draw (1.0, -1.1) -- ++ (0,1);
\draw (0.0, -0.10000000000000009) -- ++ (1,0);
\draw (0.0, -1.1) -- ++ (0,1);
\draw (0.0, -1.1) -- ++ (1,0);
\node[rotate=0,black] at (0.8, -0.6000000000000001) {G};
\node[rotate=0,black] at (0.5, -0.3000000000000001) {L};
\node[rotate=0,black] at (0.2, -0.6000000000000001) {C};
\node[rotate=0,black] at (0.5, -0.9000000000000001) {L};
\node[] at (1.6, -0.6000000000000001) {11};
\draw (2.1, -1.1) -- ++ (0,1);
\draw (1.1, -0.10000000000000009) -- ++ (1,0);
\draw (1.1, -1.1) -- ++ (0,1);
\draw (1.1, -1.1) -- ++ (1,0);
\node[rotate=0,black] at (1.9000000000000001, -0.6000000000000001) {A};
\node[rotate=0,black] at (1.6, -0.3000000000000001) {L};
\node[rotate=0,black] at (1.3, -0.6000000000000001) {I};
\node[rotate=0,black] at (1.6, -0.9000000000000001) {O};
\node[] at (2.7, -0.6000000000000001) {12};
\draw (3.2, -1.1) -- ++ (0,1);
\draw (2.2, -0.10000000000000009) -- ++ (1,0);
\draw (2.2, -1.1) -- ++ (0,1);
\draw (2.2, -1.1) -- ++ (1,0);
\node[rotate=0,black] at (3.0, -0.6000000000000001) {E};
\node[rotate=0,black] at (2.7, -0.3000000000000001) {P};
\node[rotate=0,black] at (2.4000000000000004, -0.6000000000000001) {G};
\node[rotate=0,black] at (2.7, -0.9000000000000001) {P};
\node[] at (3.8000000000000003, -0.6000000000000001) {13};
\draw (4.300000000000001, -1.1) -- ++ (0,1);
\draw (3.3000000000000003, -0.10000000000000009) -- ++ (1,0);
\draw (3.3000000000000003, -1.1) -- ++ (0,1);
\draw (3.3000000000000003, -1.1) -- ++ (1,0);
\node[rotate=0,black] at (4.1000000000000005, -0.6000000000000001) {E};
\node[rotate=0,black] at (3.8000000000000003, -0.3000000000000001) {P};
\node[rotate=0,black] at (3.5000000000000004, -0.6000000000000001) {I};
\node[rotate=0,black] at (3.8000000000000003, -0.9000000000000001) {P};
\node[] at (4.9, -0.6000000000000001) {14};
\draw (5.4, -1.1) -- ++ (0,1);
\draw (4.4, -0.10000000000000009) -- ++ (1,0);
\draw (4.4, -1.1) -- ++ (0,1);
\draw (4.4, -1.1) -- ++ (1,0);
\node[rotate=0,black] at (5.2, -0.6000000000000001) {I};
\node[rotate=0,black] at (4.9, -0.3000000000000001) {P};
\node[rotate=0,black] at (4.6000000000000005, -0.6000000000000001) {G};
\node[rotate=0,black] at (4.9, -0.9000000000000001) {K};
\node[] at (6.0, -0.6000000000000001) {15};
\draw (6.5, -1.1) -- ++ (0,1);
\draw (5.5, -0.10000000000000009) -- ++ (1,0);
\draw (5.5, -1.1) -- ++ (0,1);
\draw (5.5, -1.1) -- ++ (1,0);
\node[rotate=0,black] at (6.3, -0.6000000000000001) {I};
\node[rotate=0,black] at (6.0, -0.3000000000000001) {P};
\node[rotate=0,black] at (5.7, -0.6000000000000001) {I};
\node[rotate=0,black] at (6.0, -0.9000000000000001) {K};
\node[] at (7.1000000000000005, -0.6000000000000001) {16};
\draw (7.6000000000000005, -1.1) -- ++ (0,1);
\draw (6.6000000000000005, -0.10000000000000009) -- ++ (1,0);
\draw (6.6000000000000005, -1.1) -- ++ (0,1);
\draw (6.6000000000000005, -1.1) -- ++ (1,0);
\node[rotate=0,black] at (7.4, -0.6000000000000001) {I};
\node[rotate=0,black] at (7.1000000000000005, -0.3000000000000001) {K};
\node[rotate=0,black] at (6.800000000000001, -0.6000000000000001) {B};
\node[rotate=0,black] at (7.1000000000000005, -0.9000000000000001) {M};
\node[] at (8.200000000000001, -0.6000000000000001) {17};
\draw (8.700000000000001, -1.1) -- ++ (0,1);
\draw (7.700000000000001, -0.10000000000000009) -- ++ (1,0);
\draw (7.700000000000001, -1.1) -- ++ (0,1);
\draw (7.700000000000001, -1.1) -- ++ (1,0);
\node[rotate=0,black] at (8.500000000000002, -0.6000000000000001) {I};
\node[rotate=0,black] at (8.200000000000001, -0.3000000000000001) {K};
\node[rotate=0,black] at (7.900000000000001, -0.6000000000000001) {A};
\node[rotate=0,black] at (8.200000000000001, -0.9000000000000001) {K};
\node[] at (9.3, -0.6000000000000001) {18};
\draw (9.8, -1.1) -- ++ (0,1);
\draw (8.8, -0.10000000000000009) -- ++ (1,0);
\draw (8.8, -1.1) -- ++ (0,1);
\draw (8.8, -1.1) -- ++ (1,0);
\node[rotate=0,black] at (9.600000000000001, -0.6000000000000001) {C};
\node[rotate=0,black] at (9.3, -0.3000000000000001) {N};
\node[rotate=0,black] at (9.0, -0.6000000000000001) {I};
\node[rotate=0,black] at (9.3, -0.9000000000000001) {P};
\end{tikzpicture}
\end{center}
    \caption{The set $\U=\{u_0,\dots,u_{18}\}$ of 19 Wang tiles.
    Each index $i\in\{0, ..., 18\}$ written in the middle of a tile corresponds
    to a tile $u_i$.}
    \label{fig:19-wang-tile-set}
\end{figure}

Let $\Omega_\U$ be the Wang shift associated with the set of Wang tiles $\U$.
We now prove the second theorem of the article.
Note that the fact that $\Omega_\U$ is minimal 
(proved in \cite{MR3978536}) allows to conclude that $\Pcal_\U$
is a Markov partition
without having to exploit the substitutive structure of
$\Xcal_{\Pcal_\U,R_\U}$ which is computed in
\cite{labbe_induction_2019}.

\begin{proof}
    [Proof of Theorem~\ref{thm:OmegaU-partition}]
(i)
The dynamical system $(\torus,\Z^2,R_\U)$ is minimal.
Since $R_\U^{\be_1}$ and $R_\U^{\be_2}$ are linearly independent irrational
rotations, we have
that $R_\U$ is a free $\Z^2$-action.
Thus from Lemma~\ref{lem:minimal-aperiodic},
$\Xcal_{\Pcal_\U,R_\U}$ is minimal and aperiodic.
From Proposition~\ref{prop:is_wang_tiling},
we have $\Xcal_{\Pcal_\U,R_\U}\subseteq\Omega_{\U}$.
It was proved in \cite{MR3978536} that $\Omega_\U$ is minimal. 
Thus $\Xcal_{\Pcal_\U,R_\U}=\Omega_\U$.

(ii)
The atom $P_{u_0}$ is invariant only under the trivial
translation.
Therefore, from Lemma~\ref{lem:symbolic-representation},
$\Pcal_\U$ gives a symbolic representation of 
$(\torus,\Z^2,R_\U)$.
Moreover $\Xcal_{\Pcal_\U,R_\U}=\Omega_\U$ is a shift of finite type.
Therefore, the conditions of Definition~\ref{def:Markov} are satisfied and
    $\Pcal_\U$ is a Markov partition for the dynamical system
$(\torus,\Z^2,R_\U)$.

(iii)
From Proposition~\ref{prop:factor-map}, there exists a
factor map $f_\U$ from $(\Xcal_{\Pcal_\U,R_\U},\Z^2,\sigma)$ to
$(\torus,\Z^2,R_\U)$ and 
from Corollary~\ref{cor:max-equicontinuous-factor},
$(\torus, \Z^2,R_\U)$ is the maximal equicontinuous
factor of $(\Xcal_{\Pcal_\U,R_\U},\Z^2,\sigma)$.

    (iv)
In Proposition~\ref{prop:factor-map}, we proved
that $f_\U$ is one-to-one on $\torus\setminus\Delta_{\Pcal_\U,R_\U}$.
Suppose that $\bx\in\Delta_{\Pcal_\U,R_\U}$.
We have $\card(f_\U^{-1}(\bx))\geq2$.
If $\card(f_\U^{-1}(\bx))>2$, then we may show that
    there exists $\bn\in\Z^2$ such that $\bx=R_\U^\bn(\zero)$.
    If $\bx=R_\U^{\bn}(\zero)$ for some $\bn\in\Z^2$, then
the set $f_\U^{-1}(\bx)$ contains 8 different configurations
    of the form $\scSymbRep_\U^\bv(0)$ for some $\bv\in\R^2\setminus\Theta^{\Pcal_\U}$
    where $\Theta^{\Pcal_\U}=\R\cdot\{(1,0),(0,1),(1,-1),(1,-\varphi)\}$.
If $\bx\in\Delta_{\Pcal_\U,R_\U}$ but not
    in the orbit of $\zero$ under $R_\U$,
    then $\card(f_\U^{-1}(\bx))=2$.
    We conclude that
    $\{\card(f_\U^{-1}(\bx))\mid\bx\in\torus\}=\{1,2,8\}$.

(v)
    We have that $\lambda(\partial P)=0$ for each atom $P\in\Pcal_\U$
    where $\lambda$ is the Haar measure on $\torus$.
    The result follows from Proposition~\ref{prop:measurable-conjugacy}.
\end{proof}

\section{Two non-examples}

In this section, we present two ``non-examples''.
The motivation for presenting those two ``non-examples'' is to illustrate that
properties of partitions presented in the previous sections are not shared by
``randomly'' chosen partitions of $\torus$ and $\Z^2$-rotations on $\torus$.

\subsection*{Example 3}

Let $\balpha,\bbeta\in\R^2$.
On the torus $\torus=\R^2/\Z^2$, we consider the $\Z^2$-rotation
$R:\Z^2\times\torus\to\torus$
defined by
\[
    R^\bn(\bx):=R(\bn,\bx)=\bx+n_1\balpha + n_2\bbeta
\]
for every $\bn=(n_1,n_2)\in\Z^2$.
Let $\Ycal=\{Y_A\}$ and $\Zcal=\{Z_B\}$ be trivial partitions
with $Y_A=Z_B=\torus$.
The refined partition is $\Pcal=
\mathcal{Y}\wedge
\mathcal{Z}\wedge
R^{\be_1}(\mathcal{Y})\wedge
R^{\be_2}(\mathcal{Z})=\{P_{(A,B,A,B)}\}$
where $P_{(A,B,A,B)}=\torus$.
The set of Wang tiles $\T=\{\tau\}$ is a singleton set with
$\tau=(A,B,A,B)$.
The associated color partitions
and the tile coding partition are shown in
Figure~\ref{fig:example0-partitions}.
\begin{figure}[h]
\begin{center}
\def\t{0.02}   
\def\Haxis{
\foreach \x/\label/\xshift in
{0/0/.15cm, 1/1/-.15cm}
\draw (\x,\t) -- (\x,-\t) node[below,xshift=\xshift] {$\label$};}
\def\Vaxis{
\foreach \x/\label/\xshift in
{0/0/.15cm, 1/1/-.15cm}
\draw (\t,\x) -- (-\t,\x) node[left,yshift=\xshift] {$\label$};}
\begin{tabular}{ccccc}
$\Ycal$ & 
$\Zcal$ & 
$R^{\be_1}(\Ycal)$ & 
$R^{\be_2}(\Zcal)$ &
$\Pcal$\\
    \begin{tikzpicture}[scale=2.2]
        \Haxis\Vaxis
        \draw (0,0) rectangle node {A} (1,1);
    \end{tikzpicture}&
    \begin{tikzpicture}[scale=2.2]
        \Haxis
        \draw (0,0) rectangle node {B} (1,1);
    \end{tikzpicture}&
    \begin{tikzpicture}[scale=2.2]
        \Haxis
        \draw (0,0) rectangle node {A} (1,1);
    \end{tikzpicture}&
    \begin{tikzpicture}[scale=2.2]
        \Haxis
        \draw (0,0) rectangle node {B} (1,1);
    \end{tikzpicture}&
    \begin{tikzpicture}[scale=2.2]
        \Haxis
        \draw (0,0) rectangle node {
$\tau=\raisebox{-3mm}{
\begin{tikzpicture}
[scale=0.900000000000000]
\draw (0, 0) -- (1, 0);
\draw (0, 0) -- (0, 1);
\draw (1, 1) -- (1, 0);
\draw (1, 1) -- (0, 1);
\node[rotate=0,font=\tiny] at (0.800000000000000, 0.5) {A};
\node[rotate=0,font=\tiny] at (0.5, 0.800000000000000) {B};
\node[rotate=0,font=\tiny] at (0.200000000000000, 0.5) {A};
\node[rotate=0,font=\tiny] at (0.5, 0.200000000000000) {B};
\end{tikzpicture}}$} (1,1);
    \end{tikzpicture}
\end{tabular}
\end{center}
    \caption{Partitions for the Example 3.
    From left to right, the partition 
    $\Ycal$ for the right color,
    $\Zcal$ for the top color,
    $R^{\be_1}(\Ycal)$ for the left color and
    $R^{\be_2}(\Zcal)$ for the bottom color.
    Their refinement is the trivial partition $\Pcal$ whose single atom is
    associated with the Wang tile~$\tau$.}
    \label{fig:example0-partitions}
\end{figure}


The map $\scSymbRep:\torus\to\Omega_{\T}$ is clearly not one-to-one, but it
is onto.

\begin{lemma}
    We have $\Xcal_{\Pcal,R}=\Omega_{\T}$,
    but the partition $\Pcal$ does not give a symbolic representation
of $(\torus,\Z^2,R)$.
\end{lemma}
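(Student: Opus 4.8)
The plan is to compute both sides of the claimed equality and find that each reduces to a single point, so the equality is automatic, and then to test the one-point intersection condition of Definition~\ref{def:symb-repr} directly and watch it fail.

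First I would pin down $\Omega_\T$. The set $\T=\{\tau\}$ has a single tile $\tau=(A,B,A,B)$ with $\scright(\tau)=\scleft(\tau)=A$ and $\sctop(\tau)=\scbottom(\tau)=B$, so every horizontal and vertical domino of copies of $\tau$ satisfies the matching conditions~\eqref{eq:1} and~\eqref{eq:2}. Since $\T^{\Z^2}$ has only one element, namely the constant configuration, it follows that $\Omega_\T$ is exactly this one point. For $\Xcal_{\Pcal,R}$, the alphabet is $\A=\{0\}$ with the unique atom $P_0=\torus$, so for any finite $S\subset\Z^2$ and any $w\in\A^S$ we have $\bigcap_{\bk\in S}R^{-\bk}(P_{w_\bk})=\torus\neq\varnothing$; hence every pattern is allowed and $\Xcal_{\Pcal,R}=\{0\}^{\Z^2}$ is again a single point. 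Now Proposition~\ref{prop:is_wang_tiling} gives $\Xcal_{\Pcal,R}\subseteq\Omega_\T$, and since both sides are nonempty singletons the inclusion is an equality, which proves $\Xcal_{\Pcal,R}=\Omega_\T$.

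For the failure of symbolic representation I would take the unique configuration $w\in\Xcal_{\Pcal,R}$, which has $w_\bn=0$ for every $\bn\in\Z^2$. Because $\overline{P_0}=\torus$, for each $n\geq0$ the set
\[
    \overline{D}_n(w)=\bigcap_{\Vert\bk\Vert\leq n}R^{-\bk}\big(\overline{P_{w_\bk}}\big)
    =\bigcap_{\Vert\bk\Vert\leq n}R^{-\bk}(\torus)=\torus
\]
is the whole torus, so $\cap_{n=0}^{\infty}\overline{D}_n(w)=\torus$ contains more than one point. By Definition~\ref{def:symb-repr} this means that $\Pcal$ does not give a symbolic representation of $(\torus,\Z^2,R)$.

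I do not expect any genuine obstacle here: every step is a direct unwinding of the definitions. The only conceptual point worth flagging is that the failure is caused entirely by the triviality of $\Pcal$ (a single atom equal to $\torus$) and is independent of the rotation vectors $\balpha,\bbeta$; this is the degenerate opposite of the situation in Lemma~\ref{lem:symbolic-representation}, which required the existence of an atom invariant only under the trivial translation.
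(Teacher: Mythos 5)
Your proposal is correct and follows essentially the same route as the paper: both identify $\Omega_\T$ and $\Xcal_{\Pcal,R}$ as the same one-point set (the paper via surjectivity of $\scSymbRep$, you via counting), and both observe that the single-atom partition forces every point of $\torus$ to share one symbolic representation. Your explicit computation $\overline{D}_n(w)=\torus$ is just a slightly more literal unwinding of Definition~\ref{def:symb-repr} than the paper's phrasing, not a different argument.
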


\begin{proof}
The set of Wang tiles $\T=\{\tau\}$ is a singleton set with
$\tau=(A,B,A,B)$. Therefore $\Omega_{\T}$ contains a unique configuration
corresponding to the constant map $(m,n)\mapsto\tau$ for all $m,n\in\Z$.
The fact that $\Xcal_{\Pcal,R}\subseteq\Omega_{\T}$ 
    follows from Proposition~\ref{prop:is_wang_tiling}.
The unique constant configuration in $\Omega_{\T}$ can be obtained as
$\scSymbRep(\bx)=\scConfig^{\Pcal,R}_{\bx}$ for any $\bx\in\torus$.
Therefore $\scSymbRep$ is onto.

    The partition $\Pcal$ does not give a symbolic representation
of $(\torus,\Z^2,R)$ as every point of $\torus$ is associated with the same
    configuration.
\end{proof}

\subsection*{Example 4}

Let $\varphi=\frac{1+\sqrt{5}}{2}$.
On the torus $\torus=\R^2/\Z^2$, we consider the $\Z^2$-rotation
$R:\Z^2\times\torus\to\torus$
defined by
\[
    R^\bn(\bx):=R(\bn,\bx)=\bx+\varphi\,\bn
\]
for every $\bn\in\Z^2$.
Let $I=\{A,B\}$ and $J=\{C,D\}$ be sets of colors.
We consider the partitions 
$\Ycal=\{Y_A,Y_B\}$ and $\Zcal=\{Z_C,Z_D\}$
shown in Figure~\ref{fig:example1-partitions} involving slopes $1$ and $-1$ in
the partition of $\torus$ into polygons.
The refined partition is $\Pcal=
\mathcal{Y}\wedge
\mathcal{Z}\wedge
R^{\be_1}(\mathcal{Y})\wedge
R^{\be_2}(\mathcal{Z})=\{P_\tau\}_{\tau\in\T}$
where
$\T$ is the set of Wang tiles
made of 20 tiles
shown in Figure~\ref{fig:boring20tiles}.
\begin{figure}[h]
\begin{center}
\begin{tabular}{ccccc}
$\Ycal$ & 
$\Zcal$ & 
$R^{\be_1}(\Ycal)$ & 
$R^{\be_2}(\Zcal)$ &
$\Pcal$\\
    \hspace{-3mm}\includegraphics[scale=.92]{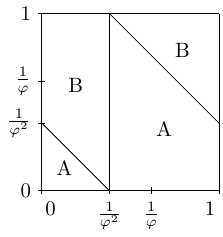}&
    \hspace{-3mm}\includegraphics[scale=.92]{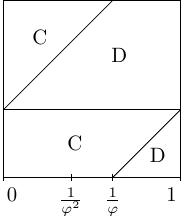}&
    \hspace{-3mm}\includegraphics[scale=.92]{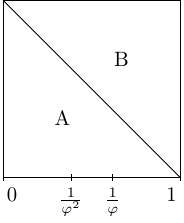}&
    \hspace{-3mm}\includegraphics[scale=.92]{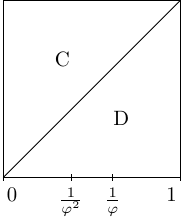}&
    \hspace{-3mm}\includegraphics[scale=.92]{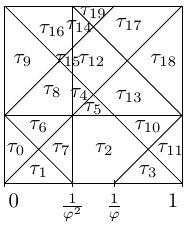}
\end{tabular}
\end{center}
    \caption{Partitions for the Example 4.
    From left to right, the partition 
    $\Ycal$ for the right color,
    $\Zcal$ for the top color,
    $R^{\be_1}(\Ycal)$ for the left color and
    $R^{\be_2}(\Zcal)$ for the bottom color.
    Their refinement is the partition $\Pcal$ where each part is associated
    with a Wang tile.}
    \label{fig:example1-partitions}
\end{figure}

\begin{figure}[h]
\begin{center}
\begin{tikzpicture}
[scale=1]
\tikzstyle{every node}=[font=\tiny]
\node[] at (0.5, 0.5) {0};
\draw (1.0, 0.0) -- ++ (0,1);
\draw (0.0, 1.0) -- ++ (1,0);
\draw (0.0, 0.0) -- ++ (0,1);
\draw (0.0, 0.0) -- ++ (1,0);
\node[rotate=0,black] at (0.8, 0.5) {A};
\node[rotate=0,black] at (0.5, 0.8) {C};
\node[rotate=0,black] at (0.2, 0.5) {A};
\node[rotate=0,black] at (0.5, 0.2) {C};
\node[] at (1.6, 0.5) {1};
\draw (2.1, 0.0) -- ++ (0,1);
\draw (1.1, 1.0) -- ++ (1,0);
\draw (1.1, 0.0) -- ++ (0,1);
\draw (1.1, 0.0) -- ++ (1,0);
\node[rotate=0,black] at (1.9000000000000001, 0.5) {A};
\node[rotate=0,black] at (1.6, 0.8) {C};
\node[rotate=0,black] at (1.3, 0.5) {A};
\node[rotate=0,black] at (1.6, 0.2) {D};
\node[] at (2.7, 0.5) {2};
\draw (3.2, 0.0) -- ++ (0,1);
\draw (2.2, 1.0) -- ++ (1,0);
\draw (2.2, 0.0) -- ++ (0,1);
\draw (2.2, 0.0) -- ++ (1,0);
\node[rotate=0,black] at (3.0, 0.5) {A};
\node[rotate=0,black] at (2.7, 0.8) {C};
\node[rotate=0,black] at (2.4000000000000004, 0.5) {A};
\node[rotate=0,black] at (2.7, 0.2) {D};
\node[] at (3.8000000000000003, 0.5) {3};
\draw (4.300000000000001, 0.0) -- ++ (0,1);
\draw (3.3000000000000003, 1.0) -- ++ (1,0);
\draw (3.3000000000000003, 0.0) -- ++ (0,1);
\draw (3.3000000000000003, 0.0) -- ++ (1,0);
\node[rotate=0,black] at (4.1000000000000005, 0.5) {A};
\node[rotate=0,black] at (3.8000000000000003, 0.8) {D};
\node[rotate=0,black] at (3.5000000000000004, 0.5) {A};
\node[rotate=0,black] at (3.8000000000000003, 0.2) {D};
\node[] at (4.9, 0.5) {4};
\draw (5.4, 0.0) -- ++ (0,1);
\draw (4.4, 1.0) -- ++ (1,0);
\draw (4.4, 0.0) -- ++ (0,1);
\draw (4.4, 0.0) -- ++ (1,0);
\node[rotate=0,black] at (5.2, 0.5) {A};
\node[rotate=0,black] at (4.9, 0.8) {D};
\node[rotate=0,black] at (4.6000000000000005, 0.5) {A};
\node[rotate=0,black] at (4.9, 0.2) {C};
\node[] at (6.0, 0.5) {5};
\draw (6.5, 0.0) -- ++ (0,1);
\draw (5.5, 1.0) -- ++ (1,0);
\draw (5.5, 0.0) -- ++ (0,1);
\draw (5.5, 0.0) -- ++ (1,0);
\node[rotate=0,black] at (6.3, 0.5) {A};
\node[rotate=0,black] at (6.0, 0.8) {D};
\node[rotate=0,black] at (5.7, 0.5) {A};
\node[rotate=0,black] at (6.0, 0.2) {D};
\node[] at (7.1000000000000005, 0.5) {6};
\draw (7.6000000000000005, 0.0) -- ++ (0,1);
\draw (6.6000000000000005, 1.0) -- ++ (1,0);
\draw (6.6000000000000005, 0.0) -- ++ (0,1);
\draw (6.6000000000000005, 0.0) -- ++ (1,0);
\node[rotate=0,black] at (7.4, 0.5) {B};
\node[rotate=0,black] at (7.1000000000000005, 0.8) {C};
\node[rotate=0,black] at (6.800000000000001, 0.5) {A};
\node[rotate=0,black] at (7.1000000000000005, 0.2) {C};
\node[] at (8.200000000000001, 0.5) {7};
\draw (8.700000000000001, 0.0) -- ++ (0,1);
\draw (7.700000000000001, 1.0) -- ++ (1,0);
\draw (7.700000000000001, 0.0) -- ++ (0,1);
\draw (7.700000000000001, 0.0) -- ++ (1,0);
\node[rotate=0,black] at (8.500000000000002, 0.5) {B};
\node[rotate=0,black] at (8.200000000000001, 0.8) {C};
\node[rotate=0,black] at (7.900000000000001, 0.5) {A};
\node[rotate=0,black] at (8.200000000000001, 0.2) {D};
\node[] at (9.3, 0.5) {8};
\draw (9.8, 0.0) -- ++ (0,1);
\draw (8.8, 1.0) -- ++ (1,0);
\draw (8.8, 0.0) -- ++ (0,1);
\draw (8.8, 0.0) -- ++ (1,0);
\node[rotate=0,black] at (9.600000000000001, 0.5) {B};
\node[rotate=0,black] at (9.3, 0.8) {D};
\node[rotate=0,black] at (9.0, 0.5) {A};
\node[rotate=0,black] at (9.3, 0.2) {C};
\node[] at (10.4, 0.5) {9};
\draw (10.9, 0.0) -- ++ (0,1);
\draw (9.9, 1.0) -- ++ (1,0);
\draw (9.9, 0.0) -- ++ (0,1);
\draw (9.9, 0.0) -- ++ (1,0);
\node[rotate=0,black] at (10.700000000000001, 0.5) {B};
\node[rotate=0,black] at (10.4, 0.8) {C};
\node[rotate=0,black] at (10.1, 0.5) {A};
\node[rotate=0,black] at (10.4, 0.2) {C};
\node[] at (0.5, -0.6000000000000001) {10};
\draw (1.0, -1.1) -- ++ (0,1);
\draw (0.0, -0.10000000000000009) -- ++ (1,0);
\draw (0.0, -1.1) -- ++ (0,1);
\draw (0.0, -1.1) -- ++ (1,0);
\node[rotate=0,black] at (0.8, -0.6000000000000001) {A};
\node[rotate=0,black] at (0.5, -0.3000000000000001) {C};
\node[rotate=0,black] at (0.2, -0.6000000000000001) {B};
\node[rotate=0,black] at (0.5, -0.9000000000000001) {D};
\node[] at (1.6, -0.6000000000000001) {11};
\draw (2.1, -1.1) -- ++ (0,1);
\draw (1.1, -0.10000000000000009) -- ++ (1,0);
\draw (1.1, -1.1) -- ++ (0,1);
\draw (1.1, -1.1) -- ++ (1,0);
\node[rotate=0,black] at (1.9000000000000001, -0.6000000000000001) {A};
\node[rotate=0,black] at (1.6, -0.3000000000000001) {D};
\node[rotate=0,black] at (1.3, -0.6000000000000001) {B};
\node[rotate=0,black] at (1.6, -0.9000000000000001) {D};
\node[] at (2.7, -0.6000000000000001) {12};
\draw (3.2, -1.1) -- ++ (0,1);
\draw (2.2, -0.10000000000000009) -- ++ (1,0);
\draw (2.2, -1.1) -- ++ (0,1);
\draw (2.2, -1.1) -- ++ (1,0);
\node[rotate=0,black] at (3.0, -0.6000000000000001) {A};
\node[rotate=0,black] at (2.7, -0.3000000000000001) {D};
\node[rotate=0,black] at (2.4000000000000004, -0.6000000000000001) {B};
\node[rotate=0,black] at (2.7, -0.9000000000000001) {C};
\node[] at (3.8000000000000003, -0.6000000000000001) {13};
\draw (4.300000000000001, -1.1) -- ++ (0,1);
\draw (3.3000000000000003, -0.10000000000000009) -- ++ (1,0);
\draw (3.3000000000000003, -1.1) -- ++ (0,1);
\draw (3.3000000000000003, -1.1) -- ++ (1,0);
\node[rotate=0,black] at (4.1000000000000005, -0.6000000000000001) {A};
\node[rotate=0,black] at (3.8000000000000003, -0.3000000000000001) {D};
\node[rotate=0,black] at (3.5000000000000004, -0.6000000000000001) {B};
\node[rotate=0,black] at (3.8000000000000003, -0.9000000000000001) {D};
\node[] at (4.9, -0.6000000000000001) {14};
\draw (5.4, -1.1) -- ++ (0,1);
\draw (4.4, -0.10000000000000009) -- ++ (1,0);
\draw (4.4, -1.1) -- ++ (0,1);
\draw (4.4, -1.1) -- ++ (1,0);
\node[rotate=0,black] at (5.2, -0.6000000000000001) {A};
\node[rotate=0,black] at (4.9, -0.3000000000000001) {C};
\node[rotate=0,black] at (4.6000000000000005, -0.6000000000000001) {B};
\node[rotate=0,black] at (4.9, -0.9000000000000001) {C};
\node[] at (6.0, -0.6000000000000001) {15};
\draw (6.5, -1.1) -- ++ (0,1);
\draw (5.5, -0.10000000000000009) -- ++ (1,0);
\draw (5.5, -1.1) -- ++ (0,1);
\draw (5.5, -1.1) -- ++ (1,0);
\node[rotate=0,black] at (6.3, -0.6000000000000001) {B};
\node[rotate=0,black] at (6.0, -0.3000000000000001) {D};
\node[rotate=0,black] at (5.7, -0.6000000000000001) {B};
\node[rotate=0,black] at (6.0, -0.9000000000000001) {C};
\node[] at (7.1000000000000005, -0.6000000000000001) {16};
\draw (7.6000000000000005, -1.1) -- ++ (0,1);
\draw (6.6000000000000005, -0.10000000000000009) -- ++ (1,0);
\draw (6.6000000000000005, -1.1) -- ++ (0,1);
\draw (6.6000000000000005, -1.1) -- ++ (1,0);
\node[rotate=0,black] at (7.4, -0.6000000000000001) {B};
\node[rotate=0,black] at (7.1000000000000005, -0.3000000000000001) {C};
\node[rotate=0,black] at (6.800000000000001, -0.6000000000000001) {B};
\node[rotate=0,black] at (7.1000000000000005, -0.9000000000000001) {C};
\node[] at (8.200000000000001, -0.6000000000000001) {17};
\draw (8.700000000000001, -1.1) -- ++ (0,1);
\draw (7.700000000000001, -0.10000000000000009) -- ++ (1,0);
\draw (7.700000000000001, -1.1) -- ++ (0,1);
\draw (7.700000000000001, -1.1) -- ++ (1,0);
\node[rotate=0,black] at (8.500000000000002, -0.6000000000000001) {B};
\node[rotate=0,black] at (8.200000000000001, -0.3000000000000001) {D};
\node[rotate=0,black] at (7.900000000000001, -0.6000000000000001) {B};
\node[rotate=0,black] at (8.200000000000001, -0.9000000000000001) {C};
\node[] at (9.3, -0.6000000000000001) {18};
\draw (9.8, -1.1) -- ++ (0,1);
\draw (8.8, -0.10000000000000009) -- ++ (1,0);
\draw (8.8, -1.1) -- ++ (0,1);
\draw (8.8, -1.1) -- ++ (1,0);
\node[rotate=0,black] at (9.600000000000001, -0.6000000000000001) {B};
\node[rotate=0,black] at (9.3, -0.3000000000000001) {D};
\node[rotate=0,black] at (9.0, -0.6000000000000001) {B};
\node[rotate=0,black] at (9.3, -0.9000000000000001) {D};
\node[] at (10.4, -0.6000000000000001) {19};
\draw (10.9, -1.1) -- ++ (0,1);
\draw (9.9, -0.10000000000000009) -- ++ (1,0);
\draw (9.9, -1.1) -- ++ (0,1);
\draw (9.9, -1.1) -- ++ (1,0);
\node[rotate=0,black] at (10.700000000000001, -0.6000000000000001) {B};
\node[rotate=0,black] at (10.4, -0.3000000000000001) {C};
\node[rotate=0,black] at (10.1, -0.6000000000000001) {B};
\node[rotate=0,black] at (10.4, -0.9000000000000001) {C};
\end{tikzpicture}
\end{center}
    \caption{The set of 20 tiles $\T=\{\tau_0,\dots,\tau_{19}\}$. 
    Each index $i\in\{0, ..., 19\}$ written in the middle of a tile corresponds
    to a tile $\tau_i$.
    The Wang shift $\Omega_{\T}$ contains periodic configurations.}
    \label{fig:boring20tiles}
\end{figure}

\begin{lemma}
The partition $\Pcal$ gives a symbolic representation
of $(\torus,\Z^2,R)$
and $(\torus, \Z^2,R)$ is the maximal equicontinuous
factor of $(\Xcal_{\Pcal,R},\Z^2,\sigma)$.
    We have that $\Xcal_{\Pcal,R}$ 
    is a strictly ergodic and 
    aperiodic subshift
    of $\Omega_{\T}$. 
    But the Wang shift $\Omega_{\T}$ contains a periodic configuration so
    $\Xcal_{\Pcal,R}\subsetneq\Omega_{\T}$.
\end{lemma}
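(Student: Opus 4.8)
The plan is to verify, one by one, the hypotheses of the general results established in Part~\ref{part:1}, and then to isolate the single feature that makes this a ``non-example'', namely the presence of a periodic configuration in $\Omega_\T$. First I would check that $(\torus,\Z^2,R)$ is minimal and that $R$ is a free action. Since $R^\bn(\bx)=\bx+\varphi\bn$, the orbit of any $\bx\in\torus$ is $\{\bx+(\varphi n_1,\varphi n_2)\bmod\Z^2:(n_1,n_2)\in\Z^2\}$; as $\varphi$ is irrational, each coordinate set $\{\varphi n\bmod 1\}$ is dense in $\R/\Z$, and since $n_1,n_2$ range independently the orbit is dense in $\torus$, so the action is minimal. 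Likewise $R^\bn=\mathrm{id}$ forces $\varphi\bn\in\Z^2$, which by irrationality of $\varphi$ gives $\bn=\zero$; hence $R$ is free.

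Next I would produce a symbolic representation. By Lemma~\ref{lem:symbolic-representation} it suffices to exhibit one atom of $\Pcal$ that is invariant only under the trivial translation of $\torus$. Inspecting Figure~\ref{fig:example1-partitions}, the atoms of $\Pcal$ are polygons cut out by the diagonal lines of slope $\pm1$ coming from $\Ycal,\Zcal$ and their $R$-translates; a triangular atom with three pairwise non-parallel edges cannot be sent to itself by any nonzero torus translation, so such an atom does the job. With the symbolic representation in hand, Corollary~\ref{cor:max-equicontinuous-factor} (using minimality of $R$) shows that $(\torus,\Z^2,R)$ is the maximal equicontinuous factor of $(\Xcal_{\Pcal,R},\Z^2,\sigma)$; Lemma~\ref{lem:minimal-aperiodic} shows that $\Xcal_{\Pcal,R}$ is minimal and aperiodic, from minimality and freeness of $R$ respectively; and since every atom is a polygon, its boundary is a finite union of segments with $\lambda(\partial P)=0$, so Proposition~\ref{prop:measurable-conjugacy} yields that $\Xcal_{\Pcal,R}$ is strictly ergodic. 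Finally Proposition~\ref{prop:is_wang_tiling} gives the inclusion $\Xcal_{\Pcal,R}\subseteq\Omega_\T$.

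It remains to prove that the inclusion is strict, which is the only place where the ``non-example'' phenomenon appears. Here I would simply exhibit a periodic configuration in $\Omega_\T$: reading off Figure~\ref{fig:boring20tiles}, the tile $\tau_0$ has right and left colors both equal to $A$ and top and bottom colors both equal to $C$, so $\scright(\tau_0)=\scleft(\tau_0)$ and $\sctop(\tau_0)=\scbottom(\tau_0)$. The constant configuration $\bn\mapsto\tau_0$ therefore satisfies the matching rules \eqref{eq:1}--\eqref{eq:2} and lies in $\Omega_\T$, and it is invariant under every translation of $\Z^2$. Since $\Xcal_{\Pcal,R}$ is aperiodic, it contains no configuration with a nonzero period, so this constant configuration is not in $\Xcal_{\Pcal,R}$; hence $\Xcal_{\Pcal,R}\subsetneq\Omega_\T$.

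The genuinely non-mechanical step is the identification of a non-translation-invariant atom in the second paragraph, since it is the one place that requires reading the explicit geometry of $\Pcal$ from Figure~\ref{fig:example1-partitions} rather than invoking a general lemma; everything else follows formally from Part~\ref{part:1} once minimality and freeness of $R$ are known. Exhibiting the self-matching tile $\tau_0$ is the conceptual heart of the example, but it becomes immediate once the tile list is displayed.
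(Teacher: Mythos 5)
Your proof is correct and follows essentially the same route as the paper: exhibit an atom invariant only under the trivial translation to invoke Lemma~\ref{lem:symbolic-representation}, then chain Corollary~\ref{cor:max-equicontinuous-factor}, Lemma~\ref{lem:minimal-aperiodic}, Proposition~\ref{prop:measurable-conjugacy} and Proposition~\ref{prop:is_wang_tiling}, and finally use the self-matching tile $\tau_0=(A,C,A,C)$ to produce a periodic configuration excluded from the aperiodic subshift $\Xcal_{\Pcal,R}$. The only difference is that you spell out the minimality and freeness of $R$ and the non-invariance of an atom in slightly more detail than the paper, which simply asserts these facts.
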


\begin{proof}
The dynamical system $(\torus,\Z^2,R)$ is minimal.
The atom $P_{\tau_0}$ is invariant only under the trivial
translation.
Therefore, from Lemma~\ref{lem:symbolic-representation},
$\Pcal$ gives a symbolic representation of 
$(\torus,\Z^2,R)$.
From Proposition~\ref{prop:factor-map} , there exists a
factor map from $(\Xcal_{\Pcal,R},\Z^2,\sigma)$ to
$(\torus,\Z^2,R)$ and 
from Corollary~\ref{cor:max-equicontinuous-factor},
$(\torus, \Z^2,R)$ is the maximal equicontinuous
factor of $(\Xcal_{\Pcal,R},\Z^2,\sigma)$.

Since $R^{\be_1}$ and $R^{\be_2}$ are linearly independent irrational
rotations, we have
that $R$ is a free $\Z^2$-action.
Thus from Lemma~\ref{lem:minimal-aperiodic},
$\Xcal_{\Pcal,R}$ is minimal and aperiodic.
From Proposition~\ref{prop:measurable-conjugacy},
$\Xcal_{\Pcal,R}$ is uniquely ergodic thus strictly ergodic.
From Proposition~\ref{prop:is_wang_tiling},
we have $\Xcal_{\Pcal,R}\subseteq\Omega_{\T}$.
The set of Wang tiles $\T$ contains the tile $\tau_0=(A,C,A,C)$.
Let $w$ be the constant map $(m,n)\mapsto\tau_0$ for all $m,n\in\Z$.
    The configuration $w$ is valid and periodic, thus
    $w\in\Omega_{\T}\setminus\Xcal_{\Pcal,R}$.
\end{proof}

The two examples presented in this section show that
we can not expect
Theorem~\ref{thm:JR-max-equi-factor}
and
Theorem~\ref{thm:OmegaU-partition}
to hold for any given toral partition and $\Z^2$-rotation.
The characterization of toral partitions and $\Z^2$-rotations for which such results
hold is an open question.

\part{Wang shifts as model sets of cut and project schemes}\label{part:3}

This part is divided into three sections. 
Its goal is to show that occurrences of patterns in 
a minimal subshift of the Jeandel-Rao Wang shift 
and in the Wang shift $\Omega_\U$ 
are obtained as 4-to-2 cut and project schemes.

\section{Cut and project schemes and model sets}\label{sec:CPS}

In \cite{MR1456897}, the torus parametrization of three tiling dynamical systems was given.
We want to do similarly in the case of symbolic dynamical systems and in particular in the case of Wang shifts.
We recall from the more recent book \cite[\S 7.2]{MR3136260} the definition of cut and project
scheme and we reuse their notation.

\begin{definition}\label{def:CPS}
    A \defn{cut and project scheme} (CPS) is a triple $(\R^d,H,\Lcal)$ with
    a (compactly generated) locally compact Abelian group (LCAG) $H$, a lattice
    $\Lcal$ in $\R^d\times H$ and the two natural projections $\pi:\R^d\times
    H\to\R^d$ and $\pi_{\rm int}:\R^d\times H\to H$, subject to the conditions that
    $\pi|_\Lcal$ is injective and that $\pi_{\rm int}(\Lcal)$ is dense in $H$.
\end{definition}
A CPS is called \defn{Euclidean} when $H=\R^m$ for some $m\in\N$. 
A general CPS is summarized in the following diagram.
\[
\begin{tikzcd}
    \R^d & \R^d\times H \arrow[swap]{l}{\pi} \arrow{r}{\pi_{\rm int}} & H\\
    \pi(\Lcal) 
    \arrow[draw=none]{u}[sloped,auto=false]{\subset}
    \arrow[equal]{d}
    & \Lcal 
    \arrow[draw=none]{u}[sloped,auto=false]{\subset}
    \arrow[swap]{l}{1-1} \arrow{r}
    & \pi_{\rm int}(\Lcal) 
    \arrow[draw=none]{u}[sloped,auto=false]{\subset}
    \arrow[draw=none]{u}[swap]{\text{ dense}}
    \arrow[equal]{d}\\
    L \arrow{rr}{\star} && L^\star
\end{tikzcd}
\]
The image is denoted $L=\pi(\Lcal)$. Since for a given CPS, $\pi$ is a bijection between
$\Lcal$ and $L$, there is a well-defined mapping $\star:L\to H$ given by
\[
    x\mapsto x^\star:=\pi_{\rm int}\left((\pi|_\Lcal)^{-1}(x)\right)
\]
where $(\pi|_\Lcal)^{-1}(x)$ is the unique point in the set $\Lcal\cap\pi^{-1}(x)$.
This mapping is called the \defn{star map} of the CPS. The $\star$-image of $L$
is denoted by $L^\star$. The set $\Lcal$ can be viewed as a diagonal embedding of $L$ as
\[
    \Lcal=\{(x,x^\star)\mid x\in L\}.
\]
For a given CPS $(\R^d,H,\Lcal)$ and a (general) set $A\subset H$,
\[
    \curlywedge(A):=\{x\in L\mid x^\star\in A\}
\]
is the projection set within the CPS. The set $A$ is called its \defn{acceptance
set}, \defn{window} or \defn{coding set}.
\begin{definition}\label{def:model-set}
If $A\subset H$ is a relatively compact set
with non-empty interior, the projection set $\curlywedge(A)$, or any translate
$t+\curlywedge(A)$ with $t\in\R^d$, is called a \defn{model set}. 
\end{definition}
A model set is termed \defn{regular} when $\mu_H(\partial A)=0$, where $\mu_H$ is
the Haar measure of $H$. If $L^\star\cap\partial A=\varnothing$, the model set
is called \defn{generic}. If the window is not in a generic position (meaning
that $L^\star\cap\partial A\neq\varnothing$), the corresponding model set is
called \defn{singular}.


The shape of the acceptance set $A$ is important and implies structure on the
model set $\Lambda=t+\curlywedge(A)$.
For example, if $A$ is relatively compact, $\Lambda$ has finite local
complexity and thus also is uniformly discrete; if $A^\circ\neq\varnothing$,
$\Lambda$ is relatively dense. If $\Lambda$ is a model set, it is also a Meyer
set, \cite[Prop.~7.5]{MR3136260}. For regular model set
$\Lambda=\curlywedge(A)$ with a compact window $A=\overline{A^\circ}$, it is
known \cite[Theorem 7.2]{MR3136260} that the points $\{x^\star\mid x\in\Lambda\}$
are uniformly distributed in $A$.

Linear repetitivity of model sets is an important notion.
Recall that a Delone set $Y\subseteq \R^d$ is called \defn{linearly repetitive}
if there exists a constant $C>0$ such that, for any $r\geq 1$, every patch of
size $r$ in $Y$ occurs in every ball of diameter $Cr$ in $\R^d$.
It was shown by Lagarias and Pleasants in \cite[Theorem 6.1]{MR1992666}
that linear repetitivity of a Delone set implies the existence of strict
uniform patch frequencies, equivalently the associated dynamical system on the
hull of the point set is strictly ergodic (minimal and uniquely ergodic).  As a
consequence \cite[Cor.~6.1]{MR1992666}, a linearly repetitive Delone set $X$ in
$\R^n$ has a unique autocorrelation measure $\gamma_X$. This measure $\gamma_X$
is a pure discrete measure supported on $X-X$. In particular $X$ is
diffractive.
A characterization of linearly repetitive model sets $\curlywedge(A)$
for cubical acceptance set $A$ was recently proved by Haynes, Koivusalo and
Walton \cite{MR3755878}.


%

\subsection*{Polygon exchange transformations}

We end this section with a concept that will be useful for the next two
sections. Suppose that $(\generictorus, \Z^2,R)$ is a dynamical system where
$R$ is a $\Z^2$-rotation on $\generictorus$.
The rotations $R^{\be_1}$ and $R^{\be_2}$ can be seen as polygon
exchange transformations \cite{MR3186232}
on a fundamental domain of $\generictorus$.

\begin{definition}\cite{alevy_kenyon_yi}
Let $X$ be a polygon together with
two topological partitions of $X$ into polygons
\[
    X=\bigcup_{k=0}^N P_k
     =\bigcup_{k=0}^N Q_k
\] 
such that for each $k$, $P_k$ and $Q_k$ are translation equivalent, i.e.,
there exists $v_k\in\R^2$ such that $P_k=Q_k+v_k$.
A \defn{polygon exchange transformation (PET)} is the piecewise translation
on $X$ defined for $x\in P_k$ by
$T(x) = x+v_k$.
The map is not defined for points $x\in\cup_{k=0}^N\partial P_k$.
\end{definition}

\section{A model set for the Jeandel-Rao Wang shift}

We want to describe the positions $Q\subseteq\Z^2$ of patterns in
configurations belonging to $\Xcal_{\Pcal_0,R_0}\subsetneq\Omega_0$.
Because of that, 
in the construction of a proper cut and project scheme,
we need to be careful in the choice of the locally compact Abelian group $H$
so that $\pi|_\Lcal$ is an injective map.
This is why we introduce 
the submodule $\Lambda=\langle(1,-1,0,0),(0,0,1,-1)\rangle_\Z$
and define the projections $\pi$ and $\pi_{\rm int}$ on $\R^4/\Lambda$ as:
\[
\begin{array}{rccc}
    \pi:&\R^4/\Lambda & \to & \R^2\\
        &(x_1,x_2,x_3,x_4) & \mapsto & (x_1+x_2,x_3+x_4)
\end{array}
\]
and
\[
\begin{array}{rccc}
    \pi_{\rm int}:&\R^4/\Lambda & \to & \R^2/\Gamma_0\\
        &(x_1,x_2,x_3,x_4) & \mapsto & 
        \left(x_1
             -\frac{1}{\varphi}x_2
             +\frac{1}{\varphi}x_4,
              x_3
             -(\varphi+2)x_4 \right)
\end{array}
\]
where $\varphi=\frac{1+\sqrt{5}}{2}$.
The product $\pi\times \pi_{\rm int}:\R^4/\Lambda\to\R^2\times\R^2/\Gamma_0$ 
of the projections is one-to-one and onto.
Therefore, the projections define
a Euclidean cut and project scheme
with $d=2$ and $H=\R^2/\Gamma_0$ 
on $\R^4/\Lambda \simeq \R^2\times H$.

Recall that we proved 
in Theorem~\ref{thm:JR-max-equi-factor}
that $\Xcal_{\Pcal_0,R_0}\subsetneq\Omega_0$
and that there exists a
factor map $f_0$ from $(\Xcal_{\Pcal_0,R_0},\Z^2,\sigma)$ to
$(\R^2/\Gamma_0,\Z^2,R_0)$.
Therefore any Jeandel-Rao configuration
$w\in\Xcal_{\Pcal_0,R_0}\subsetneq\Omega_0$ can be qualified as a
singular or generic according to 
whether $f_0(w)$ is in the set $\Delta_{\Pcal_0,R_0}\subset\R^2/\Gamma_0$ or not.

\begin{proof}[Proof of Theorem~\ref{thm:jeandel-rao-model-set}]
    Let $w\in \Xcal_{\Pcal_0,R_0}$.
    Let $\bx=(r,s)=r'(\varphi,0)+s'(1,\varphi+3)=f_0(w)\in\R^2/\Gamma_0$.
    We consider the lattice $\Lcal=\Z^4+(r'+s',-r'-s',
    s',-s')\subset\R^4/\Lambda$.
    We have that $\pi|_\Lcal$ is injective.
    Also $L=\pi(\Lcal)=\Z^2$ since $\pi(r'+s',-r'-s', s',-s')=\zero$.
    We also have that $\pi_{\rm int}(\Lcal)$ is dense in $H=\R^2/\Gamma_0$.
    Also $\pi_{\rm int}(r'+s',-r'-s', s',-s')=(r,s)$.

    Recall that 
    the $\Z^2$-rotation $R_0$ is defined
    on the torus $\R^2/\Gamma_0$ by $R_0^\bn(\bx) = \bx + \bn$
    for every $\bn\in\Z^2$.
    The maps $(R_0)^{\be_1}$
    and $(R_0)^{\be_2}$ can be seen as polygon exchange transformations
    on the fundamental domain $W=[0,\varphi)\times[0,\varphi+3)$ of
    $\R^2/\Gamma_0$ 
    (see Figure~\ref{fig:PET-for-JR}):
    \[
        (R_0)^{\be_1}(\bx) = 
        \begin{cases}
        \bx + v_a  & \text{ if } \bx \in P_a,\\
        \bx + v_b  & \text{ if } \bx \in P_b,
        \end{cases}
        \quad\text{ and }\quad
        (R_0)^{\be_2}(\bx) = 
        \begin{cases}
        \bx + v_c  & \text{ if } \bx \in P_c,\\
        \bx + v_d  & \text{ if } \bx \in P_d,\\
        \bx + v_e  & \text{ if } \bx \in P_e.
        \end{cases}
    \]
\begin{figure}[h]
\begin{center}
\begin{tikzpicture}[auto,scale=1.5]
\def\verticalaxis{
    \draw[-latex] (0,0) -- (0,3+\p+.2);
    \foreach \x/\y in
    {0/0, 1/1, 2/2,
        \p+1  /\varphi+1,
        \p+2  /\varphi+2,
        \p+3  /\varphi+3}
    \draw (.05,\x) -- (-.05,\x) node[left] {$\y$};}
\def\silentverticalaxis{
\draw[-latex] (0,-.1) -- (0,3+\p+.2);
\foreach \x/\y in
{0/0, 1/1, 2/2,
    \p+1  /\varphi+1,
    \p+2  /\varphi+2,
    \p+3  /\varphi+3}
\draw (.05,\x) -- (-.05,\x);}
\def\horizontalaxis{
    \draw[-latex] (0,0) -- (\p+.1,0);
    \foreach \x/\y in
    {0/0, 1/1,
        \p-1  /\frac{1}{\varphi},
        \p    /\varphi}
    \draw (\x,.05) -- (\x,-.05) node[below] {$\y$};}
\begin{scope}[xshift=0.0cm]
    \verticalaxis
    \horizontalaxis
    \draw (0,0) rectangle (\p,3+\p); 
    \draw (\p-1,0) -- (\p-1, 3+\p);
    \node at (.32, 2.31) {$P_a$};
    \node at (1.12, 2.31) {$P_b$};
\end{scope}
\begin{scope}[xshift=2.2cm]
    \silentverticalaxis
    \horizontalaxis
    \draw (0,0) rectangle (\p,3+\p); 
    \draw (1,0) -- (1, 3+\p);
    \node at (.5, 2.31) {$R_0^{\be_1}(P_b)$};
    \node[rotate=90] at (1.32, 2.31) {$R_0^{\be_1}(P_a)$};
\end{scope}
\begin{scope}[xshift=4.4cm]
    \silentverticalaxis
    \horizontalaxis
    \draw (0,0) rectangle (\p,3+\p); 
    \draw (0,\p+2) -- (\p, \p+2);
    \draw (1,\p+2) -- (1, \p+3);
    \node at (.8, 2.31) {$P_c$};
    \node at (.5, 4.1) {$P_d$};
    \node at (1.3, 4.1) {$P_e$};
\end{scope}
\begin{scope}[xshift=6.6cm]
    \silentverticalaxis
    \horizontalaxis
    \draw (0,0) rectangle (\p,3+\p); 
    \draw (0,1) -- (\p, 1);
    \draw (\p-1,0) -- (\p-1, 1);
    \node at (.8, 2.31) {$R_0^{\be_2}(P_c)$};
    \node at (1.1, .5) {$R_0^{\be_2}(P_d)$};
    \node[rotate=90] at (.3, .5) {$R_0^{\be_2}(P_e)$};
\end{scope}
\end{tikzpicture}
\end{center}
\caption{
    The maps $(R_0)^{\be_1}$
    and $(R_0)^{\be_2}$ can be seen as polygon exchange transformations
    on the fundamental domain $W=[0,\varphi)\times[0,\varphi+3)$ of
    $\R^2/\Gamma_0$.}
\label{fig:PET-for-JR}
\end{figure}

    The translations written in terms of the base of $\Z^2$ and $\Gamma_0$ and
    vice versa are:
    \begin{align*}
    v_a&=\be_1                          &\be_1&=v_a, \\
    v_b&=\be_1-(\varphi,0)              &\be_2&=v_c,\\
    v_c&=\be_2                          &(\varphi,0)&=v_a-v_b, \\
    v_d&=\be_2-(1,\varphi+3)+(\varphi,0)&(1,\varphi+3)&=v_a-v_b+v_c-v_d,\\
    v_e&=\be_2-(1,\varphi+3).
    \end{align*}
    Since $W$ is a fundamental domain for 
    $\Gamma_0=\langle(\varphi,0),(1,\varphi+3)\rangle_\Z$,
    by definition for every $\bx\in\R^2$, there exist unique $k,\ell\in\Z$ such
    that $\bx+k(\varphi,0)+\ell(1,\varphi+3)\in W$.
    Therefore, for every $(m,n)\in\Z^2$ there exist unique $k,\ell\in\Z$ such
    that the following holds
    \begin{align*}
        R_0^{(m,n)}(r,s)
            &=(r,s) + (m,n) \bmod\Gamma_0\\
            &=r'(\varphi,0)+s'(1,\varphi+3) + m\be_1 + n\be_2 +k(\varphi,0)+\ell(1,\varphi+3)\in W\\
            &=m\be_1 + n\be_2 +(r'+k)(\varphi,0)+(s'+\ell)(1,\varphi+3)\\
            &=mv_a + n v_c +(r'+k)(v_a-v_b)+(s'+\ell)(v_a-v_b+v_c-v_d)\\
            &=(m+r'+k+s'+\ell)v_a -(r'+k+s'+\ell)v_b \\
                 &\hspace{2cm}  +(n+s'+\ell)v_c- (s'+\ell) v_d\\
            &= \pi_{\rm int}((m+r'+k+s'+\ell,-r'-k-s'-\ell,\\
                  &\hspace{2cm} n+s'+\ell,-s'-\ell)+\Lambda)
            \in\pi_{\rm int}(\Lcal).
    \end{align*}
    Notice that the projection into the physical space is
    \[
    \pi((m+r'+k+s'+\ell,-r'-k-s'-\ell,
        n+s'+\ell,-s'-\ell)+\Lambda) = (m,n).
    \]
    Thus
    \[
    (m+r'+k+s'+\ell,-r'-k-s'-\ell,
        n+s'+\ell,-s'-\ell)+\Lambda
    = (\pi|_\Lcal)^{-1}(m,n)
    \]
    so that
    \begin{align*}
        (m,n)^\star &=\pi_{\rm int}\left((\pi|_\Lcal)^{-1}(m,n)\right)\\
            &= \pi_{\rm int}((m+r'+k+s'+\ell,-r'-k-s'-\ell,
                                n+s'+\ell,-s'-\ell)+\Lambda)\\
            & = R_0^{(m,n)}(r,s).
    \end{align*}

    Let $p=\pi_S(w)\in\T_0^S$
    be a pattern occurring in the configuration $w$
    for some subset $S\subset\Z^2$.
    Let $[p]$ be the cylinder associated with the pattern $p$
    and $A=f_0([p])\subset W$ be the acceptance set.
    The set $A$ is a polygon by construction, see
    Equation~\eqref{eq:cylindre-intersection-polygones}.
    Therefore the Lebesgue measure of $\partial A$ is zero.
    Assume for now that $w$ is a generic configuration.
    Since $R_0^{(m,n)}(r,s)=(m,n)^\star \notin \partial A$ for every
    $m,n\in\Z$, the set $Q\subseteq\Z^2$ of occurrences of $p$ in $w$ is
    \[
        Q = \left\{(m,n)\in\Z^2\,\middle|\, R_0^{(m,n)}(r,s)\in A\right\}
          = \{(m,n)\in L\mid (m,n)^\star\in A\}
          = \curlywedge(A)
    \]
    which is a regular and generic model set.
    If $w$ is a singular configuration, then $w=\scSymbRep_0^\bv(r,s)$ for some
    $\bv\in\R^2\setminus\Theta^{\Pcal_0}$.
    If $A=f_0([p])\subset W$,
    then we take $A'=\lim_{\epsilon\to0} A\cap(A-\epsilon \bv)$
    as acceptance set and we have
    \[
        Q = \left\{(m,n)\in\Z^2\,\middle|\, R_0^{(m,n)}(r,s)\in A'\right\}
          = \{(m,n)\in L\mid (m,n)^\star\in A'\}
          = \curlywedge(A')
    \]
    which is a regular and singular model set.
\end{proof}

\section{A model set for the Wang shift $\Omega_\U$ defined by 19 tiles}


As in the previous section we use
the submodule $\Lambda=\langle(1,-1,0,0),(0,0,1,-1)\rangle_\Z$
and define the projections on $\R^4/\Lambda$ as:
\[
\begin{array}{rccc}
    \pi:&\R^4/\Lambda & \to & \R^2\\
        &(x_1,x_2,x_3,x_4) & \mapsto & (x_1+x_2,x_3+x_4)
\end{array}
\]
and
\[
\begin{array}{rccc}
    \pi_{\rm int}:&\R^4/\Lambda & \to & \torus\\
        &(x_1,x_2,x_3,x_4) & \mapsto & 
        \left(\frac{1}{\varphi^2}x_1-\frac{1}{\varphi}x_2,
              \frac{1}{\varphi^2}x_3-\frac{1}{\varphi}x_4\right)
\end{array}
\]
where $\varphi=\frac{1+\sqrt{5}}{2}$.
The product $\pi\times \pi_{\rm int}:\R^4/\Lambda\to\R^2\times\torus$ 
of the projections is one-to-one and onto
so we may identify the domain of the projections as $\R^4/\Lambda \simeq
\R^2\times\torus$, in agreement with the definition of cut and
project scheme.

Note that if $\Lcal=\Z^4\subset\R^4/\Lambda$, then
$\pi|_\Lcal$ is injective and $L=\pi(\Lcal)=\Z^2$.
If the acceptance set is the whole cubical window $A=\torus$, we obtain
a description of the positions of patterns in a configuration
as a model set, that is, $\Z^2=\curlywedge(A)$.
In the result below, noncubical acceptance sets $A\subset\torus$ are used to
describe the positions of patterns occurring in configurations.

Recall that we proved 
among other things 
in Theorem~\ref{thm:OmegaU-partition} 
that $\Xcal_{\Pcal_\U,R_\U}=\Omega_\U$
and that there exists a
factor map $f_\U$ from $(\Xcal_{\Pcal_\U,R_\U},\Z^2,\sigma)$ to
$(\torus,\Z^2,R_\U)$.
Therefore any configuration $w\in\Omega_\U$ can be qualified as
singular or generic according to 
whether $f_\U(w)$ is in the set $\Delta_{\Pcal_\U,R_\U}\subset\torus$ or not.

\begin{theorem}\label{thm:OmegaU-is-model-set}
    Let $\U$ be the self-similar set of Wang tiles shown in 
    Figure~\ref{fig:19-wang-tile-set}.
    There exists a cut and project scheme such that
    for every configuration $w\in\Omega_\U$,
    the set $Q\subseteq\Z^2$ of occurrences of a pattern in $w$
    is a regular model set.
    If $w$ is a generic (resp.~singular) configuration,
    then $Q$ is a generic (resp.~singular) model set.
\end{theorem}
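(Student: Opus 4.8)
The plan is to mirror, essentially line for line, the proof of Theorem~\ref{thm:JR-cut-and-project}, taking advantage of the fact that here the internal space is the standard torus $\torus=\R^2/\Z^2$ rather than $\R^2/\Gamma_0$, which removes the fundamental-domain bookkeeping with a nontrivial lattice. First I would use Theorem~\ref{thm:OmegaU-partition}(i) to write $\Omega_\U=\Xcal_{\Pcal_\U,R_\U}$ and take the factor map $f_\U$ of Theorem~\ref{thm:OmegaU-partition}(iii). For a fixed $w\in\Omega_\U$, set $\bx=(a,b)=f_\U(w)\in\torus$ and define the lattice $\Lcal=\Z^4+(a,-a,b,-b)\subset\R^4/\Lambda$. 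Since the statement quantifies over \emph{all} of $\Omega_\U$ (and not over a proper minimal subshift, as in the Jeandel--Rao case), no extra care is needed beyond choosing $\Lcal$ coherently with $\bx$.

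Next I would check the cut and project scheme conditions. Injectivity of $\pi|_\Lcal$ follows because $\Z^4\cap\ker\pi=\Lambda$ inside $\R^4$, so $\pi$ is injective on the image of $\Z^4$ in $\R^4/\Lambda$, hence on any translate $\Lcal$; consequently $L=\pi(\Lcal)=\pi(\Z^4)=\Z^2$ since $\pi(a,-a,b,-b)=\zero$. Density of $\pi_{\rm int}(\Lcal)$ in $\torus$ is immediate from the irrationality of $\varphi$, as $\tfrac1{\varphi^2}$ and $\tfrac1\varphi$ generate a dense subgroup of $\R/\Z$. As in the Jeandel--Rao case I would then realize $R_\U^{\be_1}$ and $R_\U^{\be_2}$ as polygon exchange transformations of the fundamental domain $[0,1)^2$, the translation by $\varphi^{-2}\be_1$ (resp.\ $\varphi^{-2}\be_2$) splitting the square along the line $x_1=\varphi^{-1}$ (resp.\ $x_2=\varphi^{-1}$). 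The heart of the computation is the identification of the star map with the rotation orbit. Using the relation $\tfrac1{\varphi^2}+\tfrac1\varphi=1$, which gives $\tfrac{m}{\varphi^2}\equiv-\tfrac{m}{\varphi}\pmod 1$, a direct evaluation of $\pi_{\rm int}$ on the unique preimage $(\pi|_\Lcal)^{-1}(m,n)$ yields
\begin{align*}
(m,n)^\star
&=\pi_{\rm int}\bigl((\pi|_\Lcal)^{-1}(m,n)\bigr)
=\Bigl(a+\tfrac{m}{\varphi^2},\,b+\tfrac{n}{\varphi^2}\Bigr)
= R_\U^{(m,n)}(\bx)
\end{align*}
in $\torus$, exactly the analogue of the identity $(m,n)^\star=R_0^{(m,n)}(r,s)$ obtained for Jeandel--Rao.

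With the star map identified, the remainder is purely formal. For a pattern $p=\pi_S(w)$ occurring in $w$, I would take the acceptance set $A=f_\U([p])\subset\torus$, which by Equation~\eqref{eq:cylindre-intersection-polygones} is a finite intersection of translated closed atoms and hence a polygon, so $\mu_H(\partial A)=0$ and the resulting model set is \emph{regular}. If $w$ is generic, then $R_\U^{(m,n)}(\bx)\notin\partial A$ for all $(m,n)$, and the set of occurrences is
\[
Q=\{(m,n)\in\Z^2\mid R_\U^{(m,n)}(\bx)\in A\}
=\{(m,n)\in L\mid (m,n)^\star\in A\}
=\curlywedge(A),
\]
a generic model set. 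If $w$ is singular, then $w=\scSymbRep_\U^\bv(\bx)$ for some $\bv\in\R^2\setminus\Theta^{\Pcal_\U}$, and replacing $A$ by the perturbed window $A'=\lim_{\epsilon\to\zero}A\cap(A-\epsilon\bv)$ again yields $Q=\curlywedge(A')$, now a singular model set.

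I expect the only genuinely nontrivial step to be the verification of the star-map identity, i.e.\ checking that the coordinates chosen for $\Lcal$ are consistent with both projections and that the arithmetic with $\varphi^{-2}=2-\varphi$ closes up modulo $\Z^2$; everything else (injectivity, density, the polygonality of $A$, and the generic/singular dichotomy) transfers verbatim from Theorem~\ref{thm:JR-cut-and-project}. A minor point to confirm is that the polygon exchange description of $R_\U^{\be_1},R_\U^{\be_2}$ is compatible with the chosen coordinatization of $\Lcal$, but this is again formally identical to the earlier computation and poses no real difficulty.
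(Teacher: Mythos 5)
Your proposal is correct and follows essentially the same route as the paper's proof: the same lattice $\Lcal=\Z^4+(r,-r,s,-s)$ in $\R^4/\Lambda$, the same verification of the CPS axioms, the same identification $(m,n)^\star=R_\U^{(m,n)}(\bx)$ via $\tfrac1{\varphi^2}+\tfrac1\varphi=1$, and the same windows $A=f_\U([p])$ and $A'=\lim_{\epsilon\to 0}A\cap(A-\epsilon\bv)$ for the generic/singular dichotomy. The only cosmetic difference is that you evaluate $\pi_{\rm int}$ on the preimage directly, whereas the paper routes the computation through the polygon exchange description of $R_\U^{\be_1},R_\U^{\be_2}$ and the $k,\ell$ fundamental-domain bookkeeping; both yield the same identity.
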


\begin{proof}
    Let $w\in\Omega_\U$ be a valid configuration and 
    let $\bx=(r,s)=f_\U(w)\in\torus$.
    We consider $\Lcal=\Z^4+(r,-r,s,-s)\subset\R^4/\Lambda$.
    We have that $\pi|_\Lcal$ is injective
    and $L=\pi(\Lcal)=\Z^2$.
    We also have that $\pi_{\rm int}(\Lcal)$ is dense in $H=\torus$.
    Also $\pi_{\rm int}(r,-r,s,-s)=(r,s)$.

    Since $\pi$ is a bijection between
    $\Lcal$ and $L$, there is a well-defined mapping $\star:L\to H$ given by
    \[
        x\mapsto x^\star:=\pi_{\rm int}\left((\pi|_\Lcal)^{-1}(x)\right)
    \]
    where $(\pi|_\Lcal)^{-1}(x)$ is the unique point in the set
    $\Lcal\cap\pi^{-1}(x)$.

    Recall that 
    the $\Z^2$-rotation $R_\U$ is defined
    on the torus $\torus$ by
    $R_\U^\bn(\bx) = \bx + \varphi^{-2}\bn$
    for every $\bn\in\Z^2$.
    The maps $(R_\U)^{\be_1}$
    and $(R_\U)^{\be_2}$ can be seen as polygon exchange transformations
    on the fundamental domain $W=[0,1)^2$ of $\torus$:
    \[
        (R_\U)^{\be_1}(\bx) = 
        \begin{cases}
        \bx + v_a  & \text{ if } \bx \in P_a,\\
        \bx + v_b  & \text{ if } \bx \in P_b,
        \end{cases}
        \quad\text{ and }\quad
        (R_\U)^{\be_2}(\bx) = 
        \begin{cases}
        \bx + v_c  & \text{ if } \bx \in P_c,\\
        \bx + v_d  & \text{ if } \bx \in P_d.
        \end{cases}
    \]
    with $v_a=(\varphi^{-2},0)$, $v_b=(-\varphi^{-1},0)$,
    $v_c=(0,\varphi^{-2})$ and $v_d=(0,-\varphi^{-1})$,
    see Figure~\ref{fig:PET-for-U}.
\begin{figure}[h]
\begin{center}
\def\p{1.61803398874989}   
\def\t{0.02}   
\begin{tikzpicture}[scale=2.4]
\begin{scope}
\draw (-\t,0) -- (1+\t,0);
\foreach \x/\label in {
    0/0,
    1/1,
    \p-1  /\frac{1}{\varphi}}
\draw (\x,0+\t) -- (\x,0-\t) node[below] {$\label$};
\draw (0,-\t) -- (0,1+\t);
\foreach \y/\label in {
    0/0,
    1/1,
    }
    \draw (\t,\y) -- (-\t,\y) node[left] {$\label$};
\foreach \y in {0,1}
\draw (0,\y) -- (1,\y);
\foreach \x in {0,\p-1,1}
\draw (\x,0) -- (\x,1);
\node at (.5*\p-.5,.5) {$P_a$};
\node at (.5*\p   ,.5) {$P_b$};
\end{scope}

\begin{scope}[xshift=15mm]
\draw (-\t,0) -- (1+\t,0);
\foreach \x/\label in {
    0/0,
    1/1,
    2-\p  /\frac{1}{\varphi^{2}},
    }
\draw (\x,0+\t) -- (\x,0-\t) node[below] {$\label$};
\draw (0,-\t) -- (0,1+\t);
\foreach \y/\label in {
    0/0,
    1/1,
    }
    \draw (\t,\y) -- (-\t,\y) node[left] {$\label$};
\foreach \y in {0,1}
\draw (0,\y) -- (1,\y);
\foreach \x in {0,2-\p,1}
\draw (\x,0) -- (\x,1);
\node[rotate=90] at (1-.5*\p   ,.5) {$R_\U^{\be_1}(P_b)$};
\node at (-.5*\p+1.5,.5) {$R_\U^{\be_1}(P_a)$};
\end{scope}

\begin{scope}[xshift=30mm]
\draw (-\t,0) -- (1+\t,0);
\foreach \x/\label in {
    0/0,
    1/1,
    }
\draw (\x,0+\t) -- (\x,0-\t) node[below] {$\label$};
\draw (0,-\t) -- (0,1+\t);
\foreach \y/\label in {
    0/0,
    1/1,
    \p-1  /\frac{1}{\varphi}
    }
\draw (\t,\y) -- (-\t,\y) node[left] {$\label$};
\foreach \y in {0,\p-1,1}
\draw (0,\y) -- (1,\y);
\foreach \x in {0,1}
\draw (\x,0) -- (\x,1);
\node at (.5,.5*\p-.5) {$P_c$};
\node at (.5,.5*\p   ) {$P_d$};
\end{scope}

\begin{scope}[xshift=45mm]
\draw (-\t,0) -- (1+\t,0);
\foreach \x/\label in {
    0/0,
    1/1,
    }
\draw (\x,0+\t) -- (\x,0-\t) node[below] {$\label$};
\draw (0,-\t) -- (0,1+\t);
\foreach \y/\label in {
    0/0,
    1/1,
    2-\p  /\frac{1}{\varphi^{2}},
    }
\draw (\t,\y) -- (-\t,\y) node[left] {$\label$};
\foreach \y in {0,2-\p,1}
\draw (0,\y) -- (1,\y);
\foreach \x in {0,1}
\draw (\x,0) -- (\x,1);
\node at (.5,1-.5*\p   ) {$R_\U^{\be_2}(P_d)$};
\node at (.5,-.5*\p+1.5) {$R_\U^{\be_2}(P_c)$};
\end{scope}
\end{tikzpicture}
\end{center}
\caption{
    The maps $(R_\U)^{\be_1}$
    and $(R_\U)^{\be_2}$ can be seen as polygon exchange transformations
    on the fundamental domain $W=[0,1)^2$ of $\torus$.}
\label{fig:PET-for-U}
\end{figure}

    Notice that the base of $\Z^2$ can be written in
    terms of the translations as
    \[
        \be_1 = v_a - v_b
        \quad\text{ and }\quad
        \be_2 = v_c - v_d.
    \]
    Since $W$ is a fundamental domain for $\Z^2=\langle\be_1,\be_2\rangle_\Z$,
    for every $\bx\in\R^2$ there exist unique $k,\ell\in\Z$ such
    that $\bx+k\be_1+\ell\be_2\in W$.
    Therefore, for every $(m,n)\in\Z^2$ there exist unique $k,\ell\in\Z$ such
    that the following holds
    \begin{align*}
        R_\U^{(m,n)}(r,s)
            &=(r,s) + \frac{1}{\varphi^2}(m,n) \bmod\Z^2\\
            &=r\be_1 + s\be_2 + m v_a + n v_c \bmod\Z^2\\
            &=r\be_1 + s\be_2 + m v_a 
                    + n v_c +k\be_1+\ell\be_2 \in W\\
            &=m v_a + n v_c +(r+k)(v_a-v_b)+(s+\ell)(v_c-v_d)\\
            &=(m+r+k) v_a 
                    - (r+k) v_b
                    + (n+s+\ell) v_c
                    - (s+\ell) v_d\\
            &= \pi_{\rm int}((m+r+k,-r-k,n+s+\ell,-s-\ell)+\Lambda)
            \in\pi_{\rm int}(\Lcal).
    \end{align*}
    Notice that the projection into the physical space is
    \[
    \pi((m+r+k,-r-k,n+s+\ell,-s-\ell)+\Lambda) = (m,n).
    \]
    Thus
    \[
    (m+r+k,-r-k,n+s+\ell,-s-\ell) + \Lambda
    = (\pi|_{\Lcal})^{-1}(m,n)
    \]
    so that
    \begin{align*}
        (m,n)^\star &=\pi_{\rm int}\left((\pi|_{\Lcal})^{-1}(m,n)\right)\\
            &= \pi_{\rm int}\left((m+r+k,-r-k,n+s+\ell,-s-\ell)
                                  +\Lambda\right)\\
            & = R_\U^{(m,n)}(r,s)
            = (\{r+\varphi m\},\{s+\varphi n\})
    \end{align*}
    where $\{x\}=x-\lfloor x\rfloor$ is the fractional part of $x$.

    Let $p=\pi_S(w)\in\U^S$
    be a pattern occurring in the configuration $w$
    for some subset $S\subset\Z^2$.
    Let $[p]$ be the cylinder associated with the pattern $p$
    and $A=f_\U([p])\subset W$ be the acceptance set.
    The set $A$ is a polygon by construction, see
    Equation~\eqref{eq:cylindre-intersection-polygones}.
    Therefore the Lebesgue measure of $\partial A$ is zero.
    Assume for now that $w$ is a generic configuration.
    Since $R_\U^{(m,n)}(r,s)=(m,n)^\star \notin \partial A$ for every
    $m,n\in\Z$, the set $Q\subseteq\Z^2$ of occurrences of $p$ in $w$ is
    \[
        Q = \left\{(m,n)\in\Z^2\,\middle|\, R_\U^{(m,n)}(r,s)\in A\right\}
          = \{(m,n)\in L\mid (m,n)^\star\in A\}
          = \curlywedge(A)
    \]
    which is a regular and generic model set.
    If $w$ is a singular configuration, then $w=\scSymbRep_\U^\bv(r,s)$ for some
    $\bv\in\R^2\setminus\Theta^{\Pcal_\U}$.
    If $A=f_\U([p])\subset W$,
    then we take $A'=\lim_{\epsilon\to0} A\cap(A-\epsilon \bv)$
    as acceptance set and we have
    \[
        Q = \left\{(m,n)\in\Z^2\,\middle|\, R_\U^{(m,n)}(r,s)\in A'\right\}
          = \{(m,n)\in L\mid (m,n)^\star\in A'\}
          = \curlywedge(A')
    \]
    which is a regular and singular model set.
\end{proof}

%

In \cite{MR3978536}, $\Omega_\U$ was proved to be self-similar
being invariant under the application of an expansive and primitive
substitution. It follows that $\Omega_\U$ is linearly repetitive.
Based on \cite{MR3755878}, an alternate proof of linear repetitivity of
$\Omega_\U$ could be obtained now that $\Omega_\U$ is described as a model set.
Some more work has to be done as the characterization of linearly repetitive 
model sets provided in
\cite{MR3755878} is stated for cubical windows only.

In the present work, we made the choice of uniform $1\times 1$ size
for Wang tiles but we can make the following remark on the use of other
rectangular shapes and stone inflations.

\begin{remark}
    To use the natural size for Wang tiles in $\U$ as stone
inflation deduced from its self-similarity, see \cite[\S 7]{MR3978536}, one must use 
\[
\begin{array}{rccc}
    \pi':&\R^4 & \to & \R^2\\
        &(x_1,x_2,x_3,x_4) & \mapsto &
        (x_1+\frac{1}{\varphi}x_2,x_3+\frac{1}{\varphi}x_4).
\end{array}
\]
as projection into the physical space.
In this case, $\pi'|_\Lcal$ is injective making it a proper cut and project
scheme.
Another way to construct the cut and project scheme is to use
the Minkowski embedding of $\Z[\sqrt{5}]\times\Z[\sqrt{5}]$
\[
    \Lcal=\left\{(x,y,x^\star,y^\star) \,\middle|\, 
                    x,y\in\Z[\sqrt{5}]\right\}
\]
    where the star map $\star$ corresponds to the algebraic conjugation
    $(\sqrt{5})^\star=-\sqrt{5}$
    in the quadratic field $\Q(\sqrt{5})$, see \cite[\S 7]{MR3136260}.
    In this setup, the natural window to be used should be
    $W=[-1,\varphi-1)\times[-1,\varphi-1)$
    instead of $[0,1)\times[0,1)$ following known construction
    in the Fibonacci case.
    We do not provide this construction here.
\end{remark}

%
%

\part*{References and appendix}

\bibliographystyle{myalpha} 
\bibliography{../biblio}

\newpage
\section*{Appendix -- A DIY Puzzle to illustrate the results}

We encode the 11 Jeandel-Rao tiles into geometrical shapes, see
Figure~\ref{fig:JR-tile-set-geometric}, where each integer color in
$\{0,1,2,3,4\}$ is replaced by an equal number of triangular or circular bumps.

Print one or more copies of this page and cut each of the 25 tiles shown
in Figure~\ref{fig:5x5-JR-pattern} with scissors.
Use the tiles and the \defn{Universal solver for Jeandel-Rao Wang shift} shown in
Figure~\ref{fig:universal-JR-tiling-solver} to construct every pattern seen in
the proper minimal subshift $\Xcal_{\Pcal_0,R_0}\subsetneq\Omega_0$ 
of the Jeandel-Rao Wang shift.


\begin{figure}[h]
\begin{center}
\begin{tikzpicture}
[scale=3,ultra thick]
\tikzstyle{every node}=[font=\bfseries\LARGE]
\node[black] at (0.5, 0.5) {6};
\draw[blue] (0, 0) -- ++ (0,1);
\draw[blue] (0, 0) -- ++ (.35,0) -- ++ (.15,.20) -- ++ (.15,-.20) -- ++ (.35,0);
\node[black] at (0.5, 1.5) {1};
\draw[blue] (0, 1) -- ++ (0,.25) arc (-90:90:.1) -- ++ (0,.1) arc (-90:90:.1) -- ++ (0,.25);
\draw[blue] (0, 1) -- ++ (1,0);
\node[black] at (0.5, 2.5) {7};
\draw[blue] (0, 2) -- ++ (0,1);
\draw[blue] (0, 2) -- ++ (.3,0) -- ++ (.1,.15) -- ++ (.1,-.15) -- ++ (.1,.15) -- ++ (.1,-.15) -- ++ (.3,0);
\node[black] at (0.5, 3.5) {2};
\draw[blue] (0, 3) -- ++ (0,.15) arc (-90:90:.1) -- ++ (0,.05) arc (-90:90:.1) -- ++ (0,.05) arc (-90:90:.1) -- ++ (0,.15);
\draw[blue] (0, 3) -- ++ (.35,0) -- ++ (.15,.20) -- ++ (.15,-.20) -- ++ (.35,0);
\node[black] at (0.5, 4.5) {5};
\draw[blue] (0, 5) -- ++ (.35,0) -- ++ (.15,.20) -- ++ (.15,-.20) -- ++ (.35,0);
\draw[blue] (0, 4) -- ++ (0,.15) arc (-90:90:.1) -- ++ (0,.05) arc (-90:90:.1) -- ++ (0,.05) arc (-90:90:.1) -- ++ (0,.15);
\draw[blue] (0, 4) -- ++ (.35,0) -- ++ (.15,.20) -- ++ (.15,-.20) -- ++ (.35,0);
\node[black] at (1.5, 0.5) {6};
\draw[blue] (1, 0) -- ++ (0,1);
\draw[blue] (1, 0) -- ++ (.35,0) -- ++ (.15,.20) -- ++ (.15,-.20) -- ++ (.35,0);
\node[black] at (1.5, 1.5) {1};
\draw[blue] (1, 1) -- ++ (0,.25) arc (-90:90:.1) -- ++ (0,.1) arc (-90:90:.1) -- ++ (0,.25);
\draw[blue] (1, 1) -- ++ (1,0);
\node[black] at (1.5, 2.5) {3};
\draw[blue] (1, 2) -- ++ (0,.15) arc (-90:90:.1) -- ++ (0,.05) arc (-90:90:.1) -- ++ (0,.05) arc (-90:90:.1) -- ++ (0,.15);
\draw[blue] (1, 2) -- ++ (.3,0) -- ++ (.1,.15) -- ++ (.1,-.15) -- ++ (.1,.15) -- ++ (.1,-.15) -- ++ (.3,0);
\node[black] at (1.5, 3.5) {8};
\draw[blue] (1, 3) -- ++ (0,.35) arc (-90:90:.15) -- ++ (0,.35);
\draw[blue] (1, 3) -- ++ (.3,0) -- ++ (.1,.15) -- ++ (.1,-.15) -- ++ (.1,.15) -- ++ (.1,-.15) -- ++ (.3,0);
\node[black] at (1.5, 4.5) {7};
\draw[blue] (1, 5) -- ++ (.35,0) -- ++ (.15,.20) -- ++ (.15,-.20) -- ++ (.35,0);
\draw[blue] (1, 4) -- ++ (0,1);
\draw[blue] (1, 4) -- ++ (.3,0) -- ++ (.1,.15) -- ++ (.1,-.15) -- ++ (.1,.15) -- ++ (.1,-.15) -- ++ (.3,0);
\node[black] at (2.5, 0.5) {7};
\draw[blue] (2, 0) -- ++ (0,1);
\draw[blue] (2, 0) -- ++ (.3,0) -- ++ (.1,.15) -- ++ (.1,-.15) -- ++ (.1,.15) -- ++ (.1,-.15) -- ++ (.3,0);
\node[black] at (2.5, 1.5) {0};
\draw[blue] (2, 1) -- ++ (0,.25) arc (-90:90:.1) -- ++ (0,.1) arc (-90:90:.1) -- ++ (0,.25);
\draw[blue] (2, 1) -- ++ (.35,0) -- ++ (.15,.20) -- ++ (.15,-.20) -- ++ (.35,0);
\node[black] at (2.5, 2.5) {9};
\draw[blue] (2, 2) -- ++ (0,.35) arc (-90:90:.15) -- ++ (0,.35);
\draw[blue] (2, 2) -- ++ (.1,0) -- ++ (.1,.15) -- ++ (.1,-.15) -- ++ (.1,.15) -- ++ (.1,-.15) -- ++ (.1,.15) -- ++ (.1,-.15) -- ++ (.1,.15) -- ++ (.1,-.15) -- ++ (.1,0);
\node[black] at (2.5, 3.5) {7};
\draw[blue] (2, 3) -- ++ (0,1);
\draw[blue] (2, 3) -- ++ (.3,0) -- ++ (.1,.15) -- ++ (.1,-.15) -- ++ (.1,.15) -- ++ (.1,-.15) -- ++ (.3,0);
\node[black] at (2.5, 4.5) {5};
\draw[blue] (2, 5) -- ++ (.35,0) -- ++ (.15,.20) -- ++ (.15,-.20) -- ++ (.35,0);
\draw[blue] (2, 4) -- ++ (0,.15) arc (-90:90:.1) -- ++ (0,.05) arc (-90:90:.1) -- ++ (0,.05) arc (-90:90:.1) -- ++ (0,.15);
\draw[blue] (2, 4) -- ++ (.35,0) -- ++ (.15,.20) -- ++ (.15,-.20) -- ++ (.35,0);
\node[black] at (3.5, 0.5) {4};
\draw[blue] (3, 0) -- ++ (0,.15) arc (-90:90:.1) -- ++ (0,.05) arc (-90:90:.1) -- ++ (0,.05) arc (-90:90:.1) -- ++ (0,.15);
\draw[blue] (3, 0) -- ++ (.2,0) -- ++ (.1,.15) -- ++ (.1,-.15) -- ++ (.1,.15) -- ++ (.1,-.15) -- ++ (.1,.15) -- ++ (.1,-.15) -- ++ (.2,0);
\node[black] at (3.5, 1.5) {0};
\draw[blue] (3, 1) -- ++ (0,.25) arc (-90:90:.1) -- ++ (0,.1) arc (-90:90:.1) -- ++ (0,.25);
\draw[blue] (3, 1) -- ++ (.35,0) -- ++ (.15,.20) -- ++ (.15,-.20) -- ++ (.35,0);
\node[black] at (3.5, 2.5) {9};
\draw[blue] (3, 2) -- ++ (0,.35) arc (-90:90:.15) -- ++ (0,.35);
\draw[blue] (3, 2) -- ++ (.1,0) -- ++ (.1,.15) -- ++ (.1,-.15) -- ++ (.1,.15) -- ++ (.1,-.15) -- ++ (.1,.15) -- ++ (.1,-.15) -- ++ (.1,.15) -- ++ (.1,-.15) -- ++ (.1,0);
\node[black] at (3.5, 3.5) {3};
\draw[blue] (3, 3) -- ++ (0,.15) arc (-90:90:.1) -- ++ (0,.05) arc (-90:90:.1) -- ++ (0,.05) arc (-90:90:.1) -- ++ (0,.15);
\draw[blue] (3, 3) -- ++ (.3,0) -- ++ (.1,.15) -- ++ (.1,-.15) -- ++ (.1,.15) -- ++ (.1,-.15) -- ++ (.3,0);
\node[black] at (3.5, 4.5) {7};
\draw[blue] (3, 5) -- ++ (.35,0) -- ++ (.15,.20) -- ++ (.15,-.20) -- ++ (.35,0);
\draw[blue] (3, 4) -- ++ (0,1);
\draw[blue] (3, 4) -- ++ (.3,0) -- ++ (.1,.15) -- ++ (.1,-.15) -- ++ (.1,.15) -- ++ (.1,-.15) -- ++ (.3,0);
\node[black] at (4.5, 0.5) {5};
\draw[blue] (5, 0) -- ++ (0,1);
\draw[blue] (4, 0) -- ++ (0,.15) arc (-90:90:.1) -- ++ (0,.05) arc (-90:90:.1) -- ++ (0,.05) arc (-90:90:.1) -- ++ (0,.15);
\draw[blue] (4, 0) -- ++ (.35,0) -- ++ (.15,.20) -- ++ (.15,-.20) -- ++ (.35,0);
\node[black] at (4.5, 1.5) {0};
\draw[blue] (5, 1) -- ++ (0,.25) arc (-90:90:.1) -- ++ (0,.1) arc (-90:90:.1) -- ++ (0,.25);
\draw[blue] (4, 1) -- ++ (0,.25) arc (-90:90:.1) -- ++ (0,.1) arc (-90:90:.1) -- ++ (0,.25);
\draw[blue] (4, 1) -- ++ (.35,0) -- ++ (.15,.20) -- ++ (.15,-.20) -- ++ (.35,0);
\node[black] at (4.5, 2.5) {9};
\draw[blue] (5, 2) -- ++ (0,.35) arc (-90:90:.15) -- ++ (0,.35);
\draw[blue] (4, 2) -- ++ (0,.35) arc (-90:90:.15) -- ++ (0,.35);
\draw[blue] (4, 2) -- ++ (.1,0) -- ++ (.1,.15) -- ++ (.1,-.15) -- ++ (.1,.15) -- ++ (.1,-.15) -- ++ (.1,.15) -- ++ (.1,-.15) -- ++ (.1,.15) -- ++ (.1,-.15) -- ++ (.1,0);
\node[black] at (4.5, 3.5) {10};
\draw[blue] (5, 3) -- ++ (0,.15) arc (-90:90:.1) -- ++ (0,.05) arc (-90:90:.1) -- ++ (0,.05) arc (-90:90:.1) -- ++ (0,.15);
\draw[blue] (4, 3) -- ++ (0,.35) arc (-90:90:.15) -- ++ (0,.35);
\draw[blue] (4, 3) -- ++ (.3,0) -- ++ (.1,.15) -- ++ (.1,-.15) -- ++ (.1,.15) -- ++ (.1,-.15) -- ++ (.3,0);
\node[black] at (4.5, 4.5) {4};
\draw[blue] (5, 4) -- ++ (0,.15) arc (-90:90:.1) -- ++ (0,.05) arc (-90:90:.1) -- ++ (0,.05) arc (-90:90:.1) -- ++ (0,.15);
\draw[blue] (4, 5) -- ++ (.35,0) -- ++ (.15,.20) -- ++ (.15,-.20) -- ++ (.35,0);
\draw[blue] (4, 4) -- ++ (0,.15) arc (-90:90:.1) -- ++ (0,.05) arc (-90:90:.1) -- ++ (0,.05) arc (-90:90:.1) -- ++ (0,.15);
\draw[blue] (4, 4) -- ++ (.2,0) -- ++ (.1,.15) -- ++ (.1,-.15) -- ++ (.1,.15) -- ++ (.1,-.15) -- ++ (.1,.15) -- ++ (.1,-.15) -- ++ (.2,0);
\node[yshift=2.5mm] at (1,5) {\Huge\ScissorRightBrokenBottom};
\end{tikzpicture}
\end{center}
    \caption{A $5\times 5$ pattern with Jeandel-Rao tiles ready to laser cut.
    Tiles should have 3cm size when printed in A4 format.}
    \label{fig:5x5-JR-pattern}
\end{figure}

\begin{figure}
\begin{center}
\def\p{1.61803398874989}   
\def\phiPLUStrois{4.61803398874989}

\newlength\myunit 
\setlength{\myunit}{1cm} 



\def\JeandelRaoO{0}
\def\JeandelRaoI{1}
\def\JeandelRaoII{2}
\def\JeandelRaoIII{3}
\def\JeandelRaoIV{4}
\def\JeandelRaoV{5}
\def\JeandelRaoVI{6}
\def\JeandelRaoVII{7}
\def\JeandelRaoVIII{8}
\def\JeandelRaoIX{9}
\def\JeandelRaoX{10}
\def\JeandelRaoXI{11}

\newcommand\thepartition{
    \draw[dashed,fill=green!80] (0,0) -- (\p,0) -- (\p,1) -- (1,0) -- (1,1) --
    (\p-1,0) -- (\p-1,1) -- cycle;
    \draw[dashed,fill=purple!50] (0,0) -- (0,1) -- (\p,1) -- (1,0) -- (1,1) --
    (\p-1,0) -- (\p-1,1) -- cycle;
    \draw[dashed,fill=magenta!50] (0,2) -- (\p-1,\p+2) -- (0,\p+1) -- cycle;
    \draw[dashed,fill=black!30] (1,1) -- (\p,2) -- (\p,\p+2) -- (1,2) -- cycle;
    \draw[dashed,fill=cyan!80] (0,2) -- (1,\p+2) -- (1,\p+3) -- cycle;
    \draw[dashed,fill=yellow!50] (0,\p+1) -- (\p-1,\p+2) -- (1,\p+3) --
    (2-\p,\p+2) -- (2-\p,\p+3) -- (0,\p+2) -- cycle;
    \draw[dashed,fill=red!70] (0,3+\p) -- (0,2+\p) -- (2-\p,3+\p) --
    (2-\p,2+\p) -- (1,3+\p)-- (1,2+\p) -- (\p, 3+\p) -- cycle;
    \draw[dashed,fill=orange!50] (0,1) -- (1,\p+2) -- (0,2) -- cycle;
    \draw[dashed,fill=orange!50] (1,1+\p) -- (1,2+\p) -- (\p,3+\p) -- (\p,2+\p)
    -- cycle;
    \draw[dashed,fill=blue!70] (\p-1,1) -- (\p,\p+2) -- (\p-1,2) -- cycle;
    \draw[dashed,fill=pink!50] (0,1) -- (\p,1) -- (\p,2) -- (1,1) -- (1,2) --
    (\p-1,1) -- (\p-1,2) -- cycle;
    \draw[dashed,fill=green!40] (0,1) -- (1,\p+1) -- (1,\p+2) -- cycle;


\node at (0.16,4.44) {\JeandelRaoVI};
\node at (0.64,4.44) {\JeandelRaoVI};
\node at (1.22,4.44) {\JeandelRaoVI};
\node at (1.29,3.62) {\JeandelRaoVII};
\node at (0.24,3.43) {\JeandelRaoV};
\node at (0.77,3.62) {\JeandelRaoIV};
\node at (0.15,2.65) {\JeandelRaoII};
\node at (0.81,2.65) {\JeandelRaoX};
\node at (0.82,2.00) {\JeandelRaoVIII};
\node at (0.18,2.00) {\JeandelRaoVII};
\node at (1.30,2.00) {\JeandelRaoIII};
\node at (0.40,1.20) {\JeandelRaoIX};
\node at (0.84,1.20) {\JeandelRaoIX};
\node at (1.42,1.20) {\JeandelRaoIX};
\node at (0.23,0.78) {\JeandelRaoI};
\node at (0.77,0.78) {\JeandelRaoI};
\node at (1.26,0.78) {\JeandelRaoI};
\node at (0.40,0.20) {\JeandelRaoO};
\node at (0.84,0.20) {\JeandelRaoO};
\node at (1.42,0.20) {\JeandelRaoO};
} 

\def\verticalaxis{
    \draw[-latex] (0,0) -- (0,3+\p+.15);
    \foreach \x/\y in
    {0/0, 1/1, 2/2,
        \p+1  /\varphi+1,
        \p+2  /\varphi+2,
        \p+3  /\varphi+3}
    \draw (.00,\x) -- (-.05,\x) node[left] {$\y$};
}

\def\horizontalaxis{
\draw[-latex] (-.1,0) -- (\p+0.2,0);
\foreach \x/\y in
{0/0, 1/1,
    2-\p  /\frac{1}{\varphi^{2}},
    \p-1  /\frac{1}{\varphi},
    \p    /\varphi}
\draw (\x,0+.05) -- (\x,0-.05) node[below] {$\y$};
}



\begin{tikzpicture}[scale=3]
\tikzstyle{every node}=[font=\bfseries\LARGE]



    \def\MINX{-3}
    \def\MAXX{2}
    \def\MINY{-1}
    \def\MAXY{1}
    \clip (-2.5,-1.5) rectangle (3,6);


    \foreach \x in {\MINX,...,\MAXX}
    \foreach \y in {\MINY,...,\MAXY}
    {
        \begin{scope}[xshift=\p*\x*\myunit+\y*\myunit, 
                      yshift=\phiPLUStrois*\y*\myunit]
        \thepartition
        \end{scope}
    }

    \draw[ultra thick,double] (0,0) rectangle (\p,\p+3);

    \foreach \x in {\MINX,...,\MAXX}
    \foreach \y in {\MINY,...,\MAXY}
    {
        \begin{scope}[xshift=\p*\x*\myunit+\y*\myunit, 
                      yshift=\phiPLUStrois*\y*\myunit]
        \node[circle,draw=black, fill=black, inner sep=0pt,
                minimum size=10pt] at (0,0) {};
        \end{scope}
    }


\end{tikzpicture}
\end{center}
    \caption{The Universal solver for Jeandel-Rao Wang shift.
    Any pattern in the minimal subshift of Jeandel-Rao Wang shift
    is the coding of the orbit of some starting point 
    by the action of horizontal and
    vertical translations by $1$ unit (3cm when printed in A4 format).}
    \label{fig:universal-JR-tiling-solver}
\end{figure}


\end{document}